\newtheorem{theorem}{Theorem}[section]
\newtheorem{corollary}[theorem]{Corollary}
\newtheorem{lemma}[theorem]{Lemma}
\newtheorem{proposition}[theorem]{Proposition}
\theoremstyle{definition}
\newtheorem{example}[theorem]{Example}
\theoremstyle{remark}
\newtheorem{remark}[theorem]{Remark}
\definecolor{dp}{RGB}{34,139,34}
\definecolor{kb}{RGB}{0,155,255}
\definecolor{tj}{RGB}{255,0,255}
\definecolor{pk}{rgb}{0,0,0.6}
\numberwithin{equation}{section}
\numberwithin{theorem}{section}
\numberwithin{figure}{section}
\newcommand{\ud}{\,{\rm d}}
\newcommand{\rf}[1]{\eqref{#1}}
\newcommand{\bbfR}{{\mathbb R}}
\newcommand{\bbfN}{{\mathbb N}}
\newcommand{\vf}{{\varphi}}
\newcommand{\ve}{{\varepsilon }}
\newcommand{\se}{{\text{\rm{e}}}}
\newcommand{\Aa}{{\mathcal A}}
\newcommand{\R}{{\mathbb R}}
\date{\rule{0pt}{15pt}\today}
\newcommand{\Rd}{{{\mathbb R}^d}}
\newcommand{\Rdz}{{{\mathbb R}^d_0}}
\newcommand{\RR}{\mathbb{R}}
\newcommand{\NN}{\mathbb{N}}
\newcommand{\al}{{\alpha}}
\newcommand{\indyk}{\mathbf{1}}
\def \eps{\varepsilon}
\def \tp{\tilde{p}}
\def \tP{\tilde{P}}
\title[Self-similar solution]{Self-similar solution for 
Hardy 
operator}
\author[K. Bogdan]{Krzysztof Bogdan}
\address{Faculty of Pure and Applied Mathematics,
Wroc\l aw University of Science and Technology,
Wyb. Wyspia\'nskiego 27, 50-370 Wroc\l aw, Poland}
\email{krzysztof.bogdan@pwr.edu.pl}
\author[T. Jakubowski]{Tomasz Jakubowski}
\address{Faculty of Pure and Applied Mathematics,
Wroc\l aw University of Science and Technology,
Wyb. Wyspia\'nskiego 27, 50-370 Wroc\l aw, Poland}
\email{tomasz.jakubowski@pwr.edu.pl}
\author[P. Kim]{Panki Kim}
\address{Department of Mathematical Sciences and Research Institute of Mathematics,
Seoul National University, Seoul 08826, Republic of Korea}
\email{pkim@snu.ac.kr}
\author[D. Pilarczyk]{Dominika Pilarczyk}
\address{Faculty of Pure and Applied Mathematics,
Wroc\l aw University of Science and Technology,
Wyb. Wyspia\'nskiego 27, 50-370 Wroc\l aw, Poland}
\email{dominika.pilarczyk@pwr.edu.pl}
\thanks{The first author was supported by grant 2017/27/B/ST1/01339 of National Science Centre, Poland.
The second author was supported by grant 2015/18/E/ST1/00239 of National Science Centre, Poland.
The third author was supported by the National Research Foundation of Korea (NRF), grant funded by the Korea government (MSIP) No. NRF-2021R1A4A1027378.}
\subjclass[2010]{Primary 47D08, 31C05; Secondary 60J35, 35R11}
\keywords{fractional Laplacian, Hardy potential, self-similar solution}
\begin{document}

\begin{abstract}
We describe the large-time asymptotics of solutions to the heat equation  for the fractional Laplacian with added subcritical or even critical Hardy-type potential. The asymptotics is governed by a self-similar solution of the equation, obtained as a normalized limit at the origin of the kernel of the corresponding Feynman-Kac semigroup.
\end{abstract}

\maketitle

\section{Introduction}\label{s.i}

\subsection{Main results and structure of the paper}\label{s:sg}
Let $d\in \bbfN:=\{1,2,\ldots\}$, $\alpha\in (0,2)$ and $\alpha<d$.
We 
consider the semigroup $\tilde P_t$, $t>0$, of the following Hardy operator 
on $\Rd$,
   \begin{equation}
   \label{eq:SchrOp}
   \Delta^{\alpha/2}+\kappa|x|^{-\alpha}.
   \end{equation} 
We call $\kappa$, and $\Delta^{\alpha/2}+\kappa|x|^{-\alpha}$, subcritical  if $\kappa<\kappa^*$, critical  if $\kappa=\kappa^*$ and supercritical  if $\kappa>\kappa^*$.
Here $\Delta^{\alpha/2}:=-(-\Delta)^{\alpha/2}$ is the fractional Laplacian,
\begin{equation*}
   \kappa^*:=\frac{2^{\alpha} \Gamma((d+\alpha)/4)^2 }{\Gamma((d-\alpha)/4)^{2}},
\end{equation*}
and $\Gamma(t)=\int_0^\infty y^{t-1}e^{-y}\ud y$ is the Gamma function.
It is well known that $\kappa^*$ is the best constant in the Hardy inequality for the quadratic form of $\Delta^{\alpha/2}$,
see Herbst \cite[Theorem~2.5]{MR0436854}, Beckner \cite[Theorem~2]{MR1254832} or Yafaev \cite[(1.1)]{MR1717839}; see also Frank and Seiringer \cite[Theorem~1.1]{MR2469027} and Bogdan, Dyda and Kim \cite[Proposition~5]{MR3460023}. 
Following \cite[Section~4]{MR3460023}, for $\beta\in [0,d-\alpha]$ we let
\begin{equation}\label{e.dkd}
\kappa_\beta=\frac{2^\alpha\Gamma((\beta+\alpha)/2) \Gamma((d-\beta)/2)}{\Gamma(\beta/2)\Gamma((d-\beta-\alpha)/2)},
\end{equation}
where $\kappa_0 =\kappa_{d-\alpha}= 0$, according to the convention $1/\Gamma(0)=0$.  The function $\beta \mapsto \kappa_\beta$ is increasing on $[0,(d-\alpha)/2]$, decreasing on $[(d-\alpha)/2,d-\alpha]$, and $\kappa_\beta = \kappa_{d-\alpha-\beta}$, see  \cite[Proof of Proposition~5]{MR3460023}.  The maximal or {\it critical} value of $\kappa_\beta$ is, therefore, $\kappa_{(d-\alpha)/2}=\kappa^*$, and
for each $\kappa\in [0,\kappa^*]$ there is  a unique number $\delta$ such that 
\begin{equation}\label{e.cdk}
0\le\delta\leq (d-\alpha)/2 \quad \text{ and } \quad
\kappa=\kappa_\delta=\frac{2^\alpha\Gamma((\delta+\alpha)/2) \Gamma((d-\delta)/2)}{\Gamma(\delta/2)\Gamma((d-\delta-\alpha)/2)}.
\end{equation}
In what follows, $\delta$ and $\kappa$
shall satisfy \eqref{e.cdk}. We let
$h(x)=h_\delta(x):=|x|^{-\delta}$, $x\in \Rd$. 
By Bogdan, Grzywny, Jakubowski and Pilarczyk \cite{MR3933622},
the Schr\"odinger operator \eqref{eq:SchrOp} has heat kernel $\tilde{p}$ with singularity at the origin in $\Rd$ and sharp explicit estimates (given by
\eqref{eq:mainThmEstz} below).
The first main result 
of the present paper 
is a description of the limiting behavior of $\tilde{p}$, as follows.
\begin{theorem}\label{t:eta2}
The limit $\Psi_t(x):=
\lim\limits_{y \to 0} \frac{\tilde{p}(t,x,y)}{h(y)}$ exists whenever $0\le\delta\leq \frac{d-\alpha}{2}$, $t>0$, $x\in \Rd$. 
\end{theorem}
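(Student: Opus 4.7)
My approach combines the sharp two-sided estimates \eqref{eq:mainThmEstz} from \cite{MR3933622} with a compactness argument applied to the Duhamel (Feynman--Kac) integral equation for $\tilde p$. A key feature is that any limit is automatically self-similar, reflecting the $\alpha$-homogeneity of the operator.

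By homogeneity of $\Delta^{\alpha/2}$ and $|x|^{-\alpha}$, $\tilde p(t,x,y)=t^{-d/\alpha}\tilde p(1,x/t^{1/\alpha},y/t^{1/\alpha})$. Hence $q_y(t,x):=\tilde p(t,x,y)/h(y)$ satisfies $q_y(t,x)=t^{(\delta-d)/\alpha}q_{y/t^{1/\alpha}}(1,x/t^{1/\alpha})$, so it suffices to prove existence of $\lim_{y\to 0}q_y(1,x)$. When $\delta=0$ we have $\kappa=0$, $\tilde p=p$, and the claim is continuity of the free heat kernel at $y=0$. Assume henceforth $0<\delta\le (d-\alpha)/2$, so that $q_y(t,x)=|y|^\delta\tilde p(t,x,y)$.

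The upper bound in \eqref{eq:mainThmEstz} gives $q_y(t,x)\le\Phi(t,x)$ uniformly in $|y|\le 1$, with $\Phi$ locally bounded on $(0,\infty)\times\Rd$. Interior joint continuity of $\tilde p$ on $(0,\infty)\times\Rd\times(\Rd\setminus\{0\})$ (also a byproduct of \cite{MR3933622}) yields local equicontinuity of the family $\{q_y\}_{|y|\le 1}$. The Duhamel identity
\[
\tilde p(t,x,y)=p(t,x,y)+\kappa\!\int_0^t\!\!\int_{\Rd}p(s,x,z)\,|z|^{-\alpha}\,\tilde p(t-s,z,y)\,dz\,ds,
\]
divided by $h(y)$, becomes
\[
q_y(t,x)=|y|^\delta p(t,x,y)+\kappa\!\int_0^t\!\!\int_{\Rd}p(s,x,z)\,|z|^{-\alpha}\,q_y(t-s,z)\,dz\,ds.
\]
By Arzel\`a--Ascoli, any sequence $y_n\to 0$ admits a subsequence along which $q_{y_n}\to q$ locally uniformly. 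Dominated convergence (controlled by $\Phi$; the source $|y_n|^\delta p(t,x,y_n)\to 0$ since $\delta>0$) passes to the limit:
\[
q(t,x)=\kappa\!\int_0^t\!\!\int_{\Rd}p(s,x,z)\,|z|^{-\alpha}\,q(t-s,z)\,dz\,ds. \qquad (\star)
\]

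The main obstacle is uniqueness of $q$: equation $(\star)$ admits $q\equiv 0$ as a solution, so additional input is needed to identify the subsequential limit. The lower bound in \eqref{eq:mainThmEstz} forces $q>0$, ruling out the trivial solution. To show $q$ does not depend on the subsequence, I would combine the self-similarity $q(t,x)=t^{(\delta-d)/\alpha}q(1,x/t^{1/\alpha})$ (inherited by any limit) with a Gronwall-type argument on the difference of two candidate limits---closed by the finite dominant $\Phi$ and the local integrability of $p(s,x,z)|z|^{-\alpha}$---or alternatively exploit rotational invariance to reduce to radial approach $y=r\omega$, $r\downarrow 0$, where monotonicity and the leading-order asymptotics of \eqref{eq:mainThmEstz} near the origin pin down the limit explicitly. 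Once uniqueness is established, $\Psi_t(x):=q(t,x)$ is well defined and self-similar, proving the statement.
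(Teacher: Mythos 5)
Your reduction to $t=1$ via scaling, the use of \eqref{eq:mainThmEstz} for a uniform upper bound, and the idea of extracting subsequential limits and passing to the limit in the Duhamel identity are all reasonable, and the observation that the source term $|y|^\delta p(1,x,y)\to 0$ is correct. But there are two gaps, and the second is fatal in the form you sketch.

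First, Arzel\`a--Ascoli needs \emph{equicontinuity} of $\{q_y(1,\cdot)\}_{|y|\le 1}$ on compacta in $x$, uniformly as $y\to 0$. Joint continuity of $\tilde p$ away from the origin gives continuity of each $q_y$ but not a modulus of continuity uniform in $y$; you would need to extract this from the Duhamel representation (via the $x$-regularity of $p$) and justify it. The paper explicitly remarks that capturing continuity of $\rho_1$ at the origin ``turned out difficult \dots\ directly or indirectly,'' so this is not a formality.

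Second and more seriously, uniqueness of the subsequential limit cannot follow from $(\star)$ by a Gronwall argument, because $(\star)$ is precisely the Duhamel identity that the nonzero self-similar profile itself satisfies: in the paper this is Lemma~\ref{lem:PsiDuh}, $\Psi_t(x)=\int_0^t\int \Psi_r(z)q(z)p(t-r,x,z)\ud z\ud r$. Hence the homogeneous fixed-point equation $(\star)$ has a nontrivial (in fact, a one-parameter family of scalar multiples of) solutions, so the difference $r=q_1-q_2$ of two candidate limits also satisfies it and need not vanish. The kernel $\int_0^t\int p(s,x,z)|z|^{-\alpha}(\cdot)\ud z\ud s$ is moreover singular and unbounded on natural function spaces, so even the iteration scheme underlying a Gronwall estimate does not close. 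Your alternative suggestion---radial approach with ``monotonicity and leading-order asymptotics''---is also unsupported: \eqref{eq:mainThmEstz} is a two-sided comparison with unspecified constants and gives no asymptotics, and no monotonicity in $|y|$ is established. Rotational invariance identifies limits along different rays \emph{provided} each limit exists and is rotation-consistent, but does not by itself produce existence or uniqueness.

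The paper's route is genuinely different: it Doob-conditions $\tilde p$ by $h$ to obtain the Markov transition density $\rho$, transfers the $y\to 0$ question into a $t\to\infty$ ergodicity question for an Ornstein--Uhlenbeck-type semigroup $L_t$ (see \eqref{e.dlt}), proves existence and uniqueness of a stationary density by a theorem of Komorowski on Markov operators, obtains asymptotic stability from the Kulik--Scheutzow coupling version of Doob's theorem (hence $L^1(h^2)$-convergence of $\rho_1(x,\cdot)\to\varphi$ as $x\to 0$ in Lemma~\ref{l.crho}), and only then upgrades to pointwise convergence by writing $\rho_1(x,y)$ via Chapman--Kolmogorov as an integral against $\rho_1(x,\cdot)$. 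This supplies exactly the uniqueness/identification mechanism that is missing in your outline. If you want a compactness-based proof, you would still need a normalization and an ergodic/uniqueness principle replacing the Markov-operator machinery; as written, the proposal does not close.
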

The proof of Theorem~\ref{t:eta2} is given in Section~\ref{s:csss}.
The function $\Psi_t(x)$ is a self-similar semigroup solution of the heat equation for the Hardy operator,
as we assert in \rf{e.s} and \rf{ss form eta1} below.
It has an important application to large-time asymptotics of the semigroup $\tilde P_t$, which we now present. To this end we consider Doob--conditioned and weighted 
$L^q$ spaces. Let
$$H=\max\{1,h\}.$$
As usual, $L^1=L^1(\Rd,\ud x)$, $L^1(H)=L^1(\Rd,H(x)\ud x)$, etc. We have $L^1(H)=\{f/H: f\in L^1\}=L^1(h) \cap L^1$.
We then define, for $1\le q <\infty$, 
\begin{equation}\label{normqh}
	\|f\|_{q,h }:=\|f/h\|_{L^q(h^2)}
=\bigg( \int_{\Rd} |f(x)|^q h^{2-q}(x)\ud x \bigg) ^{\frac{1}{q}}
=\|f\|_{L^q(h^{2-q})} ,
\end{equation}      
and, for $q=\infty$,
\begin{equation*}
	\|f\|_{\infty, h}:={\rm ess} \sup_{x\in \bbfR^d} |f(x)|/h(x).
	\end{equation*}
Of course, $\|f\|_{2,h}=\|f\|_2$ and $\|f\|_{1,h}=\|f\|_{L^1(h)}$. 
For $f\in L^1(H)$ we let
\begin{equation}\label{e.dtp}
\tilde P_t f(x):=\int_\Rd \tilde p(t,x,y)f(y) \ud y,\quad t>0,\; x\in \Rd\setminus \{0\}.
\end{equation}
Our second main result is the following large-time asymptotics for $\tilde P_t$.
\begin{theorem}\label{lim norm th}
If $f\in L^1(H)$, $A=\int_{\Rd}f (x)  h(x)   \ud x$, $u(t,x)=\tilde P_t f(x)$,
 and $q\in[1,\infty)$, then	
	\begin{equation}\label{e.as}
		\lim_{t\rightarrow \infty }t^{\frac{d-2\delta}{\alpha}(1-\frac{1}{q})}\| u(t, \cdot)-A\Psi_t\|_{q, h}=0.
	\end{equation}
\end{theorem}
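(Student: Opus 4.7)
The plan is to first establish the case $q=1$ directly from Theorem~\ref{t:eta2} and the self-similarity of $\tilde p$ and $\Psi_t$, then to boost it to arbitrary $q<\infty$ via a semigroup splitting. Since $A\Psi_t(x)=\int_\Rd h(y)\Psi_t(x)f(y)\ud y$, one has
\begin{equation*}
u(t,x)-A\Psi_t(x)=\int_\Rd\bigl[\tilde p(t,x,y)-h(y)\Psi_t(x)\bigr]f(y)\ud y,
\end{equation*}
so Fubini yields $\|u(t,\cdot)-A\Psi_t\|_{1,h}\le \int_\Rd |f(y)|J_t(y)\ud y$ with $J_t(y):=\int_\Rd h(x)\bigl|\tilde p(t,x,y)-h(y)\Psi_t(x)\bigr|\ud x$. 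The scaling $\tilde p(t,\lambda x,\lambda y)=\lambda^{-d}\tilde p(\lambda^{-\alpha}t,x,y)$ forces $\Psi_t(\lambda x)=\lambda^{\delta-d}\Psi_{\lambda^{-\alpha}t}(x)$ for the limit of Theorem~\ref{t:eta2}, and the change of variables $x=t^{1/\alpha}\bar x$, $y=t^{1/\alpha}\bar y$ collapses $J_t(y)$ to $h(y)\,I(yt^{-1/\alpha})$, where
\begin{equation*}
I(\bar y):=\int_\Rd h(\bar x)\bigl|\tilde p(1,\bar x,\bar y)/h(\bar y)-\Psi_1(\bar x)\bigr|\ud\bar x.
\end{equation*}

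The \emph{key lemma} is that $I(\bar y)\to 0$ as $\bar y\to 0$ together with a uniform bound $I(\bar y)\le C$ for small $|\bar y|$. To prove it, Theorem~\ref{t:eta2} gives pointwise convergence of $\tilde p(1,\bar x,\bar y)/h(\bar y)$ to $\Psi_1(\bar x)$. Since $h$ is formally a zero-eigenfunction of \eqref{eq:SchrOp} by the choice of $\delta$ in \eqref{e.cdk}, the intertwining $\tilde P_1h=h$ forces $\int h(\bar x)\tilde p(1,\bar x,\bar y)/h(\bar y)\ud\bar x=1$ for each $\bar y$, and a Fatou/exchange argument gives $\int h\,\Psi_1\,d\bar x=1$ as well. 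Because pointwise convergence combined with preservation of total mass against the positive measure $h\,d\bar x$ implies $L^1(h\,d\bar x)$ convergence (Scheff\'e's lemma), we conclude $I(\bar y)\to 0$; the required uniform majorant for $\bar y$ near $0$ is extracted from the sharp estimate \eqref{eq:mainThmEstz} of \cite{MR3933622}. Granting the lemma, $I(yt^{-1/\alpha})\to 0$ pointwise in $y\ne 0$ and is eventually dominated by the constant $C$, so dominated convergence with majorant $C|f(y)|h(y)\in L^1$ (available since $f\in L^1(h)$) settles the $q=1$ case.

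For $q\in(1,\infty)$ I would use semigroup smoothing. Chapman--Kolmogorov applied to $\tilde p$, followed by dividing by $h(z)$ and letting $z\to 0$, yields the intertwining $\tilde P_s\Psi_{t-s}=\Psi_t$ for $0<s<t$, hence
\begin{equation*}
u(t,\cdot)-A\Psi_t=\tilde P_{t/2}\bigl[\tilde P_{t/2}f-A\Psi_{t/2}\bigr].
\end{equation*}
Combined with a weighted ultracontractivity estimate of the form $\|\tilde P_s g\|_{q,h}\le Cs^{-\frac{d-2\delta}{\alpha}(1-\frac{1}{q})}\|g\|_{1,h}$, which can be read off from \eqref{eq:mainThmEstz}, this gives
\begin{equation*}
t^{\frac{d-2\delta}{\alpha}(1-\frac{1}{q})}\|u(t,\cdot)-A\Psi_t\|_{q,h}\le C\|\tilde P_{t/2}f-A\Psi_{t/2}\|_{1,h}\longrightarrow 0
\end{equation*}
by the already established $q=1$ case. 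The main obstacle is the key lemma: extracting from \eqref{eq:mainThmEstz} a $\bar y$-uniform majorant of $\tilde p(1,\cdot,\bar y)/h(\bar y)$ that is integrable against $h\,dx$, so that the Scheff\'e passage is legitimate. Establishing the weighted ultracontractivity used to upgrade $q=1$ to general $q$ is comparatively routine but must also be derived carefully from the sharp estimates in \cite{MR3933622}.
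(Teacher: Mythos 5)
Your $q=1$ argument is sound and essentially mirrors the paper's (Steps 2 and 4 of the proof of Lemma~\ref{lim 0}): your quantity $I(\bar y)$ is Lemma~\ref{l.crho} in disguise, and the uniform majorant you worry about is simply $I(\bar y)\le 2$, read off from the mass-preservation identities \eqref{e:rhois1} and \eqref{est_t}, so the Scheff\'e passage is fine. For $q\in(1,\infty)$, however, the proposal has a genuine gap. The weighted ultracontractivity
\begin{equation*}
\|\tilde P_s g\|_{q,h}\le C\,s^{-\frac{d-2\delta}{\alpha}\left(1-\frac1q\right)}\|g\|_{1,h}
\end{equation*}
is \emph{false} whenever $\delta>0$ and $q>1$. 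By scaling it is equivalent to $\sup_{y}\int_{\Rd}\rho_1(x,y)^q h^2(x)\ud x<\infty$, but the sharp estimate \eqref{eq:mainThmEstrho} gives $\int_{\Rd}\rho_1(x,y)^q h^2(x)\ud x\approx|y|^{2\delta(q-1)}\to\infty$ as $|y|\to\infty$; equivalently, $g=h\,\mathbf{1}_{B(z,1)}$ with $|z|\to\infty$ makes the ratio $\|\tilde P_1 g\|_{q,h}/\|g\|_{1,h}$ blow up. This is precisely why Theorem~\ref{hyperc} carries the unavoidable extra term $Ct^{-\frac{d-2\delta}{\alpha}(1-\frac1q)-\frac{\delta}{q\alpha}}\|f\|_{L^1}$, and the paper notes after its proof that the one-term form ``is incompatible with the scaling.'' Moreover, even with the correct two-term bound your $t/2$--$t/2$ splitting does not close: $\|\Psi_{t/2}\|_{L^1}\approx t^{\delta/\alpha}$ (and similarly $\|\tilde P_{t/2}f\|_{L^1}$ grows), so the correction $t^{-\delta/(q\alpha)}\|\tilde P_{t/2}f-A\Psi_{t/2}\|_{L^1}$ need not vanish.

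The paper sidesteps both problems by freezing the inner time at $1$: it writes $u(t+1,\cdot)-A\Psi_{t+1}=\tilde P_t(\tilde P_1 f-A\Psi_1)$, where the inner function is a \emph{fixed} $h$-mean-zero element of $L^1(H)$ (Remark~\ref{R:new1}), and applies Lemma~\ref{lim 0}. That lemma first proves \eqref{lim_0_cond} for compactly supported mean-zero data by direct $\rho$-estimates (the $q=1$ and $q=\infty$ cases, interpolated), and then extends to general data by approximating with such $\psi_R$; in the last step the two-term Theorem~\ref{hyperc} is applied to the small remainder $f-\psi_R\in L^1(H)$, and the troublesome $\|f-\psi_R\|_{L^1}$ contribution is killed by the vanishing factor $t^{-\delta/(q\alpha)}$. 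Some version of this approximation-by-compactly-supported-data step is needed; the single-inequality shortcut you invoke is not available.
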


The structure of the paper is as follows. The proof of  Theorem~\ref{lim norm th} is given at the end of Section~\ref{s.ssa}, where we also show that the result is optimal. In Section~\ref{s:csss} we state and prove Theorem \ref{t:eta}, of which Theorem \ref{t:eta2} is a direct consequence.
In Section~\ref{s.fatp} we discuss the Feynman-Kac semigroup $\tilde{P}_t$ from the point of view of functional analysis, in particular we prove hypercontractivity of the semigroup in Theorem~\ref{hyperc}, which is then used in Section~\ref{s.ssa}. The last main result of the paper,
Theorem~\ref{thm:intu} in Section~\ref{s.psss}, gives an explicit formula for the potential $\int_0^\infty  \Psi_t(x) \ud t$ of the self-similar solution.
Notably, Theorem~\ref{thm:intu} and Corollary~\ref{cor:intPsi} further the integral analysis which is the foundation of \cite{MR3933622}. They were inspired by one of our earlier attempts to prove Theorem~\ref{t:eta2} and are particularly interesting for $\kappa=\kappa^*$, see \eqref{e.pcc}.

\subsection{Motivation and methods}
\label{s:Mm}
The classical result of Baras and Goldstein \cite{MR742415} asserts the
existence of nontrivial nonnegative solutions of the heat equation $
\partial_t
 = \Delta + \kappa|x|^{-2}$ in $\Rd$ for (subcritical) $\kappa \in [0, (d - 2)^2/4]$, and non-existence of such solutions for (supercritical) $\kappa>(d-2)^2/4$. 
Later on, the upper and lower bounds for the heat kernel of the subcritical Hardy operator $\Delta + \kappa|x|^{-2}$ were obtained by Liskevich and Sobol \cite{MR1953267}, Milman and Semenov \cite{MR2064932,MR2180080}, Moschini and Tesei \cite{MR2328115}, Filippas, Moschini and Tertikas \cite{MR2308757}.

The classical Hardy operator $\Delta+\kappa|x|^{-2}$ plays a distinctive role in limiting and self-similar phenomena in
probability \cite{2017-LM-EP} and partial differential equations \cite{MR3020137}. This is related to the scaling of the corresponding heat kernel, which is the same as for the Gauss-Weierstrass kernel, and to the asymptotics at the origin in $\Rd$, which is very different. Such applications motivate our work on the Hardy perturbation of the fractional Laplacian. In fact the paper \cite{MR3933622} was a preparation for the present work, which now comes to fruition.

The strategy of the proof of Theorem~\ref{t:eta2} is to 
prove and use the existence of a stationary density of a corresponding Ornstein-Uhlenbeck semigroup. Then the large-time asymptotics of the Ornstein-Uhlenbeck semigroup yields the asymptotics of $\tilde p$ at the origin. To the best of our knowledge the approach is new
 and should apply to other heat kernels with scaling. 

Let us also comment on Theorem~\ref{lim norm th}. We start by recalling the initial value problem for the classical heat equation,
\begin{equation}\label{c.h.eq}
	\begin{cases}
		\partial_t u(x,t)=\Delta u(x,t) , \quad x\in \bbfR^d, \quad t>0 ,\\
		u(x,0)=f(x) .
	\end{cases}
\end{equation}  
For 
$f \in L^1 (\bbfR^d )$ the following is asymptotics is well-known:
\begin{equation}\label{heat lim}
	\lim_{t\rightarrow \infty }t^{\frac{d}{2}(1-\frac{1}{p})}\| u(t,\cdot)-Mg_t\|_{L^p(\bbfR^d)} =0,
\end{equation}
see, e.g., Giga, Giga and Saal \cite[Theorem in Sect. 1.1.4]{Giga} or Duoandikoetxea and Zuazua \cite{DZ}). Here  $p\in [1, \infty ]$, $M:=\int_{\bbfR^d}f(x)  \ud x $, $u(t,x) =  g_t \ast f(x)$
is the semigroup solution of \rf{c.h.eq}, and $g_t(x)=\big(4\pi t\big)^{-d/2}\exp( -|x|^2/(4 t))$ is the Gauss-Weierstrass kernel. Of course, we have \textit{scaling}: $g_t(x)=t^{-d/2}g_1(t^{-1/2}x)$, that is, the function is \textit{self-similar}. The function also satisfies the first equation in \eqref{c.h.eq}. We can consider \eqref{heat lim} as a statement about the universality of the self-similar solution $g_t(x)$ for the large-time behavior of all solutions to \eqref{c.h.eq}.

Theorem~\ref{lim norm th}
gives an analogous result for $u(t,x)=\tilde P_t f(x)$ and the initial value problem
\begin{equation}\label{equation}
	\begin{cases}
		\partial_t u(x,t)=\left(\Delta^{\alpha/2} +\kappa  |x|^{-\alpha}\right) u(x,t) , \quad x\in \bbfR^d, \quad t>0 ,\\
		u(x,0)=f(x) .
	\end{cases}
\end{equation}  
for $\kappa$ satisfying \eqref{e.cdk} and 
$f\in L^1(H)$. 
The use of $h$ is novel in this setting -- this
and connections to the classical literature will be discussed in more detail in Section~\ref{s.ssa}.

\subsection{General conventions} \label{s:n}
We tend to use ``:='' to indicate definitions, e.g., $a\land b := \min\{a, b\}$ and $a\vee b := \max\{a, b\}$, $a_+:=a \vee 0$ and $a_-:=(-a)\vee 0$. Throughout, we only consider  Borel measurable  functions and Borel measures. 
As usual, integrals are considered well-defined when the integrands are nonnegative or absolutely integrable with respect to a given measure. In the case of integral kernels, the corresponding integrals
should at least be well-defined pointwise almost everywhere ($a.e.$).
For $x\in \Rd$ and $r>0$ we define $B(x,r)=\{y\in \Rd: |y-x|<r\}$, the ball with center at $x$ and radius $r$.
We write $f\approx g$, which we call \textit{approximation} or \textit{comparison}, and say $f$ and $g$ are \textit{comparable}, if $f,g$ are nonnegative functions, $c^{-1}g\le f\le cg$ with some constant $c$, that is a number in $(0,\infty)$. The values of constants may change without notice from line to line in a chain of estimates. Of course, we shall also use constants in inequalities (one-sided comparisons of functions), e.g., $f\le cg$. We occasionally write $c=c(a,\ldots,z)$ to assert that the constant $c$ may be so selected as to depend only on $a,\ldots,z$.
As usual, for $1\le p\le \infty$, $L^p:=L^p(\Rd, \ud x)$, with norm $\|\cdot \|_p$, and 
$L^p(g):=L^p(\Rd, g \ud x)$  with  norm $\| f \|_{L^p(g)}$ and nonnegative (weight) function $g$.
\subsection*{Acknowledgements}
We thank Tomasz Grzywny, Piotr Knosalla, Tomasz Komorowski, Alex Kulik, Markus Kunze, \L{}ukasz Le\.zaj, Grzegorz Serafin, \L{}ukasz Stettner, 
and Tomasz Szarek
for helpful discussions, comments and references. 
We particularly thank Tomasz Komorowski for much advice on the proof of Theorem~\ref{c.gsall} and Alex Kulik for discussions on \eqref{e.convL}.
\section{Preliminaries}
\subsection{Fractional Laplacian}
Let 
$$
\nu(y)
=\frac{ \alpha 2^{\alpha-1}\Gamma\big((d+\alpha)/2\big)}{\pi^{d/2}\Gamma(1-\alpha/2)}|y|^{-d-\alpha}\,,\quad y\in \Rd\, .
$$
The 
coefficient  is so chosen 
that 
\begin{equation}
  \label{eq:trf}
  \int_{\Rd} \left[1-\cos(\xi\cdot y)\right]\nu(y)\ud y=|\xi|^\alpha\,,\quad
  \xi\in \Rd\,,
\end{equation}
see, e.g., Bogdan, Byczkowski, Kulczycki, Ryznar, Song, and Vondra{\v{c}}ek \cite[(1.28)]{MR2569321}. The fractional Laplacian for (smooth compactly supported) \textit{test functions} $\varphi\in C^\infty_c(\Rd)$ is
\begin{equation*}
  \Delta^{\alpha/2}\varphi(x) = 
  \lim_{\varepsilon \downarrow 0}\int_{|y|>\varepsilon}
  \left[\varphi(x+y)-\varphi(x)\right]\nu(y)\ud y\,,
  \quad
  x\in \Rd\,.
\end{equation*}
Many authors use the notation $-(-\Delta)^{\alpha/2}$ for the operator. In terms of the Fourier transform, 
$\widehat{\Delta^{\alpha/2}\varphi}(\xi)=-|\xi|^{\alpha}
\widehat{\varphi}(\xi)$, see, e.g., \cite[Section~1.1.2]{MR2569321} or \cite{MR3613319}.

\subsection{The semigroup of $\Delta^{\alpha/2}$}
We consider the convolution semigroup of functions
\begin{equation}
  \label{eq:dpt}
  p_t(x):=(2\pi)^{-d}\int_ \Rd e^{-t|\xi|^\alpha}e^{-ix\cdot\xi}\ud \xi\,,\quad\ t>0,\ x\in
  \Rd\,.
\end{equation}
According to 
(\ref{eq:trf}) 
and the L\'evy-Khinchine formula,
each $p_t$ is a radial probability density function and $\nu(y)\ud y$ is the L\'evy measure of the semigroup, 
see, e.g., 
\cite{MR2569321}.
From (\ref{eq:dpt}) we have 
\begin{equation}
  \label{eq:sca}
  p_t(x)=t^{-d/\alpha}p_1(t^{-{1/\alpha}}x)\,.
\end{equation}
It is well-known that 
$p_1(x)
\approx1\land |x|^{-d-\alpha}$ (see   \cite{MR2569321}, Bogdan, Grzywny and Ryznar \cite[remarks after Theorem~21]{MR3165234} or \cite{MR3613319}), so
\begin{equation}\label{eq:oppt}
p_t(x)
\approx t^{-d/\alpha}\land \frac{t}{|x|^{d+\alpha}}
\,,\quad t>0,\ \ x\in \Rd\,.
\end{equation}
Since $\alpha<d$, we get (the Riesz kernel) 
\begin{align}\label{eq:pot}
	\int_0^\infty p_t(x)\ud{t} = \Aa_{d,\alpha} |x|^{\alpha-d}, \quad x\in \Rd,
\end{align}
where
\begin{equation}\label{e.dAda}
\Aa_{d,\alpha} = \frac{\Gamma(\frac{d-\alpha}{2})}{\Gamma(\frac{\alpha}{2})2^\alpha \pi^{d/2}},
\end{equation}
see, e.g., \cite[Section 1.1.2]{MR2569321}.
We denote 
$$
p(t,x,y)=p_t(y-x), \quad t>0,\ x,y\in \Rd.
$$
Clearly, $p$ is symmetric:
\begin{equation*}\label{pt}
   p(t,x, y) = p(t,y, x), \quad t>0,\ x,y\in \Rd,
\end{equation*}   
and satisfies the Chapman-Kolmogorov equations:
\begin{equation}\label{eq:ChKforp}
     \int_{\Rd} p(s,x, y)p(t,y, z)\ud y = p(t+s,x, z), \quad x, z \in  \Rd,\, s, t > 0.
\end{equation}     
We denote, as usual, $P_t g(x) = \int_{\Rd} p(t,x, y) g(y)\ud y$.
The fractional Laplacian extends to the generator of the semigroup $\{P_t\}_{t>0}$ on many Banach spaces, see, e.g., \cite{MR3613319}.

\subsection{Schr\"odinger perturbation by Hardy potential}
We recall 
elements of the integral analysis of \cite{MR3460023} and \cite{MR3933622}, which was used to handle the heat kernel  $\tilde p$ of 
$\Delta^{\alpha/2}+\kappa |x|^{-\alpha}$. Thus, 
$$f_\beta(t) := c_\beta t_+^{(d-\alpha -\beta)/\alpha },\quad t\in \R,$$ 
for $\beta \in (0, d)$. The constant $c_\beta$ is so chosen that 
\begin{equation}\label{h_beta}
   h_\beta(x) := \int_0^\infty f_\beta(t) p_t(x) {\rm d}t =|x|^{-\beta}, \quad x\in \Rd.
\end{equation}
The existence of such $c_\beta\in (0, \infty)$ follows from \eqref{eq:sca} and the estimate $p_1(x)
\approx1\land |x|^{-d-\alpha}$. Of course, $f_\beta'(t)=c_\beta\frac{d-\alpha-\beta}{\al}t^{(d-2\alpha -\beta)/\alpha }$. Accordingly, for $\beta \in (0, d-\alpha)$ we may define
\begin{equation*}\label{def_q}
     q_\beta(x) = \frac{1}{h_\beta(x)}\int_0^{\infty }f_\beta'(t) p_t(x) {\rm d}t,\quad x\in \R_0^d,
\end{equation*}
where $\R_0^d :=\Rd\setminus \{0\}$. By \cite[(26)]{MR3460023}\footnote{The exponent $(d-\alpha -\beta)/\alpha$ in the definition of $f$ is denoted $\beta$ in \cite[Corollary~6]{MR3460023}.},
\begin{equation*}\label{q_beta}
  q_\beta(x) = \kappa_\beta |x|^{-\alpha },
\end{equation*}
where $\kappa_\beta$ is defined by \eqref{e.dkd}.
In what follows we keep our notation from \eqref{e.cdk}, that is, we let \begin{equation}\label{eq:dk0}
\delta \in [0,(d-\alpha)/2], \quad 	
\kappa = \kappa_\delta,\quad
h(x)=h_\delta(x)=|x|^{-\delta},\quad 
q(x) = q_\delta(x)=\kappa |x|^{-\alpha}. 
\end{equation}
To wit, the case of $\delta \in (0,(d-\alpha)/2]$ is covered by the discussion of $\beta$ above, and $\delta=0$ yields the trivial $\kappa=0$, $q=0$ and $h=1$.
We then define the Schr\"odinger perturbation of $p$ by $q$:
\begin{equation}\label{def_p_tilde}
   \tilde p =\tilde p_\delta= \sum_{n=0}^{\infty }p_n.
\end{equation}
Here for $t>0$ and $x,y\in \Rd$ we let $p_0(t,x, y) = p(t,x, y)$ and then proceed by induction:
\begin{align}\label{pn}
     p_n(t,x, y) &= \int_0^t \int_{\Rd} p(s,x, z) q(z)p_{n-1}(t-s, z, y)   \ud z    \ud s \\
     &=\int_0^t \int_{\Rd} p_{n-1}(s,x, z) q(z)p(t-s, z, y)   \ud z    \ud s 
     , \quad n \geq 1.\notag
\end{align}    
Of course, $\tp_0=p$. 
By \eqref{eq:oppt}, for $t>0$ and $y\in \Rd$ we have
\begin{align*}
p_1(t,0,y)&=\int_0^t \int_{\Rd} p(s,0, z) q(z)p(t-s, z, y)   \ud z    \ud s \\
&\ge c_{t,y}\int_0^{t/2} \int_{|z|<s^{1/\al}} s^{-d/\al} |z|^{-\al}  \ud z    \ud s =\infty.
\end{align*}
By symmetry, $p_1(t,x,0)=\infty$, too, for all $x\in \Rd$ and $t>0$, therefore $\tp(t,x,y)=\infty$ if $x=0$ or $y=0$.
By \cite[Theorem 1.1]{MR3933622}, the above discussion of $x=0$ and $y=0$ and the usual notational conventions we have for all $x,y\in \Rd$, $t>0$,
\begin{equation}
\label{eq:mainThmEstz}
\tilde{p}(t,x,y)\approx \left( t^{-d/\alpha}\wedge \frac{t}{|x-y|^{d+\alpha}} \right)\left(1+t^{\delta/\alpha}|x|^{-\delta}  \right)\left(1+t^{\delta/\alpha}|y|^{-\delta}  \right).
\end{equation}
Clearly, if $0<t_1<t_2<\infty$, then
\begin{equation}\label{e.crt1t2p}
\tilde p(s,x,y)\approx \tilde p(t, x,y), \quad x,y\in \Rd, \quad t_1\le s,t\le t_2
\end{equation}
(the  comparability 
constant does depend on $t_1$, $t_2$ and $d$, $\alpha$, $\delta$).
Recall that
\begin{equation}\label{e.dH}
	H(x)=1\vee h(x)=1\vee |x|^{-\delta}\approx 1+|x|^{-\delta},\quad x\in \Rd.
\end{equation}
Thus, we can reformulate \eqref{eq:mainThmEstz} as follows,
\begin{equation}
	\label{eq:mainThmEstz2}
	\tilde{p}(t,x,y)\approx p(t,x,y)\ H(t^{-1/\alpha}x)\  H(t^{-1/\alpha}y), \qquad t>0,\quad x,y\in \Rd.
\end{equation}
By \cite{MR3933622} and the above conventions,
 $\tilde p$ is a symmetric time-homogeneous transition density on $\Rd$, in particular the Chapman-Kolmogorov equations hold:
\begin{align}\label{eq:ChK}
\int_\Rd \tp(s,x,z) \tp(t,z,y) \ud   z  = \tp(t+s,x,y)\,,\quad {x,y\in \Rd, \ s,t>0.}
\end{align} 
The following Duhamel formulae hold for $p$ and $\tilde p$,
\begin{align}\label{eq:Df1}
\tp(t,x,y) &= p(t,x,y) + \int_0^t \int_{\bbfR^d} p(s,x,z) q(z) \tp(t-s,z,y) \ud   z   \ud s \\
&= p(t,x,y) + \int_0^t \int_{\bbfR^d} \tp(s,x,z) q(z) p(t-s,z,y)  \ud z   \ud s ,\quad t>0,\ x,y\in \bbfR^d. \label{eq:Df2}
\end{align}
In passing we refer to Bogdan, Hansen and Jakubowski \cite{MR2457489} and Bogdan, Jakubowski and Sydor \cite{MR3000465} for a general setting of Schr\"odinger perturbations of transition semigroups and other families of integral kernels. 

The function $\tilde p$ is self-similar, i.e., has the following scaling \cite[Lemma~2.2]{MR3933622}:
\begin{equation}\label{ss form}
  \tilde p(t,x, y)=t^{-d/\alpha } \tilde p\big(1, t^{-1/\alpha }x , t^{-1/\alpha }y \big), \qquad  t>0,  \ x,y \in \bbfR^d.
\end{equation}
This is the same scaling as for $p$.
Furthermore, if $T$ is a (linear) isometry of $\Rd$, then for all $t>0$, $x,y \in \Rd$ we have $p(t,Tx,Ty)=p_t(T(y-x))=p_t(y-x)=p(t,x,y)$, because  $p_t$ is radial. Of course, $q$ is radial, too.
By the change of variables $z=Tv$ and induction, 
\begin{align*}
     &p_n(t,Tx, Ty) = \int_0^t \int_{\Rd} p(s,Tx, z) q(z)p_{n-1}(t-s, z, Ty)   \ud z    \ud s \\
       &= \int_0^t \int_{\Rd} p(s,Tx, Tv) q(Tv)p_{n-1}(t-s, Tv, Ty)   \ud v   \ud s 
   \\    &=\int_0^t \int_{\Rd} p(s,x, v) q(v)p_{n-1}(t-s, v, y)   \ud v   \ud s =
       p_n(t,x, y),
     \quad n \geq 1.
\end{align*}
Therefore,
\begin{equation}\label{isom}
  \tilde p(t,Tx, Ty)=\tp (t,x,y),
\qquad  t>0,  \ x,y \in \bbfR^d.
\end{equation}

\subsection{Doob's conditioning}
Recall that $\delta \in [0,(d-\alpha)/2]$ and $\kappa = \kappa_\delta$, $h(x)=h_\delta (x)=|x|^{-\delta}$, $\tilde p$ depends on $\delta$. 
By \cite[Theorem~3.1]{MR3933622} and the preceding discussions, the function $h$ is \textit{invariant} in the following sense:
\begin{align}\label{eq:2}
	\int_{\bbfR^d} \tp(t,x,y)  h(y)
\ud y = h(x),
\qquad t>0,\; x \in \Rd\,.
\end{align}
We define the following Doob-conditioned (renormalized) kernel
\begin{equation}\label{e:drhot}
\rho_t(x,y)=\frac{\tilde p(t,x,y)}{h(x)h(y)},\quad t>0,\quad x,y\in \Rdz
\end{equation}
(later on we shall extend $\rho$ to $(0,\infty)\times\Rd\times\Rd$). 
We consider the integral weight $h^2(x),  x\in \Rd$.
{Doob-type conditioning is also called Davies' method, see Murugan and Saloff-Coste \cite{MR3601569}, and $h$ is sometimes called desingularizing weight, see Milman and Norsemen \cite{MR2064932}.}
By \eqref{eq:2},
\begin{equation}\label{e:rhois1}
\int_\Rd \rho_t(x,y)h^2(y)\ud y
=1, \quad x\in \Rdz, \quad t>0.
\end{equation}
By \rf{eq:ChKforp},
\begin{align}
     &\int_{\Rd} \rho_s(x, y)\rho_t(y, z)h^2(y)\ud y = 
     \int_{\Rd} \frac{\tilde p(s,x, y)}{h(x)h(y)}\frac{\tilde p(t,y, z)}{h(y)h(z)}h^2(y)\ud y\nonumber\\
     &=\frac{1}{h(x)h(z)} \tilde p(t+s,x, z)
     =\rho_{t+s}(x,z), \quad x, z \in  \Rdz,\, s, t > 0.\label{e.CKfrho}
\end{align} 
We see that $\rho$ is a symmetric time-homogeneous transition probability density on $\Rdz$ with the reference measure $h^2(y)\ud y$. 
For nonnegative $f\in L^1(h^2)$, 
by Fubini-Tonelli and \eqref{e:rhois1},
\begin{align*}
\int_\Rd \int_\Rd f(y)\rho_t(x,y)h^2(y) \ud y \,h^2(x) \ud x 
 &=\int_\Rd f(y) h^2(y) \ud y ,
\end{align*}
so the operators
$$
\mathcal{R}_t f(x):=\int_\Rd f(y)\rho_t(x,y)\,h^2(y)  \ud y , \quad t>0, 
$$
are contractions on $L^1(h^2)$.
By \eqref{ss form}, $\rho$ is self-similar: for $t>0$ and $x,y \in \bbfR^d$ we have
\begin{equation}\label{ss form rho}
  \rho_t(x, y)=\frac{t^{-d/\alpha } \tilde p\big(1, t^{-1/\alpha }x , t^{-1/\alpha }y \big)}
  {t^{-\delta/\alpha}h(t^{-1/\alpha}x)t^{-\delta/\alpha}h(t^{-1/\alpha}y)}
  =t^{\frac{2\delta-d}{\alpha} } \rho_1\big(t^{-1/\alpha }x , t^{-1/\alpha }y \big),
\end{equation}
hence
\begin{equation}\label{ss form rhos}
  \rho_{st}(t^{1/\alpha }x, t^{1/\alpha }y)
  =t^{\frac{2\delta-d}{\alpha} } \rho_s(x , y), \quad s>0.
\end{equation}
For each (linear) isometry $T$ of $\Rd$,
$h\circ T=h$, thus by \eqref{isom} we get
\begin{equation}\label{risom}
  \rho_t(Tx, Ty)=\rho_t (t,x,y),
\qquad  t>0,  \ x,y \in \Rdz.
\end{equation}
By \eqref{eq:mainThmEstz},
\begin{equation}
\label{eq:mainThmEstrho}
\rho_1(x,y)\approx \left(1 \wedge {|x-y|^{-d-\alpha}} \right)\left(1+|x|^{\delta}  \right)\left(1+|y|^{\delta}  \right) ,\quad 
 \ x,y \in \Rdz.
\end{equation}
Notably, $\rho_1$ is not bounded (consider large $x=y$). By \eqref{ss form rho},
\begin{equation}
\label{eq:mainThmEstrhot}
\rho_t(x,y)\approx \left( t^{-d/\alpha}\wedge \frac{t}{|x-y|^{d+\alpha}} \right)\left(t^{\delta/\alpha}+|x|^{\delta}  \right)\left(t^{\delta/\alpha}+|y|^{\delta}  \right),\quad 
t>0,  \ x,y \in \Rdz.
\end{equation}
We also note that if $0<t_1<t_2<\infty$, then
\begin{equation}\label{e.crt1t2}
\rho_{s}(x,y)\approx \rho_{t}(x,y), \quad x,y\in \Rdz, \quad t_1\le s,t\le t_2
\end{equation}
(the  comparability constant depends on $t_1$, $t_2$ and $d$, $\alpha$, $\delta$).
By \eqref{eq:mainThmEstrho}, for every $M\in (0,\infty)$,
\begin{equation}
\label{eq:mainThmEstrho2}
\rho_1(x,y)\approx (1+|y|)^{-d-\alpha+\delta}, \quad 0<|x|\le M,\quad y\in \Rdz.
\end{equation}
Clearly, the right-hand side is integrable with respect to $h^2(y) \ud y $ and bounded. Our aim is to prove the continuity of $\rho_1$, notably at $x=0$.

\section{Limiting behaviour at the origin
}\label{s:csss}
Here is a full-fledged variant of Theorem~\ref{t:eta2}.
\begin{theorem}\label{t:eta}
The function $\rho$ has a continuous extension to $(0,\infty)\times \Rd\times \Rd$ and 
\begin{equation}\label{e.det}
\rho_t(0,y):=\lim_{x\to 0}\rho_t(x,y),
\quad t>0,\quad y\in \Rdz ,
\end{equation}
satisfies:
\begin{equation}\label{ss form eta}
	\rho_t(0,y)
	= t^{\frac{2\delta-d}{\alpha} } \rho_1(0,t^{-1/\alpha }y), \quad t>0,\quad y\in \Rdz ,
\end{equation}
\begin{equation}\label{e:ssf}
\int_\Rd \rho_t(0,y)\rho_s(y,z)h^2(y)\ud y=\rho_{t+s}(0,z),\quad s,t>0,\quad z\in \Rdz .
\end{equation}
\end{theorem}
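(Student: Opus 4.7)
The plan is to split the proof into three parts: (i) joint continuity of $\rho$ on $(0,\infty)\times\Rdz\times\Rdz$; (ii) existence of the pointwise limit \rf{e.det} and continuous extension to $(0,\infty)\times\Rd\times\Rd$; (iii) the two functional identities \rf{ss form eta} and \rf{e:ssf}.

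For (i), joint continuity of $\tilde p$ on $(0,\infty)\times\Rdz\times\Rdz$ follows by iterating the Duhamel identity \rf{eq:Df1} together with continuity of $p$ and the sharp two-sided bound \rf{eq:mainThmEstz}, which furnishes a dominating function off the singular set $\{0\}$. Dividing by the continuous positive factor $h(x)h(y)$ preserves continuity, so $\rho_t(x,y)$ is jointly continuous on $(0,\infty)\times\Rdz\times\Rdz$.

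For (ii), following the strategy announced in Subsection~\ref{s:Mm}, I would convert the singular limit $x\to 0$ into a long-time limit of an Ornstein--Uhlenbeck-type semigroup derived from the Doob-conditioned kernel $\rho$. Explicitly, the scaling \rf{ss form rhos} gives, for $x\ne 0$ and $t>0$,
\[
\rho_t(x,y)=|x|^{d-2\delta}\,\rho_{t|x|^{-\alpha}}\!\bigl(x/|x|,\,y/|x|\bigr),
\]
so the behaviour as $x\to 0$ (for fixed $t$, $y$) is governed by the large-time behaviour of $\rho_s(\theta,u)$ with $\theta$ on the unit sphere. Using the Doob transform into $L^2(h^2)$ and the hypercontractivity of Theorem~\ref{hyperc}, the associated OU semigroup is compact on a suitable weighted $L^p$-space; combined with irreducibility supplied by the strict positivity lower bound in \rf{eq:mainThmEstrho}, this yields existence and uniqueness of a strictly positive invariant density $\pi$. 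Translating the convergence $\mathcal R_s(\theta,\cdot)\to\pi$ back through the scaling provides the limit \rf{e.det}; the uniform bound \rf{eq:mainThmEstrho2} on compact subsets of $y\in\Rdz$ and rotational symmetry \rf{risom} upgrade the limit to uniform convergence in $x$ near $0$, yielding the continuous extension.

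For (iii), identity \rf{ss form eta} is obtained by passing $x\to 0$ in \rf{ss form rho}, since $t^{-1/\alpha}x\to 0$ as $x\to 0$. Identity \rf{e:ssf} follows by passing $x\to 0$ in the Chapman--Kolmogorov relation \rf{e.CKfrho}: the bound \rf{eq:mainThmEstrhot} dominates $\rho_t(x,y)h^2(y)$ (uniformly for $x$ in a neighborhood of $0$) by a function that is integrable against $\rho_s(y,z)$, justifying dominated convergence. The principal obstacle is the argument in (ii), namely establishing existence and uniqueness of a strictly positive invariant density for the OU-type semigroup and convergence to it that is strong enough---uniformly in $\theta$ on the unit sphere---to translate back into pointwise convergence of $\rho_t(x,y)$ at $x=0$. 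The positivity/irreducibility provided by \rf{eq:mainThmEstrho} and the hypercontractivity framework of Section~\ref{s.fatp} are the essential inputs that make this feasible.
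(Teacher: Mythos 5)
Your overall architecture matches what the paper announces in Section~\ref{s:Mm}: convert the spatial singularity $x\to 0$ into the long-time limit of the Ornstein--Uhlenbeck-type semigroup $L_t$, establish a unique stationary density, and translate back through the scaling. The use of \rf{ss form rhos}, \rf{eq:mainThmEstrho2}, \rf{risom}, and dominated convergence for \rf{e:ssf} is likewise on target. However, there are two substantive problems, and one cosmetic one.

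First, the claim that hypercontractivity (Theorem~\ref{hyperc}) makes $L_t$ compact on a weighted $L^p$ space is not justified and very likely false. Theorem~\ref{hyperc} is an $L^1(H)\to L^q(h^{2-q})$ bound for the semigroup $\tilde P_t$, not for $L_t$ acting on $L^p(h^2)$. More fundamentally, the reference measure $h^2(y)\ud y$ has infinite total mass, so even ultracontractivity would not yield compactness; and the kernel $l_t$ inherits a translation-like structure from $\rho$ which obstructs Hilbert--Schmidt bounds (take $x,y$ large with $y\approx e^{-t/\alpha}x$ and check against \rf{eq:mainThmEstrhot}). The paper avoids any compactness of $L_t$ altogether: it verifies the lower-bound and uniform-absolute-continuity hypotheses of Komorowski's theorem \cite[Theorem~3.1]{MR1162571} on the invariant family $F$ of \rf{e.comp}, and then invokes the Kulik--Scheutzow coupling form of Doob's theorem \cite{MR3345324} for the $L^1(h^2)$ asymptotic stability \rf{e.convL}. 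If you want to keep a compactness-flavoured argument, the paper itself suggests the viable alternative: weak compactness of the set $F$ plus Schauder--Tychonoff, which is compactness of a set of densities, not of the operator.

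Second, the step from $L^1(h^2)$ convergence of $\rho_1(x,\cdot)\to\varphi$ to the pointwise limit \rf{e.det} and the two-variable continuous extension is more delicate than ``uniform bound on compacts plus rotational symmetry.'' The uniform bound controls size but not the oscillation of $\rho_1(x,y)$ as $x\to 0$, and continuity of $\varphi$ is itself not automatic from an $L^1$ limit. The paper needs: (a) a separate smoothing argument showing $\varphi = L_1\varphi$ has a continuous version with $\varphi(y)\approx(1+|y|)^{-d-\alpha+\delta}$ (Lemmas~\ref{l.ext} and Corollary~\ref{c.bphi}); (b) the pointwise limit $\rho_1(x,y)\to\varphi(y)$ via the Chapman--Kolmogorov factorization
\begin{equation*}
\rho_1(x,y)=2^{\frac{d-2\delta}{\alpha}}\int_{\Rd}\rho_1(2^{1/\alpha}x,z)\,\rho_1(z,2^{1/\alpha}y)\,h^2(z)\ud z
\end{equation*}
combined with Lemma~\ref{l.crho}; and (c) yet another Chapman--Kolmogorov split $\rho_1(x,y)=\int_{\Rd}\rho_{1/2}(z,x)\rho_{1/2}(z,y)\ud z$ to get continuity jointly in $(x,y)$ including at the origin (Lemma~\ref{Lemma_rho_lim_0}). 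Your proposal omits these steps.

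Finally, a sign slip: from \rf{ss form rhos} with $t=|x|^{-\alpha}$ one gets $\rho_t(x,y)=|x|^{2\delta-d}\,\rho_{t|x|^{-\alpha}}\bigl(x/|x|,\,y/|x|\bigr)$, with $|x|^{2\delta-d}\to\infty$ as $x\to 0$ (since $2\delta<d$), not $|x|^{d-2\delta}$ as you wrote.
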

The proof of Theorem~\ref{t:eta} is given below in this section. Let us explain the line of attack.
Given \eqref{e.det} and taking the limit in \eqref{e.CKfrho} as $x\to 0$, 
we should get \eqref{e:ssf}.
By \eqref{ss form rho} we should then obtain \eqref{ss form eta}
and
\begin{equation}\label{e:ssf2}
\int_\Rd t^{\frac{2\delta-d}{\alpha}}\rho_1(0,t^{-1/\alpha}y)\rho_s(y,z)h^2(y)\ud y=(t+s)^{\frac{2\delta-d}{\alpha}} \rho_1(0,(t+s)^{-1/\alpha}z),\quad 
s,t>0, z \in \Rdz.
\end{equation}
Changing variables $u=(t+s)^{-1/\alpha}z$ and $x=t^{-1/\alpha}y$, we see that \eqref{e:ssf2} is equivalent to
\begin{equation}\label{e:ssf3}
\int_\Rd \rho_1(0,x)(t+s)^{\frac{d-2\delta}{\alpha}}\rho_s(t^{1/\alpha}x,(t+s)^{1/\alpha}u)h^2(x)\ud x= 
\rho_1(0,u).
\end{equation}
By 
\eqref{ss form rhos}, this is the same as
\begin{equation}\label{e:ssf3b}
\int\limits_\Rd \rho_{\frac{s}{s+t}}\left(\left(t/(s+t)\right)^{1/\alpha}x,u\right)\rho_1(0,x)h^2(x)\ud x= \rho_1(0,u).
\end{equation}
In what follows we shall \textit{define} $\rho_1(0,\cdot)$ as a solution to the integral equation \rf{e:ssf3b} and then essentially reverse the above reasoning. Additionally, to simplify the notation and arguments
we introduce an auxiliary Ornstein-Uhlenbeck-type semigroup. 
\subsection{Ornstein-Uhlenbeck semigroup}
For $f\ge 0$  we let
\begin{equation}
L_tf(y)=\int_\Rd l_t(x,y) f(x)h^2(x) \ud x , \label{e:ssf4e}
\end{equation}
where
\begin{equation}\label{e.dlt}
l_t(x,y)=\rho_{1-e^{-t}}(e^{-t/\alpha}x,y), \qquad t>0,\quad x,y\in \Rdz.
\end{equation}
By \eqref{e.CKfrho} and \eqref{ss form rhos}, 
\begin{align*}
&\int_\Rd l_s(x,y) l_t(y,z)h^2(y)\ud y=
\int_\Rd \rho_{1-e^{-s}}(e^{-s/\alpha}x,y)  \rho_{1-e^{-t}}(e^{-t/\alpha}y,z)h^2(y) \ud y \\
&=\int_\Rd \rho_{1-e^{-s}}(e^{-s/\alpha}x,y)  
(e^{-t})^{\frac{2\delta-d}{\alpha}} \rho_{e^{t}-1}(y,e^{t/\alpha}z)h^2(y) \ud y \\
&=(e^{-t})^{\frac{2\delta-d}{\alpha}} 
\rho_{e^t-e^{-s}}(e^{-s/\alpha}x,  e^{t/\alpha} z)
=\rho_{1-e^{-s-t}}(e^{-(s+t)/\alpha}x,z) =l_{t+s}(x,z).
\end{align*}
Thus $l_t(x,y)$ is a transition density on $\Rdz$ 
with respect to the measure $h^2(y) \ud y $. By Fubini's theorem, 
$\{L_t\}_{ t>0}$ 
 is a semigroup of operators on $L^1(h^2)$, an Ornstein-Uhlenbeck-type semigroup, see \cite[solution to E 18.17 on p. 462--463]{MR3185174}. It shall be a major technical tool in our development.

\subsection{Stationary density}

If  $\varphi\ge 0$
and $\int \varphi(x)h^2(x)\ud x=1$, then we say that $\varphi$ is a \textit{density}.
By Fubini-Tonelli Theorem and \eqref{e:rhois1}, for $t>0$ and $f\ge0$, we have
\begin{align}\label{e.iso-1}
\int_\Rd L_t f(y)h^2(y) \ud y  
&=\int_\Rd f(x)h^2(x) \ud x .
\end{align}
Thus, the operators $L_t$ preserve densities. So they are Markov, see
Komorowski \cite{MR1162571}, 
Lasota and Mackey \cite{MR1244104}, Lasota and York \cite{MR1265226} for this setting.

We say that a density $\varphi$ is \textit{stationary} for
$L_t$ if
$
L_t\varphi=\varphi.
$

\begin{theorem}\label{c.gsall} There is a unique stationary  density $\varphi$ for  the operators 
$L_t$, $t>0$.
\end{theorem}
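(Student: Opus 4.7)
My strategy is Krylov--Bogolyubov for existence and a Doeblin-type minorization, boosted by a built-in Lyapunov drift, for uniqueness; both steps are driven by the two-sided estimates \eqref{eq:mainThmEstrho}--\eqref{eq:mainThmEstrhot} for $\rho$.

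\textbf{Existence.} Fix $t_0>0$ and pick a convenient probability density on $(\Rdz,h^2\ud x)$, say $\psi_0 = c\,\indyk_{\{1\le|x|\le 2\}}$ with the normalizing constant chosen so that $\int \psi_0\, h^2\ud x=1$. Form the Cesaro averages
\begin{equation*}
\bar\psi_N:=\frac{1}{N}\sum_{n=0}^{N-1} L_{t_0}^n\psi_0,\qquad N\ge 1,
\end{equation*}
which remain probability densities by \eqref{e.iso-1}. I would check that the measures $\bar\psi_N\, h^2\ud y$ are tight on $\Rdz$: from $l_{t_0}(x,y)=\rho_{1-e^{-t_0}}(e^{-t_0/\alpha}x,y)$ and \eqref{eq:mainThmEstrhot} one gets, for any $R>0$ and $r>0$, that $\int_{|y|>R}l_{t_0}(x,y)h^2(y)\ud y$ and $\int_{|y|<r}l_{t_0}(x,y)h^2(y)\ud y$ are small uniformly in $x$ (outside a large ball, the polynomial tail $|y|^{-d-\alpha}$ takes over; near the origin the factor $h^2(y)=|y|^{-2\delta}$ is integrable against the bounded part of $l_{t_0}(x,y)$). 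The same pointwise upper bound gives $L_{t_0}\psi\le\Theta$ for a fixed $\Theta\in L^1(h^2)$ whenever $\psi$ is a probability density supported away from $0$, which yields uniform integrability. A weak limit $\varphi$ along a subsequence $N_k\to\infty$ is then a probability density, and the standard telescoping identity $L_{t_0}\bar\psi_N-\bar\psi_N=(L_{t_0}^N\psi_0-\psi_0)/N$ combined with dominated convergence yields $L_{t_0}\varphi=\varphi$. Running the construction along a countable dense set of $t_0$ and using the semigroup property together with the continuity of $t\mapsto l_t$ available from \eqref{e.crt1t2}, I would promote this to $L_t\varphi=\varphi$ for every $t>0$.

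\textbf{Uniqueness.} Using \eqref{eq:mainThmEstrho2} and the scaling \eqref{ss form rho}, one extracts a Doeblin-type minorization: for each $R>0$ there exist $t_R>0$, constants $0<c_R\le C_R<\infty$, and a probability density $\phi_R$ on $(\Rdz,h^2\ud y)$ such that
\begin{equation*}
c_R\,\phi_R(y)\le l_{t_R}(x,y)\le C_R\,\phi_R(y),\qquad 0<|x|\le R,\;y\in\Rdz.
\end{equation*}
For two stationary densities $\varphi_1,\varphi_2$ and $\mu:=(\varphi_1-\varphi_2)\,h^2$, applying $L_{t_R}$ strictly contracts the total variation of the restriction of $\mu$ to $\{|x|\le R\}$, by a factor at most $1-c_R/C_R$. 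The tail of $\mu$ outside $\{|x|\le R\}$ is controlled by the Lyapunov mechanism hidden in the semigroup: the prefactor $e^{-t/\alpha}$ in the first argument of $l_t$ pushes all mass toward the origin, so the $L_t^n$-image of any probability density has arbitrarily small mass outside $\{|x|\le R\}$ for $n$ large enough. Combining the two effects forces $\mu\equiv 0$, hence $\varphi_1=\varphi_2$ a.e.

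\textbf{Main obstacle.} Existence should follow the Krylov--Bogolyubov template once the tail bounds at $0$ and $\infty$ are in place, which is largely a bookkeeping exercise with \eqref{eq:mainThmEstrhot}. The delicate step is uniqueness: the minorization is genuinely local in $x$, while the state space $\Rdz$ is unbounded at both the origin and infinity, the former carrying the singular reference density $h^2$. The crucial observation --- that the explicit contraction $x\mapsto e^{-t/\alpha}x$ inside $l_t$ itself plays the role of a Lyapunov drift toward a bounded ``small set'' --- is the geometric content of the proof; quantifying this while keeping the $h^2$-weighting and the singularity at the origin under control is where the work lies.
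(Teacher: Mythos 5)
Your existence strategy via Krylov--Bogolyubov is a legitimate alternative: the paper itself remarks, just after its proof, that Krylov--Bogolyubov (or Schauder--Tychonoff plus weak compactness) would work in place of the machinery actually used there, which is a theorem of Komorowski \cite[Theorem~3.1]{MR1162571} verified on the family $F$ of mixtures $\int_B\rho_1(x,\cdot)\,\mu(\ud x)$. However, your tightness claim is phrased as uniform in $x$, and that fails as $|x|\to\infty$ (the kernel $l_{t_0}(x,\cdot)h^2$ is then concentrated near $e^{-t_0/\alpha}x$, which escapes). What actually saves you --- and what the paper's proof leverages explicitly through the scaling \eqref{ss form rho} and the estimate \eqref{eq:mainThmEstrho2} --- is that, starting from $\psi_0$ supported in a fixed annulus, the iterates $L_{t_0}^n\psi_0$ only ever see first arguments $e^{-nt_0/\alpha}x$ lying in a fixed bounded set, so \eqref{e.crt1t2} and \eqref{eq:mainThmEstrho2} give a single integrable envelope $\approx (1+|y|)^{-d-\alpha+\delta}$ uniformly in $n$. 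You should index the domination by $n$, not claim uniformity in $x$, and state explicitly that $\psi_0$ having compact support in $\Rdz$ is essential.

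For uniqueness your sketch is a Harris/Doeblin-with-Lyapunov scheme, and there is a real gap. The pivotal sentence ``the $L_t^n$-image of any probability density has arbitrarily small mass outside $\{|x|\le R\}$ for $n$ large enough'' is not uniform over initial densities, and when applied to a \emph{stationary} $\varphi_i$ --- the only densities you need to compare --- it is vacuous, since $L_t^n\varphi_i=\varphi_i$ for all $n$. To close your argument you would have to identify an explicit Lyapunov function (something like $V(x)=1+|x|^a$ for small $a>0$ does satisfy a drift inequality $L_t^*V\le\gamma V+K$ with $\gamma=e^{-at/\alpha}<1$, by \eqref{ss form rho} and \eqref{eq:mainThmEstrho}), establish a priori that $\int V\,\varphi_i\,h^2\ud x<\infty$ via truncation, and then invoke Harris' theorem with the small set $\{|x|\le R\}$. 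None of this is impossible, but it is substantially heavier than the paper's argument, which sidesteps all quantitative tail control: if $r=\varphi_1-\varphi_2$ satisfies $Pr=r$ and $r\not\equiv 0$, then since $l_t>0$ everywhere both $Pr_+$ and $Pr_-$ are strictly positive, hence $|Pr|<P|r|$ pointwise, and integrating against $h^2$ gives $\int|r|h^2=\int|Pr|h^2<\int P|r|\,h^2=\int|r|h^2$ by \eqref{e.iso-1}, a contradiction. You should either supply the missing Lyapunov bookkeeping or replace your uniqueness step by this short positivity argument.
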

\begin{proof}
Fix $t>0$ and let $P=L_t$ 
so that 
$
	P^k=L_{kt}
$, $k=1,2,\ldots$.
By \eqref{e.dlt} and \eqref{ss form rho},  for all $f\ge 0$ and $k\in \bbfN$ 
we have
\begin{eqnarray}\label{e.pts}
P^{k}f(u) &=& \int_{\bbfR^d}  f(y)
\rho_{1-e^{-kt}}(e^{-kt/\alpha}y,u) h^2(y) \ud y \\
&=& \int_{\bbfR^d}  f(y)
(e^{-kt})^{\frac{2\delta-d}{\alpha}} \rho_{e^{kt}-1}(y,e^{kt/\alpha}u) h^2(y) \ud y , \qquad u \in \Rdz.\nonumber
\end{eqnarray}
Let $B=\{x\in \Rd: 0<|x|\le 1\}$. We write $f\in F$ if
\begin{equation}\label{e.comp}
f(y)=\int_B \rho_1(x,y)\mu(\!\ud x)
\end{equation} 
for some subprobability measure $\mu$ concentrated on $B$. Then,
by  \eqref{e.CKfrho} and \eqref{ss form rho},
\begin{align}\nonumber
P^{k}f(u) &= \int_B  (e^{-kt})^{\frac{2\delta-d}{\alpha}}
\int_{\bbfR^d}  \rho_1(x,y) \rho_{e^{kt}-1}(y,e^{kt/\alpha}u) h^2(y) \ud y \mu(\!\ud x)\\
\nonumber
&= \int_B    (e^{-kt})^{\frac{2\delta-d}{\alpha}}
 \rho_{e^{kt}}(x,e^{kt/\alpha}u)\mu(\!\ud x)\\
\nonumber
&= \int_B  \rho_1\left(e^{-kt/\alpha}x, u\right)\mu(\!\ud x)\\
\label{e.pts3}
&= \int_B  \rho_1(x, u)\tilde \mu(\!\ud x)\approx (1+|u|)^{-d-\alpha+\delta}, \qquad u \in \Rdz, 
\end{align} 
where $\tilde \mu$ is a subprobability measure concentrated on $e^{-kt/\alpha}B\subset B$. Thus, $P^k F\subset F$. We note that the comparison \eqref{e.pts3} is independent of $k$, see \eqref{eq:mainThmEstrho2}.
We next argue
that the hypotheses of \cite[Theorem~3.1]{MR1162571} hold true for $P$ and $F$: (C1) -- the lower bound, and (C2) -- the uniform absolute continuity. Indeed, by \eqref{e.pts3},
$$\liminf_{n\to \infty}\frac1n\sum_{k=1}^n
\int_B P^k f(u) h^2(u)\ud u
\approx \int_B (1+|u|)^{-d-\alpha+
\delta} h^2(u)\ud u
>0,$$ which yields (C1) in \cite{MR1162571}.
Also, (C2) therein is satisfied because
$$\int_A P^kf(y)h^2(y) \ud y \le C\int_A (1+|y|)^{-d-\alpha+
\delta}h^2(y) \ud y \to 0,$$
uniformly in $k$ as $\int_A h^2(y) \ud y\to 0$, due to the integrability of $(1+|y|)^{-d-\alpha+2\delta}$.
By \cite[Theorem~3.1]{MR1162571}, a density $\varphi$ exists satisfying $L_t\varphi=P\varphi=\varphi$.
Moreover, the 
density $\varphi$ is unique. Indeed, if $\psi$ is a probability density with respect to $h^2(x)\ud x$ and $P\psi=\psi$, then 
$r:=\varphi-\psi$ satisfies $Pr=r$, too. If $r=0$ $a.e.$, then we are done. Otherwise, $\int_\Rd r_+(x) h^2(x) \ud x =\int_\Rd r_-(x) h^2(x) \ud x >0$. Then, on the one hand $|Pr|=|r|$, on the other hand for $a.e.$ $x\in \Rd$,
\begin{align*}
Pr(x)&=P r_+(x)-Pr_-(x)\\
&= \int_{\bbfR^d}  r_+(y) 
l_t(y,x) h^2(y) \ud y 
-\int_{\bbfR^d}  r_-(y) 
l_t(y,x) h^2(y) \ud y 
\end{align*}
and both terms are nonzero, because 
$l_t$ is 
positive. Therefore ,
$$|Pr(x)|<P r_+(x)\vee Pr_-(x)\le P|r|(x).$$
By this and \eqref{e.iso-1},
\begin{align*}
\int_\Rd |r(x)|h^2(x) \ud x &=\int_\Rd |Pr(x)|h^2(x) \ud x <\int_\Rd P|r|(x)h^2(x) \ud x =\int_\Rd 
|r(x)| h^2(x) \ud x .
\end{align*}
We obtain a contradiction, so $r=0$, and $\psi=\varphi$ in $L^1(h^2)$. In passing we note that the above argument is a part of the proof of Doob's theorem \cite[Theorem~4.2.1]{MR1417491}.

Since the operators $L_t$, $t>0$, commute, they have the same stationary density, that is $\varphi$. Indeed, if $s>0$ and $Q=L_s$, then $P(Q\varphi)=QP\varphi=(Q\varphi)$, and $Q\varphi$ is a density, so by the uniqueness, $Q\varphi=\varphi$.
\end{proof}
In passing we like to refer the interested reader to additional literature on the existence and uniqueness of stationary densities and measures. The book of Foguel \cite{MR0261686} gives a concise introduction to ergodic theory of Markov processes, in particular to the $L^1$ setting. Stationary measures and densities are discussed in 
Da Prato and Zabczyk \cite[Remark~3.1.3]{MR1417491}, \cite[Theorem~3.1]{MR1265226}, Lasota \cite[Theorem~6.1]{MR1452617}, and Komorowski, Peszat, Szarek \cite{MR2663632}. 
A nice presentation of asymptotic stability (and periodicity) of Markov operators is given by Komornik \cite{MR826761}, see also Lasota and Mackey \cite[p. 373, 11.9.4]{MR1244104}, Komorowski and Tyrcha \cite{MR1101473}, and Stettner \cite{MR1810695}.

We note that the existence of a stationary density in our setting would also follow from the -- perhaps more constructive -- approach using the weak compactness of the set of functions given by \eqref{e.comp} and Schauder-Tychonoff fixed point theorem, see, e.g., Rudin \cite[5.28 Theorem]{MR1157815}, see also \cite[Section~3.8]{MR1157815}, the Dunford-Pettis theorem in Voigt \cite[Chapter~15]{MR4182424} and \eqref{e.pts3} above. We could also use the Krylov-Bogolioubov theorem, see, e.g., \cite{MR1244104} or the minicourse of Hairer \cite{mc2021MH}; and it is always worthy to check with Doob \cite{MR25097}, especially that his example (0.9) touches upon the classical Ornstein-Uhlenbeck semigroup.

In what follows, $\varphi$ denotes the stationary  density  for  the operators $L_t$, $t>0$
(in Lemma 
\ref{l.ext}
 below we verify that $\varphi$ can be defined pointwise so as to be continuous).

\subsection{Asymptotic stability}
The state space $\Rdz $ is a Polish space and $l_t$ is positive. By Theorem~\ref{c.gsall} and the results 
of Kulik and Scheutzov \cite[Theorem~1 and Remark~2]{MR3345324}, for every $x\in \Rdz $ we get
\begin{equation}\label{e.convL}
\int_\Rd \left|l_t(x,y) -\varphi(y)\right|h^2(y) \ud y \to 0 \mbox{ as } t\to \infty.
\end{equation}
This is the (large-time) asymptotic stability of the semigroup aforementioned in the title of this subsection.
Let $A$ be a bounded subset of $\Rdz$ and $x,x_0\in A$.
By Theorem~\ref{c.gsall} and \rf{eq:mainThmEstrho},
\begin{align}
\int_\Rd \left|l_{1+t}(x,y) -\varphi(y)\right|h^2(y) \ud y 
&=\int_\Rd  \left| \int_\Rd l_1(x,z) \left(l_{t}(z,y) -\varphi(y)\right) h^2(z) \ud   z  \right|h^2(y) \ud y \nonumber \\
&
\le 
c\int_\Rd  l_1(x_0,z) \int_\Rd  \left|l_{t}(z,y) -\varphi(y)\right| h^2(y) \ud y\  h^2(z)\ud z.\label{e.li}
\end{align}
We have $I_t(z):=\int_\Rd  \left|l_{t}(z,y) -\varphi(y)\right| h^2(y) \ud   y  \to 0$ as $t\to \infty$. Furthermore,  for every $z\in \Rdz$,
$I_t(z)\le \int_\Rd  \left(l_{t}(z,y) +\varphi(y)\right) h^2(y) \ud y=2$.
Of course, $\int_\Rd  2 l_1(x_0,z) \ h^2(z)\ud z=2<\infty$.
By the dominated convergence theorem, the iterated integral in \eqref{e.li} converges to $0$, therefore the convergence in \eqref{e.convL} is uniform for $x\in A$. In terms of $\rho$, \eqref{e.convL} reads as follows: uniformly in $x\in A$,
\begin{equation}\label{e.conv}
\int_\Rd \left|\rho_{1-e^{-t}}(e^{-t/\alpha}x,y) -\varphi(y)\right|h^2(y) \ud y \to 0 \mbox{ as } t\to \infty.
\end{equation}
As a consequence we obtain the following spatial convergence in $L^1(h^2)$.
\begin{lemma}\label{l.crho}
We have $\int_\Rd \left|\rho_{1}(x,y) -\varphi(y)\right|h^2(y) \ud y \to 0$ as $x\to 0$.
\end{lemma}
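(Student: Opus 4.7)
The plan is to realize $\rho_1(w,\cdot)$ as the image under $L_\tau$ of a density that is uniformly controlled in $\hat w := w/|w|$, with $\tau = \tau(w) \to \infty$ as $w \to 0$, and then to derive the claim from the asymptotic stability \eqref{e.convL} by a dominated convergence argument.

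For $w \in \Rdz$ set $\hat w := w/|w|$ and $\tau := -\alpha \log |w|$, so that $e^{-\tau/\alpha} = |w|$, $1 - e^{-\tau} = 1 - |w|^\alpha$, and $\tau \to \infty$ iff $w \to 0$. The first and main task is to establish the identity
\begin{equation*}
\rho_1(w, y) = L_\tau \bigl[\rho_1(\hat w, \cdot)\bigr](y), \qquad y \in \Rdz.
\end{equation*}
To prove this I start from Chapman--Kolmogorov \eqref{e.CKfrho}, $\rho_1(w, y) = \int \rho_{|w|^\alpha}(w, z)\, \rho_{1 - |w|^\alpha}(z, y)\, h^2(z)\, \ud z$, rewrite the first factor via the scaling \eqref{ss form rhos} (with $s = 1$, $t = |w|^\alpha$, $x = \hat w$) as $\rho_{|w|^\alpha}(w, z) = |w|^{2\delta - d} \rho_1(\hat w, z/|w|)$, and substitute $u = z/|w|$ using $h^2(|w|u) = |w|^{-2\delta} h^2(u)$; the $|w|$-powers cancel and the integral becomes $\int \rho_1(\hat w, u)\, \rho_{1 - |w|^\alpha}(|w| u, y)\, h^2(u)\, \ud u = L_\tau[\rho_1(\hat w, \cdot)](y)$.

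Once the identity is in hand the rest follows a standard density-convergence pattern. Since $\rho_1(\hat w, \cdot)$ is a probability density with respect to $h^2(u)\,\ud u$ by \eqref{e:rhois1}, the decomposition $\varphi(y) = \int \rho_1(\hat w, u)\,\varphi(y)\,h^2(u)\,\ud u$ gives
\begin{equation*}
\rho_1(w, y) - \varphi(y) = \int \rho_1(\hat w, u)\bigl[l_\tau(u, y) - \varphi(y)\bigr] h^2(u)\, \ud u.
\end{equation*}
Taking absolute values, integrating against $h^2(y) \ud y$, and swapping the order of integration via Fubini--Tonelli yields
\begin{equation*}
\int_\Rd |\rho_1(w, y) - \varphi(y)|\, h^2(y)\, \ud y \le \int_\Rd \rho_1(\hat w, u)\, I(u, \tau)\, h^2(u)\, \ud u,
\end{equation*}
where $I(u, \tau) := \int |l_\tau(u, y) - \varphi(y)|\, h^2(y)\, \ud y$. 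Pointwise $I(u, \tau) \to 0$ for every $u \in \Rdz$ by \eqref{e.convL}, and $I(u, \tau) \le 2$. By \eqref{eq:mainThmEstrho2} (with $M = 1$) there is a constant $C$ with $\rho_1(\hat w, u) \le C(1 + |u|)^{-d - \alpha + \delta}$ uniformly in $|\hat w| = 1$, and this envelope is $h^2(u)$-integrable: near $0$ one has $h^2(u) \approx |u|^{-2\delta}$, integrable since $2\delta \le d - \alpha < d$, and at infinity the integrand is $\approx |u|^{-d - \alpha - \delta}$. The dominated convergence theorem then delivers $\int |\rho_1(w, \cdot) - \varphi|\, h^2 \to 0$ as $w \to 0$.

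I expect the only genuine obstacle to be the bookkeeping in the identity $\rho_1(w, \cdot) = L_\tau[\rho_1(\hat w, \cdot)]$, since one must thread together the scaling, Chapman--Kolmogorov, and the $h^2$-factors coherently; the remainder is a density-of-the-stationary-distribution argument in the spirit of the one already used to upgrade \eqref{e.convL} to the uniform version \eqref{e.conv}.
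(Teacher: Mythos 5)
Your proof is correct, and it takes a genuinely different (and arguably cleaner) route than the paper's. The paper proceeds in two stages: it first upgrades \eqref{e.convL} to the uniform version \eqref{e.conv} over bounded sets (via a Chapman--Kolmogorov decomposition and dominated convergence), and then deduces the lemma from \eqref{e.conv} by unwinding the scaling explicitly, invoking the continuity of dilations on $L^1$ and a triangle-inequality step, before reparametrizing $x$ by its direction and magnitude. You instead compress the scaling and Chapman--Kolmogorov manipulations into the single exact identity $\rho_1(w,\cdot)=L_\tau[\rho_1(\hat w,\cdot)]$, which exhibits $\rho_1(w,\cdot)-\varphi$ as the $\rho_1(\hat w,\cdot)\,h^2\,\ud u$-average of $l_\tau(u,\cdot)-\varphi$, and then one application of dominated convergence (uniform majorant from \eqref{eq:mainThmEstrho2} with $M=1$, pointwise decay from \eqref{e.convL}, trivial bound $I(u,\tau)\le 2$) finishes the job. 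Structurally your argument is really the same dominated-convergence mechanism the paper uses to derive \eqref{e.conv}, but you aim it directly at the lemma, which makes the dilation-continuity and triangle steps unnecessary. The one thing worth saying explicitly, which you correctly handle but merit emphasizing, is that the dominating function $C(1+|u|)^{-d-\alpha+\delta}$ does not depend on $\hat w$, which is exactly why DCT applies even though $\hat w$ varies with $w$; this is the content of \eqref{eq:mainThmEstrho2} restricted to the unit sphere. Everything checks: the $|w|$-powers from the scaling and the Jacobian and $h^2$ cancel as you describe, the identity requires $|w|<1$ so that $1-|w|^\alpha\in(0,1)$ (harmless since $w\to 0$), and the envelope $(1+|u|)^{-d-\alpha+\delta}|u|^{-2\delta}$ is integrable because $2\delta\le d-\alpha<d$ near the origin and $-d-\alpha-\delta<-d$ at infinity.
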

\begin{proof}
It suffices to make  cosmetic changes to \eqref{e.conv}. Of course, $e^{-t/\alpha}x\to 0$ and $1-e^{-t}\to 1$ as $t\to \infty$. By scaling, 
$$\rho_{1-e^{-t}}(e^{-t/\alpha}x,y)=(1-e^{-t})^{(2\delta-d)/\alpha}\rho_1\big((e^t-1)^{-1/\alpha}x,(1-e^{-t})^{-1/\alpha}y\big),$$ 
so
\begin{align}\label{rho_et}
\int_\Rd \left|\rho_1\big((e^t-1)^{-1/\alpha}x,(1-e^{-t})^{-1/\alpha}y\big) -\varphi(y)\right|h^2(y) \ud y \to 0 \mbox{ as } t\to \infty.
\end{align}
By the continuity of dilations on $L^1(\!\ud x)$,
\begin{align*}
\int_\Rd \left|\varphi\big((1-e^{-t})^{-1/\alpha}y\big) h^2(y)-\varphi(y)h^2(y)\right| \ud y \to 0 \mbox{ as } t\to \infty.
\end{align*}
Using the triangle inequality and changing variables in \rf{rho_et} we get
\begin{align*}
	\int_\Rd \left|\rho_1\big((e^{t}-1)^{-1/\alpha}z,y\big) -\varphi(y)\right|h^2(y) \ud y \to 0 \mbox{ as } t\to \infty
\end{align*}
uniformly in $z\in A$ for bounded $A\subset \Rdz$. We take $A$ as the unit sphere, for $x\in \Rdz$ we write $x=(e^t-1)^{-1/\alpha}z$, where $t=\ln (1+|x|^{-\alpha})$ and $z=x/|x|\in A$, and we obtain the result.
\end{proof}
By Lemma~\ref{l.crho} and \eqref{eq:mainThmEstrho2}
we obtain the following estimate.
\begin{corollary}\label{c.bphi}
$\varphi(y)\approx (1+|y|)^{-d-\alpha+\delta}$ for almost all $y\in \Rd$.
\end{corollary}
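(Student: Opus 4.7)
My plan is to combine the $L^1(h^2)$ convergence of Lemma~\ref{l.crho} with the uniform two-sided pointwise bound \eqref{eq:mainThmEstrho2} and pass to the limit along a subsequence.

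First, I would fix $M=1$ in \eqref{eq:mainThmEstrho2}, so that there is a constant $c\ge 1$ (depending only on $d,\alpha,\delta$) such that
\begin{equation*}
	c^{-1}(1+|y|)^{-d-\alpha+\delta}\ \le\ \rho_1(x,y)\ \le\ c(1+|y|)^{-d-\alpha+\delta},\qquad 0<|x|\le 1,\ y\in\Rdz .
\end{equation*}
This bound is uniform in $x$ on the punctured unit ball, which is the crucial point.

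Next, pick any sequence $x_n\in\Rdz$ with $x_n\to 0$ and $|x_n|\le 1$. Lemma~\ref{l.crho} gives $\rho_1(x_n,\cdot)\to\varphi$ in $L^1(h^2)$. Since $h^2(y)\ud y$ is a $\sigma$-finite positive measure on $\Rdz$, $L^1$-convergence forces convergence in measure, and so a subsequence $x_{n_k}$ can be chosen along which $\rho_1(x_{n_k},y)\to\varphi(y)$ for $h^2(y)\ud y$-almost every $y\in\Rdz$. Because $h^2$ is strictly positive on $\Rdz$, the exceptional set has Lebesgue measure zero as well.

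Finally, pass to the pointwise limit $k\to\infty$ in the two-sided bound applied to each $x_{n_k}$ to conclude
\begin{equation*}
	c^{-1}(1+|y|)^{-d-\alpha+\delta}\ \le\ \varphi(y)\ \le\ c(1+|y|)^{-d-\alpha+\delta}\quad\text{for a.e.}\ y\in\Rd,
\end{equation*}
which is exactly the claim. There is no serious obstacle here: the only point requiring a touch of care is extracting pointwise a.e.\ convergence from $L^1(h^2)$ convergence, which is handled by the standard subsequence principle, and the uniformity (independence of $x$) of the comparison constants, which is built into \eqref{eq:mainThmEstrho2}.
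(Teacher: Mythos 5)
Your argument is correct and matches the paper's intended proof: the paper simply states that the corollary follows from Lemma~\ref{l.crho} and \eqref{eq:mainThmEstrho2}, and the subsequence extraction of a.e.\ convergence from $L^1(h^2)$ convergence is exactly the routine step being left implicit. You have spelled it out accurately, including the point that $h^2>0$ on $\Rdz$ so the $h^2\ud y$-null set is also Lebesgue null.
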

By the next result we may actually consider  $\vf$ as defined pointwise.
\begin{lemma}\label{l.ext}
After modification on a set of Lebesgue measure zero, $\varphi$ is a continuous radial function on $\Rd$ and $\varphi(y)\approx (1+|y|)^{-d-\alpha+\delta}$ for all $y\in \Rd$.
\end{lemma}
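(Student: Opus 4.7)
The plan is to define a continuous version $\tilde\varphi$ of $\varphi$ by applying the smoothing operator $L_{t_0}$ once and then extending by continuity to the origin. Fix $t_0>0$ and set $\tilde\varphi(y):=L_{t_0}\varphi(y)=\int_{\Rd}l_{t_0}(x,y)\varphi(x)h^2(x)\,\ud x$ pointwise for every $y\in\Rdz$; the integral converges and is uniformly bounded on compact subsets of $\Rdz$ since \eqref{eq:mainThmEstrhot} and Corollary~\ref{c.bphi} supply an integrable dominator ($C|x|^{-2\delta}$ near $x=0$, integrable as $2\delta<d$, and a fast-decaying power at infinity). From the stationarity $L_{t_0}\varphi=\varphi$ in $L^1(h^2)$ we get $\tilde\varphi=\varphi$ a.e., so it is enough to prove $\tilde\varphi$ is radial, continuous, and comparable to $(1+|y|)^{-d-\alpha+\delta}$.

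For radial symmetry, fix a linear isometry $T$ of $\Rd$. The invariance \eqref{risom} implies $l_{t_0}(Tz,Ty)=l_{t_0}(z,y)$, and since $h^2$ and Lebesgue measure are $T$-invariant, the substitution $x=Tz$ gives $\tilde\varphi(Ty)=L_{t_0}(\varphi\circ T)(y)$. A parallel computation shows that $\varphi\circ T$ is again a stationary density for every $L_s$; by the uniqueness part of Theorem~\ref{c.gsall}, $\varphi\circ T=\varphi$ a.e., so $\tilde\varphi(Ty)=L_{t_0}\varphi(y)=\tilde\varphi(y)$ for all $y\in\Rdz$, and $\tilde\varphi$ is radial there.

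Continuity of $\tilde\varphi$ on $\Rdz$ follows from dominated convergence: for $y_n\to y_0\in\Rdz$ the integrand $l_{t_0}(x,y_n)\varphi(x)h^2(x)$ is uniformly dominated by an integrable function, and the map $y\mapsto l_{t_0}(x,y)=\rho_{1-e^{-t_0}}(e^{-t_0/\alpha}x,y)$ is continuous at $y_0\ne 0$ for each $x\ne 0$, as a consequence of continuity of $\tilde p$ away from its singular set (obtained from the Duhamel formula \eqref{eq:Df1} and continuity of the stable density $p$). Combining continuity on $\Rdz$ with Corollary~\ref{c.bphi} upgrades the a.e. comparison to the pointwise bound $\tilde\varphi(y)\approx(1+|y|)^{-d-\alpha+\delta}$ on $\Rdz$; by radiality, $\tilde\varphi(y)=F(|y|)$ for a continuous $F\colon(0,\infty)\to(0,\infty)$ bounded between positive constants near $0$.

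The main obstacle is extending $\tilde\varphi$ continuously to $y=0$. Since $\{0\}$ has Lebesgue measure zero, it suffices to show that $\lim_{r\to 0^+}F(r)$ exists; we then set $\tilde\varphi(0)$ equal to this limit. A direct dominated-convergence argument fails, because it would require pointwise existence of $\lim_{y\to 0}l_{t_0}(x,y)$, which is equivalent to Theorem~\ref{t:eta2} and hence circular. I would instead combine Lemma~\ref{l.crho}, the Chapman-Kolmogorov identity \eqref{e.CKfrho}, and the scaling \eqref{ss form rhos} to prove that $F$ is Cauchy as $r\to 0^+$: for small $|y|=r$, choose $t=t(r)\to\infty$ and rewrite $\tilde\varphi(y)=L_{t}\tilde\varphi(y)$ in rescaled form as a weighted $L^1(h^2)$-pairing of $\rho_{1-e^{-t}}(e^{-t/\alpha}x,\cdot)$ against $\tilde\varphi$; the uniform $L^1(h^2)$-convergence \eqref{e.conv} of these kernels to $\varphi$ (over bounded sets of $x$) forces $|F(r_1)-F(r_2)|$ to vanish as $r_1,r_2\to 0^+$. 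The two-sided bound at $y=0$ then follows from continuity of both sides of the comparison, and replacing $\varphi$ by $\tilde\varphi$ on the null set where they differ completes the proof.
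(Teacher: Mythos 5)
Your first three steps (defining $\tilde\varphi=L_{t_0}\varphi$, radiality via isometry invariance plus uniqueness of the stationary density, and continuity on $\Rdz$ by dominated convergence with the majorant from \eqref{eq:mainThmEstrho2} and Corollary~\ref{c.bphi}) are sound and match the paper's proof closely. Your diagnosis that a direct dominated-convergence passage $y\to0$ would require $\lim_{y\to0}l_{t_0}(x,y)$ pointwise, which is exactly the content of Theorem~\ref{t:eta}, is also correct.

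The gap is in the last step, and it is not merely a matter of filling in details: the plan as described does not work. Starting from $\tilde\varphi(y)=L_t\tilde\varphi(y)$ and changing variables $x=e^{t/\alpha}w$ (equivalently $x=(e^t-1)^{1/\alpha}z$), one finds
\begin{equation*}
\tilde\varphi(y)=e^{t(d-2\delta)/\alpha}\int_{\Rdz}\rho_1\big((1-e^{-t})^{-1/\alpha}y,\,z\big)\,\tilde\varphi\big((e^t-1)^{1/\alpha}z\big)\,h^2(z)\ud z .
\end{equation*}
Your proposal takes $t=t(r)\to\infty$ coupled to $r=|y|\to0$ and invokes the uniform $L^1(h^2)$-convergence \eqref{e.conv} to replace the kernel by $\varphi(z)$. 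But the weight you pair against, namely $e^{t(d-2\delta)/\alpha}\tilde\varphi\big((e^t-1)^{1/\alpha}z\big)$, has $L^\infty$-norm of order $e^{t(d-2\delta)/\alpha}\to\infty$ (even though it integrates to $1$ against $h^2\,\ud z$), while \eqref{e.conv} gives convergence with no rate. So the error term $\|l_t(e^{t/\alpha}y,\cdot)-\varphi\|_{L^1(h^2)}\cdot e^{t(d-2\delta)/\alpha}\|\tilde\varphi\|_\infty$ is not controlled, and the Cauchy estimate for $F$ does not follow. The paper avoids this entirely by keeping $t>0$ fixed and letting only $y\to0$: then $(1-e^{-t})^{-1/\alpha}y\to0$, so by the symmetry of $\rho_1$ and Lemma~\ref{l.crho} the factor $\rho_1((1-e^{-t})^{-1/\alpha}y,\cdot)\to\varphi(\cdot)$ in $L^1(h^2)$, while the second factor $\varphi((e^t-1)^{1/\alpha}\cdot)$ is essentially bounded by Corollary~\ref{c.bphi} with a bound that is harmless because $t$ is fixed. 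The $y\to0$ limit then exists and equals $\int_\Rdz\varphi\big(e^{-t/\alpha}x\big)\varphi\big((1-e^{-t})^{1/\alpha}x\big)h^2(x)\ud x<\infty$, completing the extension to the origin without any coupling of $t$ to $|y|$.
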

\begin{proof}By Theorem~\ref{c.gsall}, $\vf=L_1\vf$ ($a.e.$), so  it suffices to prove that
$L_1 \vf$ is $a.e.$ equal to a continuous function on $\Rd$. 
By \eqref{e.crt1t2} and \eqref{eq:mainThmEstrho2}, $c(1+|y|)^{-d-\alpha+\delta}$ is an integrable majorant of $l_1(x,y)$ for bounded $x$. Furthermore, $\vf$ is essentially bounded ($a.e.$), by Corollary~\ref{c.bphi}.
The function $l_1(x,y)$ is continuous in $y\in \Rdz$ so, by the dominated convergence theorem, $L_1\vf(y)$ is continuous for $y\in \Rdz$.
It remains to prove
the convergence of
$L_1\vf(y)$ to a finite limit as $y\to 0$.
We let $t>0$ and $y \to 0$. By scaling, changing variables, the symmetry of $\rho_1$ and Lemma~\ref{l.crho}, 
\begin{align}
\nonumber
L_t \vf(y)&=\int_\Rdz \rho_{1-e^{-t}}\big(e^{-t/\alpha}x,y\big)\vf(x)h^2(x) \ud x \\\nonumber
&=\int_\Rdz e^{t(d-2\delta)/\alpha}\rho_{1}\big(z, (1-e^{-t/\alpha})^{1/\alpha}y\big)\vf\big((e^t-1)^{1/\alpha}z\big)h^2(z) \ud z \\\nonumber
&\to
\int_\Rdz e^{t(d-2\delta)/\alpha}\vf(z)\vf\big((e^t-1)^{1/\alpha}z\big)h^2(z) \ud z \\
&=\int_\Rdz \vf((e^{-t})^{1/\alpha}x)\vf\big((1-e^{-t})^{1/\alpha}x\big)h^2(x) \ud x <\infty.\label{e.dvf0}
\end{align}
Since $-2d-2\alpha+2\delta<-d-3\alpha<-d$, the finiteness in \eqref{e.dvf0} follows from Corollary \ref{c.bphi}.
This proves  the continuity of the extension of $L_1\vf$ on the whole of $\Rd$. The rest of the lemma follows from the continuity, Lemma~\ref{l.crho} and \eqref{risom}, and from Corollary~\ref{c.bphi}.
\end{proof}
Needless to say, the continuous modification of $\vf$ is unique and pointwise defined for every $x\in \Rd$. From now on the extension shall be denoted by $\vf$. 
By the equality in \eqref{e.dvf0}, 
\begin{equation}\label{e.wvf0}
\vf(0)=\int_\Rdz \vf\big(\lambda^{1/\alpha}x\big)\vf\big((1-\lambda)^{1/\alpha}x\big)h^2(x) \ud x 
\end{equation}
for every $\lambda \in [0,1]$, including the endpoint cases, since $\varphi$ is a density.
In particular,
\begin{equation}\label{e.wvf02}
\vf(0)=\int_\Rd \vf(2^{-1/\alpha}x)^2 h^2(x) \ud x .
\end{equation}

\subsection{Regularization of $\rho$}

We are now in a position to prove
convergence of $\rho_1(x,y)$ as $x\to 0$ to a finite limit. By scaling, Chapman-Kolmogorov, Lemma~\ref{l.crho} and the boundedness of $\vf$, for $y\in \Rdz$ and $\Rdz\ni x\to 0$ we get
\begin{eqnarray*}
\rho_1(x,y)&=&2^\frac{d-2\delta}{\alpha}\rho_2(2^{1/\alpha}x,2^{1/\alpha}y)\\
&=&2^\frac{d-2\delta}{\alpha}\int \rho_1(2^{1/\alpha}x,z)\rho_1(z,2^{1/\alpha}y)h^2(z) \ud z \\
&\to&
2^\frac{d-2\delta}{\alpha}\int \vf(z)\rho_1(z,2^{1/\alpha}y)h^2(z) \ud z \\
&=&\int \vf(z)\rho_{1/2}(2^{-1/\alpha}z,y)h^2(z) \ud z =L_{\ln 2}\vf(y)=\vf(y).
\end{eqnarray*}
Thus, for every $y\neq 0$,
\begin{equation}\label{rho_lim}
	\rho_1(0,y):=\lim_{x\to 0} \rho_1 (x,y) =\vf(y).
\end{equation}
By scaling, for all $t>0$ we get
\begin{equation}\label{e.srty}
	\rho_t(0,y) := \lim_{x\to 0} \rho_t (x,y)= t^{\frac{2\delta -d}{\alpha}}\rho_1\left(0, t^{-1/{\alpha}}y\right)=t^{\frac{2\delta-d}{\alpha}} \vf \left(t^{-1/\alpha}y \right).
\end{equation}
By \eqref{e:rhois1}, for $x\not=0$ we have
\begin{align*}
	\int_{\Rd} \rho_1(x,y)|y|^{-2\delta}  \ud y  = 
	1.
\end{align*}
By
\rf{eq:mainThmEstrho2} and \rf{rho_lim},
\begin{equation}\label{rhosim1}
	\rho_{1} (x,y)|y|^{-2\delta} \approx 
	(1+|y|)^{-d-\alpha+\delta}|y|^{-2\delta}, \quad |x|<1, \ y\neq 0.
\end{equation} 
Thus, applying \rf{rho_lim} and the dominated convergence theorem, we get
\begin{equation}\label{est_t}
	\int_{\Rd} \rho_t(0,y)h^2(y)  \ud y 	=1,\quad t>0.
\end{equation}
By the symmetry of $\rho_t$ and \eqref{e.srty}, for all $t>0$, $x\in \Rdz$ we also have
\begin{equation}\label{e.srtx}
	\rho_t(x,0) := \lim_{y\to 0} \rho_t (x,y)=\rho_t(0,x)= t^{\frac{2\delta -d}{\alpha}}\rho_1\left(t^{-1/{\alpha}}x,0\right)=t^{\frac{2\delta-d}{\alpha}}\vf \left( t^{-1/\alpha}x\right).
\end{equation}
\begin{lemma}\label{Lemma_rho_lim_0} 
	The function $\rho$ has a unique continuous positive extension to $(0,\infty)\times \Rd\times \Rd$.
\end{lemma}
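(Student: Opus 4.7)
The plan is to complete the definition by setting $\rho_t(0,0) := t^{(2\delta-d)/\alpha}\,\vf(0)$, which is forced by both \eqref{e.srty} as $y \to 0$ and \eqref{e.srtx} as $x \to 0$, and then to verify joint continuity on $(0,\infty) \times \Rd \times \Rd$. Positivity of the extended $\rho$ is immediate: on $(0,\infty) \times \Rdz \times \Rdz$ it follows from the lower bound in \eqref{eq:mainThmEstrhot}, while on the boundary it reduces to $\vf > 0$, which holds pointwise by Lemma~\ref{l.ext}. Continuity on $(0,\infty)\times\Rdz\times\Rdz$ is standard from the joint continuity of $\tilde p$ on its off-diagonal domain (via the Duhamel representation \eqref{eq:Df1}, the estimate \eqref{eq:mainThmEstz2}, and dominated convergence) together with continuity and positivity of $h$ on $\Rdz$. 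Uniqueness of the continuous extension is automatic from the density of $\Rdz$ in $\Rd$.

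For a mixed boundary point $(t_0, 0, y_0)$ with $y_0 \in \Rdz$ (the symmetric case is analogous), I would split the semigroup via \eqref{e.CKfrho} at a small intermediate time $s \in (0, t_0/2)$,
\[
\rho_t(x, y) = \int_\Rd \rho_s(x, z)\,\rho_{t-s}(z, y)\,h^2(z)\,\ud z,
\]
and let $(t, x, y) \to (t_0, 0, y_0)$. A scaled form of Lemma~\ref{l.crho}, obtained by the change of variables from \eqref{ss form rho}, gives $\rho_s(x, \cdot) \to \rho_s(0, \cdot)$ in $L^1(h^2)$ as $x \to 0$, while $\rho_{t-s}(z, y)$ converges pointwise to $\rho_{t_0-s}(z, y_0)$ by the interior continuity and is uniformly bounded for $(t, y)$ in a neighborhood of $(t_0, y_0)$ by \eqref{eq:mainThmEstrhot} and \eqref{e.crt1t2}. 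Dominated convergence yields the limit $\int \rho_s(0, z)\,\rho_{t_0-s}(z, y_0)\,h^2(z)\,\ud z$, which equals $\rho_{t_0}(0, y_0)$ by passing $x \to 0$ inside \eqref{e.CKfrho} directly (the same dominated-convergence estimate justifies this), and this value matches \eqref{e.srty}.

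The main obstacle is the doubly-singular case $(t, x, y) \to (t_0, 0, 0)$. I would again apply \eqref{e.CKfrho}, now with $s = t/2$,
\[
\rho_t(x, y) = \int_\Rd \rho_{t/2}(x, z)\,\rho_{t/2}(z, y)\,h^2(z)\,\ud z,
\]
and send all three arguments to their limits simultaneously. The first factor converges to $\rho_{t_0/2}(0, z)$ in $L^1(h^2)$ by the scaled Lemma~\ref{l.crho}, and the second factor converges pointwise to $\rho_{t_0/2}(z, 0) = \rho_{t_0/2}(0, z)$ by the mixed boundary case just treated together with the symmetry \eqref{e.srtx}. The delicate step is producing an integrable dominant on the second factor, measured against $\rho_{t_0/2}(0, \cdot)\,h^2$: from \eqref{eq:mainThmEstrhot} one has $\rho_{t/2}(z, y) \le C\,(1 \wedge |z|^{-d-\alpha})(1 + |z|^\delta)$ for $(t, y)$ near $(t_0, 0)$, and combined with the decay $\rho_{t_0/2}(0, z)\,h^2(z) \le C\,(1+|z|)^{-d-\alpha+\delta} |z|^{-2\delta}$ from Corollary~\ref{c.bphi}, integrability is secured precisely by the standing condition $2\delta \le d - \alpha$ (controlling both the singularity near $z = 0$ and the tail at infinity). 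The resulting limit $\int \rho_{t_0/2}(0,z)^2 h^2(z)\,\ud z$ evaluates, via the change of variables $z = (t_0/2)^{1/\alpha} u$ and the identity \eqref{e.wvf02}, to $t_0^{(2\delta-d)/\alpha} \vf(0) = \rho_{t_0}(0, 0)$, as required.
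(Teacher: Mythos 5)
Your proof is correct and rests on the same mechanism as the paper's: the Chapman--Kolmogorov factorization at an intermediate time, the $L^1(h^2)$ convergence of Lemma~\ref{l.crho} (packaged as the pointwise limits \eqref{rho_lim}, \eqref{e.srty}, \eqref{e.srtx}), the polynomial decay \eqref{eq:mainThmEstrho2}, and dominated convergence, followed by the scaling \eqref{ss form rho} and a density argument for uniqueness. The difference is organizational. The paper factors $\rho_1$ once at time $1/2$ and notes that, for each fixed $z\in\Rdz$, both $x\mapsto\rho_{1/2}(z,x)$ and $y\mapsto\rho_{1/2}(z,y)$ are already known to converge at every target point of $\Rd$ (interior continuity plus \eqref{e.srtx}, \eqref{e.srty}); a single integrable majorant $(1+|z|)^{-2d-2\alpha+2\delta}|z|^{-2\delta}$ from \eqref{rhosim} then lets one pass to the limit under the integral sign simultaneously in $x$ and $y$, so no case split is needed. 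You instead separate interior, mixed-boundary and doubly-singular target points, in each case using $L^1(h^2)$ convergence of one factor against uniform boundedness and pointwise convergence of the other (a correct ``mixed'' dominated-convergence argument, which you set up carefully, including the uniform majorant near $(t_0,0)$). That is sound but duplicates work, and the closing verification via \eqref{e.wvf02} that the doubly-singular limit equals $\rho_{t_0}(0,0)$ is reassuring but unnecessary once Chapman--Kolmogorov has been passed to the limit in $x$ alone.
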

\begin{proof}
	By Chapman-Kolmogorov and the symmetry of $\rho_1$, for $x,y\neq 0$, we have
	\begin{align}\label{rho1_2}
		\rho_1(x,y) = \int_{\Rd} \rho_{\nicefrac{1}{2}} (z,x) \rho_{\nicefrac{1}{2}}(z,y)  \ud   z .
	\end{align}
	Recall that $\rho_1$ is continuous on $\Rdz\times \Rdz$. This and \rf{e.srty} yield for all $z\in \Rdz$ 
that $\rho_{\nicefrac{1}{2}} (z,x) \to \rho_{\nicefrac{1}{2}} (z,x_0)$  and $\rho_{\nicefrac{1}{2}} (z,y)\to \rho_{\nicefrac{1}{2}} (z,y_0)$ if $x\to x_0\in \Rd$ and $y\to y_0\in \Rd$, hence 
	\begin{equation}\label{d.r100}
		\rho_1 (x,y) \to \int_{\Rd}\rho_{\nicefrac{1}{2}} (z,x_0)\rho_{\nicefrac{1}{2}} (z,y_0)  \ud   z , 
	\end{equation}
	by the dominated convergence theorem, since
	for bounded $x,y\neq 0$, by
	and \rf{e.crt1t2} and \rf{eq:mainThmEstrho2},
	\begin{equation}\label{rhosim}
		\rho_{\nicefrac{1}{2}} (z,x)\rho_{\nicefrac{1}{2}} (z,y) \approx 
		(1+|z|)^{-2d-2\alpha+2\delta}.
	\end{equation} 
	The latter function
	is integrable because $-2d-2\alpha+2\delta<-d-3\alpha<-d$.
	Furthermore,
	\eqref{rhosim} implies that the limit in \eqref{d.r100} is positive and, by \rf{rho1_2}, it is an extension of $\rho_1$ to $\Rd\times \Rd$, which we shall denote by $\rho_1$ again.
	In view of \rf{ss form rho},
	\begin{equation}\label{e.et}
t^{\frac{2\delta-d}{\alpha} } \rho_1\big(t^{-1/\alpha }x , t^{-1/\alpha }y \big), \quad x,y\in \Rd,\quad t>0,
	\end{equation}
	is finite and defines a continuous extension of $\rho_t$ for each $t>0$. The 
extension is clearly positive and jointly continuous in $t,x,y$.
It is also unique, since $(0,\infty)\times \Rdz\times \Rdz$ is dense in $(0,\infty)\times \Rd\times \Rd$.
\end{proof}
We recall that the boundedness of $\rho_1$ near the origin readily follows from \eqref{eq:mainThmEstrho}. On the other hand the continuity 	turned out difficult for us to capture directly or indirectly. For instance we  considered this connection in \cite{MR4164845}. The proof presented above seems to be underpinned by self-regularization of functions satisfying (implicit) integral equations.

In what follows, $\rho$ will denote the continuous extension of $\rho$ to $(0,\infty)\times \Rd\times \Rd$. 
\begin{corollary}
	$\rho_1(0,0)= \lim_{x,y\to 0} \rho_1(x,y)\in (0,\infty)$.
\end{corollary}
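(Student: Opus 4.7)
The corollary should be essentially an immediate reading of Lemma~\ref{Lemma_rho_lim_0}, so my plan is to harvest what is already there rather than do any new work. First, I invoke Lemma~\ref{Lemma_rho_lim_0} to obtain a continuous positive extension of $\rho$ to $(0,\infty)\times\Rd\times\Rd$. Joint continuity at the point $(1,0,0)$ immediately yields the joint limit
\[
\lim_{x,y\to 0}\rho_1(x,y)=\rho_1(0,0),
\]
and the positivity assertion of that lemma gives $\rho_1(0,0)>0$. Finiteness is also built into the definition of the extension, since the extension was defined in \eqref{d.r100} as a convergent integral.

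For a more transparent identification of the value, my plan is to note that the continuity of $\rho_1$ on $\Rd\times\Rd$ together with \eqref{e.srtx} (or \eqref{rho_lim}) and Lemma~\ref{l.ext} gives
\[
\rho_1(0,0)=\lim_{y\to 0}\rho_1(0,y)=\lim_{y\to 0}\vf(y)=\vf(0),
\]
with $\vf(0)\in(0,\infty)$ by Lemma~\ref{l.ext} (equivalently, by the explicit formula \eqref{e.wvf02}, whose right-hand side is finite and positive thanks to Corollary~\ref{c.bphi} and the decay exponent $-2d-2\alpha+2\delta<-d$). This identification is a nice consistency check but is not strictly needed for the statement.

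I do not anticipate any real obstacle here: the hard work — existence of the stationary density $\vf$, its continuity and decay, and the self-regularization argument producing the joint continuous extension of $\rho$ — has already been done in Theorem~\ref{c.gsall}, Lemmas~\ref{l.crho}, \ref{l.ext}, and \ref{Lemma_rho_lim_0}. The only genuinely two-variable issue, namely upgrading the separate limits $\rho_1(0,y)=\vf(y)$ and $\rho_1(x,0)=\vf(x)$ to a joint limit as $(x,y)\to(0,0)$, is precisely what the joint continuity in Lemma~\ref{Lemma_rho_lim_0} delivers, so the proof of the corollary reduces to one line of quotation from that lemma plus the identification $\rho_1(0,0)=\vf(0)$.
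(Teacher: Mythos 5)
Your proposal is correct and matches the paper's intent exactly: the corollary is stated immediately after Lemma~\ref{Lemma_rho_lim_0} with no separate proof, precisely because joint continuity and positivity of the extension at $(1,0,0)$ deliver the statement directly, and the convergent integral in \eqref{d.r100} gives finiteness. The extra identification $\rho_1(0,0)=\vf(0)$ via \eqref{e.srtx}, Lemma~\ref{l.ext} and \eqref{e.wvf02} is a sound consistency check but, as you note, not required.
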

The following Chapman-Kolmogorov equations hold, so $\rho$ is a transition density on $\Rd$.
\begin{corollary}\label{c.CH} For all $s,t>0$, $x,y\in \Rd$, we have
	$
	\int_\Rd \rho_s(x,z)\rho_t(z,y) h^2(z) \ud z =\rho_{s+t}(x,y).
	$
\end{corollary}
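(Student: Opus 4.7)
The plan is to extend the Chapman--Kolmogorov identity \eqref{e.CKfrho}, already known for $x,z\in\Rdz$, to all $x,y\in\Rd$ by a continuity and dominated convergence argument. Fix $s,t>0$ and $x_0,y_0\in\Rd$, and choose sequences $x_n,y_n\in\Rdz$ with $x_n\to x_0$ and $y_n\to y_0$. Applying \eqref{e.CKfrho} at $(x_n,y_n)$, I would let $n\to\infty$: the right-hand side $\rho_{s+t}(x_n,y_n)$ tends to $\rho_{s+t}(x_0,y_0)$ by the joint continuity afforded by Lemma~\ref{Lemma_rho_lim_0}, and the pointwise convergence of the integrand for each fixed $z\in\Rdz$ follows from the same lemma applied separately in the first slot of $\rho_s$ and in the second slot of $\rho_t$.

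The substance of the argument is the construction of an $n$-uniform integrable majorant for the function $z\mapsto \rho_s(x_n,z)\rho_t(z,y_n)h^2(z)$. For this I would invoke the sharp two-sided estimate \eqref{eq:mainThmEstrhot}. Since $|x_n|$ and $|y_n|$ stay bounded by some $M$, the factors $s^{\delta/\alpha}+|x_n|^\delta$ and $t^{\delta/\alpha}+|y_n|^\delta$ are controlled by constants depending only on $s,t,M,\delta$. The heat-kernel factor in $\rho_s$ is bounded by the crude $s^{-d/\alpha}$, while for the $\rho_t$-factor I would split on $|z|$: for $|z|\le 2M$ use the crude bound $t^{-d/\alpha}$, and for $|z|\ge 2M$ use $|z-y_n|\ge |z|/2$ to obtain a uniform tail bound $2^{d+\alpha}t|z|^{-d-\alpha}$. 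After multiplication by $h^2(z)=|z|^{-2\delta}$, the majorant behaves locally like $(s^{\delta/\alpha}+|z|^\delta)^2|z|^{-2\delta}$ near $z=0$ and like $|z|^{-d-\alpha}$ at infinity.

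The only point that needs attention is the local integrability of $|z|^{-2\delta}$ near the origin, which holds precisely because $2\delta<d$; this in turn is a consequence of the standing hypothesis $\delta\le(d-\alpha)/2$ together with $\alpha>0$. The decay $|z|^{-d-\alpha}$ at infinity is obviously integrable. Dominated convergence then passes the limit through the integral on the left-hand side of \eqref{e.CKfrho}, yielding the claimed identity. The main obstacle is thus purely bookkeeping across the regimes $z$ near $0$, $z$ near $y_0$, and $|z|$ large; there is no genuine analytic difficulty once Lemma~\ref{Lemma_rho_lim_0} is in hand and \eqref{eq:mainThmEstrhot} is available.
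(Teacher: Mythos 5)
Your proof is correct and follows the same route as the paper's own (very terse) argument: pass to the limit in \eqref{e.CKfrho} using the continuity established in Lemma~\ref{Lemma_rho_lim_0} (resp.\ \eqref{rho_lim}) and dominated convergence, with the integrable majorant extracted from the two-sided estimate. The paper simply records the majorant $(1+|z|)^{-2d-2\alpha+2\delta}|z|^{-2\delta}$ in one line, whereas you carry out the regime-splitting bookkeeping explicitly; the substance is identical.
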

\begin{proof}
	Since $(1+|z|)^{-2d-2\alpha+2\delta}|z|^{-2\delta}$ is integrable, we can use \rf{rho_lim} and \rf{e.CKfrho}.
\end{proof}
\begin{proof}[Proof of Theorem~\ref{t:eta}]
The first statement of the theorem is proved in Lemma~\ref{Lemma_rho_lim_0}, \eqref{ss form eta} is proved more generally as \eqref{e.et}, and \eqref{e:ssf} is proved more generally in
Corollary~\ref{c.CH}. See also \eqref{rho_lim} and \eqref{e.srty} for a more explicit expression of \eqref{e.det}.
\end{proof}

\begin{proof}[Proof of Theorem~\ref{t:eta2}]
The result is immediate from \eqref{e:drhot} and \eqref{e.det}.
\end{proof}

We can now extend \eqref{eq:mainThmEstrhot} as follows.
\begin{corollary}\label{cor:mainThmEstrhot}
\begin{align*}
\rho_t(x,y)\approx \left( t^{-d/\alpha}\wedge \frac{t}{|x-y|^{d+\alpha}} \right)\left(t^{\delta/\alpha}+|x|^{\delta}  \right)\left(t^{\delta/\alpha}+|y|^{\delta}  \right) ,\quad 
t>0,  \ x,y \in \Rd.
\end{align*}
\end{corollary}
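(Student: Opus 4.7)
The plan is a short continuity argument that upgrades the comparability already stated in \eqref{eq:mainThmEstrhot} from $\Rdz\times\Rdz$ to the full product $\Rd\times\Rd$. Write the right-hand side as
\begin{equation*}
F(t,x,y):=\left( t^{-d/\alpha}\wedge \frac{t}{|x-y|^{d+\alpha}} \right)\left(t^{\delta/\alpha}+|x|^{\delta}  \right)\left(t^{\delta/\alpha}+|y|^{\delta}  \right).
\end{equation*}

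First I would check that for each fixed $t>0$ the map $(x,y)\mapsto F(t,x,y)$ is jointly continuous on $\Rd\times\Rd$: the two factors depending on $|x|^\delta$ and $|y|^\delta$ are manifestly continuous on $\Rd$ (recall $\delta\ge 0$), and the first factor is continuous on $\Rd\times\Rd$ under the natural convention $t/0=\infty$ at $x=y$, in which case the infimum equals $t^{-d/\alpha}$. On the other hand, Lemma~\ref{Lemma_rho_lim_0} provides a jointly continuous positive extension of $\rho_t(x,y)$ to $(0,\infty)\times\Rd\times\Rd$, so in particular $\rho_t(\cdot,\cdot)$ is continuous on $\Rd\times\Rd$ for each fixed $t>0$.

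By \eqref{eq:mainThmEstrhot}, there exist constants $c_1,c_2\in(0,\infty)$, depending only on $d,\alpha,\delta$, such that
\begin{equation*}
c_1 F(t,x,y)\le \rho_t(x,y)\le c_2 F(t,x,y)\qquad\text{for all } t>0,\ x,y\in\Rdz.
\end{equation*}
Given any $(x_0,y_0)\in\Rd\times\Rd$, density of $\Rdz\times\Rdz$ in $\Rd\times\Rd$ furnishes a sequence $(x_n,y_n)\in\Rdz\times\Rdz$ with $(x_n,y_n)\to(x_0,y_0)$. Passing to the limit using the continuity of both $\rho_t$ and $F$ in $(x,y)$ extends the two-sided inequality to $(x_0,y_0)$, which is exactly the claim.

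No serious obstacle is expected; the argument is purely by continuity, and all the ingredients have been recorded earlier in the section. A direct sanity check (useful at $x=0$ or $y=0$) is to combine the explicit identity $\rho_t(0,y)=t^{(2\delta-d)/\alpha}\vf(t^{-1/\alpha}y)$ from \eqref{e.srty} with the two-sided bound $\vf(z)\approx(1+|z|)^{-d-\alpha+\delta}$ from Lemma~\ref{l.ext} and split into the regimes $|y|\le t^{1/\alpha}$ and $|y|> t^{1/\alpha}$; in both regimes this reproduces $F(t,0,y)$, confirming the formula obtained by the continuity argument.
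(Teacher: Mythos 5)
Your argument is correct and is precisely the (unwritten) reasoning the paper intends: the two-sided bound \eqref{eq:mainThmEstrhot} holds on $\Rdz\times\Rdz$ with constants depending only on $d,\alpha,\delta$, both $\rho_t$ and the comparison function are continuous on $\Rd\times\Rd$ by Lemma~\ref{Lemma_rho_lim_0}, and density passes the inequalities to the closure. The paper states the corollary without proof, so your continuity-plus-density argument together with the sanity check via \eqref{e.srty} and Lemma~\ref{l.ext} is exactly what is needed.
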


\begin{example}
For $\kappa=0$ we have $H=h=1$, $\rho=\tilde p=p$, $l_t(x,y)=p_{1-e^{-t}}(e^{-t/\alpha}x,y)$, $\varphi(x)=p_1(0,x)$, and $\Psi_t(x)=p_t(0,x)$.
\end{example}

\section{Functional analysis of $\tilde P_t$}\label{s.fatp}
We recall that
$\tilde P_t\vf (x) := \int_{\R^d} \tilde p(t,x,y) \vf (y) \ud y$.
By \eqref{eq:2}, $\tilde P_t h = h$ for all $t>0$.

\begin{lemma}\label{contraction}
$\{\tilde P_t\}_{t>0}$
 is a contraction semigroup on $L^1(h)$ and for every $f\in L^1(h)$, we have
\begin{equation}\label{e.cl}
\int_\Rd \tilde P_t f(x) h(x)\ud x=\int_\Rd f(x)h(x)\ud x,\quad t>0.
\end{equation}
\end{lemma}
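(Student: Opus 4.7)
The proof plan rests on three pillars already available: the invariance identity \eqref{eq:2}, the symmetry of $\tilde p$, and the Chapman-Kolmogorov equation \eqref{eq:ChK}. First I would establish \eqref{e.cl} for nonnegative $f\in L^1(h)$ directly by Fubini-Tonelli: writing
\[
\int_\Rd \tilde P_t f(x)\, h(x)\,dx
=\int_\Rd\!\int_\Rd \tilde p(t,x,y)\, f(y)\,dy\, h(x)\,dx
=\int_\Rd f(y)\Big(\!\int_\Rd \tilde p(t,y,x)\, h(x)\,dx\Big)dy,
\]
and using symmetry of $\tilde p$ together with \eqref{eq:2} to collapse the inner integral to $h(y)$. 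This immediately yields \eqref{e.cl} for $f\ge 0$ and, as a byproduct, shows that $\tilde P_t|f|(x)<\infty$ for a.e.\ $x$ whenever $f\in L^1(h)$ (since $h>0$ on $\Rdz$), so that $\tilde P_tf:=\tilde P_tf_+-\tilde P_tf_-$ is well-defined a.e.\ for any $f\in L^1(h)$, by linearity extending \eqref{e.cl} to the signed case.

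Contractivity on $L^1(h)$ is then an instant consequence: by the triangle inequality $|\tilde P_tf|\le \tilde P_t|f|$, so
\[
\|\tilde P_tf\|_{L^1(h)}\le \int_\Rd \tilde P_t|f|(x)\,h(x)\,dx=\int_\Rd |f(y)|\,h(y)\,dy=\|f\|_{L^1(h)}.
\]

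For the semigroup property, I would use Chapman-Kolmogorov \eqref{eq:ChK} to verify $\tilde P_{s+t}=\tilde P_s\tilde P_t$ on $L^1(h)$. For nonnegative $f$, Fubini-Tonelli gives
\[
\tilde P_s(\tilde P_tf)(x)=\int_\Rd\!\int_\Rd \tilde p(s,x,z)\tilde p(t,z,y)\,dz\,f(y)\,dy
=\int_\Rd \tilde p(s+t,x,y)f(y)\,dy=\tilde P_{s+t}f(x),
\]
and the general case follows by the usual decomposition $f=f_+-f_-$.

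There is no real obstacle here; the only care needed is to justify the application of Fubini and the decomposition into positive and negative parts for $f\in L^1(h)$ (which may not lie in $L^1(H)$, the space where \eqref{e.dtp} was originally introduced). This is handled automatically by the invariance computation, which guarantees a.e.\ finiteness of $\tilde P_t|f|$ and thereby legitimizes the definition of $\tilde P_tf$ on the whole of $L^1(h)$.
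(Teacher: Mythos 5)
Your proof is correct and follows essentially the same route as the paper's: Fubini--Tonelli combined with the symmetry of $\tilde p$ and the invariance identity \eqref{eq:2} for nonnegative $f$, then the decomposition $f=f_+-f_-$ for the signed case, with the semigroup property read off from Chapman--Kolmogorov \eqref{eq:ChK}. The only addition you make beyond the paper's terse argument is to spell out why $\tilde P_t f$ is a.e.\ well-defined for $f\in L^1(h)$ (rather than merely $f\in L^1(H)$ as in \eqref{e.dtp}), which is a reasonable extra remark but is already implicit in the paper's Tonelli computation.
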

\begin{proof}
The semigroup property of $\tilde P_t$ follows from \eqref{eq:ChK}.
Let $ f   \ge  0$ be a measurable function. By Fubini-Tonelli, the symmetry of $\tilde p$ and \eqref{eq:2}, 
\begin{align*}
 \int_\Rd \tilde P_t  f  (x)  h(x) \ud x & = \int_\Rd \int_\Rd \tilde p(t, x,y)  f  (y) h(x) \ud y \ud x
    = \int_\Rd h(y)  f  (y) \ud y.
\end{align*}
For arbitrary $ f \in L^1(h)$ we write $ f = f _+- f _-$ and use the nonnegative case. 
\end{proof}

\begin{lemma}\label{contr.L1}
$\{P_t\}_{t>0}$ is a strongly continuous contraction semigroup on $L^1(h)$. 
\end{lemma}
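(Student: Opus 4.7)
The semigroup law $P_t\circ P_s=P_{t+s}$ is immediate from the Chapman-Kolmogorov relation \eqref{eq:ChKforp}, and strong continuity on $(0,\infty)$ will follow from continuity at $t=0^+$ via the semigroup property. So the plan has two substantive parts: the contraction estimate on $L^1(h)$, and the approach-to-identity at $t=0^+$.

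For contraction, the key point is to prove $P_th(x)\le h(x)$ pointwise. I would exploit the integral representation \eqref{h_beta}, namely $h(x)=\int_0^\infty f_\delta(s)p_s(x)\ud s$. By Fubini-Tonelli together with \eqref{eq:ChKforp},
\[
P_th(x)=\int_0^\infty f_\delta(s)\,p_{t+s}(x)\ud s=\int_t^\infty f_\delta(u-t)\,p_u(x)\ud u.
\]
Because $\delta\le(d-\alpha)/2$ yields an exponent $(d-\alpha-\delta)/\alpha\ge 0$, the function $f_\delta$ is non-decreasing on $[0,\infty)$, so $f_\delta(u-t)\le f_\delta(u)$ and $P_th\le h$ follows. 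Contraction on $L^1(h)$ is then a one-line consequence of Fubini and the symmetry of $p$:
\[
\|P_tf\|_{L^1(h)}\le\int_{\Rd}P_t|f|(x)\,h(x)\ud x=\int_{\Rd}|f(y)|\,P_th(y)\ud y\le\|f\|_{L^1(h)}.
\]

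For strong continuity at $0^+$, I would pass to a dense subclass, say $C_c^\infty(\Rd)$, which is dense in $L^1(h)$ because $h$ is locally integrable ($\delta<d$) and decays at infinity. Fix $f\in C_c^\infty$ with $\supp f\subset B(0,R)$. Pointwise convergence $P_tf(x)\to f(x)$ as $t\to 0^+$ is standard. To upgrade to $L^1(h)$ convergence I construct a uniform-in-$t$ $L^1(h)$-majorant: the bound \eqref{eq:oppt} gives, for $t\le 1$,
\[
|P_tf(x)|\le C_f\bigl(\mathbf{1}_{|x|\le 2R}+|x|^{-d-\alpha}\mathbf{1}_{|x|>2R}\bigr),
\]
and this dominant lies in $L^1(h)$ since $\int_{|x|\le 2R}|x|^{-\delta}\ud x<\infty$ (using $\delta<d$) and $\int_{|x|>2R}|x|^{-d-\alpha-\delta}\ud x<\infty$ (using $\alpha+\delta>0$). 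Dominated convergence then yields $\|P_tf-f\|_{L^1(h)}\to 0$. A standard triangle-inequality density argument, combined with the $L^1(h)$-contraction already proved, extends the convergence to all $f\in L^1(h)$.

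The main obstacle is the contraction estimate $P_th\le h$: directly verifying it via the action of $\Delta^{\alpha/2}$ on $|x|^{-\delta}$ or abstract subordination would be awkward, but the integral representation \eqref{h_beta} reduces the problem to the elementary monotonicity of a power function. Everything else fits the familiar density-plus-approximation template for strong continuity of kernel semigroups on weighted $L^1$ spaces.
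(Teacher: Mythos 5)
Your proof is correct, and the contraction half takes a genuinely different route from the paper's. The paper deduces $L^1(h)$-contractivity of $P_t$ from the already-established contractivity of $\tilde P_t$ (Lemma \ref{contraction}), using the pointwise comparison $p\le\tilde p$; this is short but relies on the perturbation machinery and the invariance \eqref{eq:2}. You instead prove the supermedian property $P_t h\le h$ directly, by noting that \eqref{h_beta} and Chapman--Kolmogorov give $P_t h(x)=\int_t^\infty f_\delta(u-t)p_u(x)\ud u$ and that $f_\delta$ is non-decreasing because $\delta\le(d-\alpha)/2<d-\alpha$; contraction then follows from Fubini and symmetry. This is more self-contained and isolates the elementary monotonicity that really drives the estimate. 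One small remark: the representation \eqref{h_beta} is stated for $\beta\in(0,d)$, so your monotonicity argument formally applies only for $\delta>0$; the case $\delta=0$ (where $h\equiv 1$ and $P_t 1=1$ since $p_t$ is a probability density) should be noted as trivial. For the strong-continuity half you follow essentially the same density-plus-approximation template as the paper, differing only in cosmetics: the paper approximates $fh$ in $L^1$ by test functions in $C_c^\infty(\Rdz)$ and then splits the integral over $\{|x|\le K\}$ and $\{|x|>K\}$, whereas you take $C_c^\infty(\Rd)$ and conclude by dominated convergence with the uniform majorant $C_f(\mathbf{1}_{|x|\le 2R}+|x|^{-d-\alpha}\mathbf{1}_{|x|>2R})\in L^1(h)$. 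Both are fine; your majorant argument is marginally cleaner, and using $C_c^\infty(\Rd)$ rather than $C_c^\infty(\Rdz)$ is harmless here because $h$ is locally integrable.
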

\begin{proof}
Since $p\le\tp$, for nonnegative function $ f $ and $t>0$ we get
$\|P_t  f \|_{L^1(h)}\le \|\tP_t  f \|_{L^1(h)}\le \| f \|_{L^1(h)}$, so the contractivity follows as in the proof of Lemma~\ref{contraction}.
 
By the contractivity and the semigroup property of $P_t$,  it suffices to prove strong continuity as $t\to 0$. Let $ f  \in L^1(h)$. Then $g:= f  h\in L^1$. There are functions $g_n \in C_c^{\infty } \left( \Rdz \right)$ such that $\|g-g_n\|_{L^1} \rightarrow 0$ as $n\rightarrow \infty$. Let $ f _n=g_n/h$. Of course, $ f _n\in C_c^\infty(\Rdz)$ and $\| f - f _n\|_{L^1(h)}=\|g-g_n\|_{L^1}$ for every $n$, and, by the contractivity, we have
\begin{align*}
   \|P_t  f  -  f  \|_{L^1(h)} & \le  \|   P_t  f -P_t  f _n\|_{L^1(h)} + \| P_t  f _n -  f _n\|_{L^1(h)}+\|  f _n -  f \|_{L^1(h)}  \\
   &\le 2\|   f - f _n\|_{L^1(h)} + \| P_t  f _n -  f _n\|_{L^1(h)}.
\end{align*}
Therefore, it suffices to prove that $ \|P_t  f  -  f  \|_{L^1(h)}\to 0$ as $t\to 0$
for every $ f \in C_c^{\infty } \left( \Rdz \right)$. To this end take
$K\in (0,\infty)$ such that
$\text{supp }  f   \subset B(0, K/2)$. Then,
\begin{align*}
  \|P_t  f  &-  f  \|_{L^1(h)} \le \int_{|x|  \le  K} | P_t  f  (x) -  f  (x) | h(x) \ud x + \int_{|x| >K} P_t| f | (x)h(x) \ud x.
\end{align*}
When $t\to 0$, the above converges to $0$ because $P_t  f  \to  f $ uniformly, $h$ is locally integrable and $P_t| f |(x)\le ct(1+|x|)^{-d-\al}$ on $B(0,K)^c$, see \eqref{eq:oppt}. The proof is complete.

\end{proof}

\begin{lemma}\label{strong1}
$\{\tilde P_t\}_{t>0}$ is a strongly continuous contraction semigroup on $L^1(h)$.
\end{lemma}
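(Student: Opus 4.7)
The semigroup property and contractivity of $\{\tilde P_t\}_{t>0}$ on $L^1(h)$ are already established in Lemma~\ref{contraction}. It remains to verify strong continuity, and by the contractivity and the semigroup property it suffices to show $\|\tilde P_t f - f\|_{L^1(h)} \to 0$ as $t \to 0^+$ for every $f \in L^1(h)$. I would split this using the triangle inequality:
\begin{equation*}
\|\tilde P_t f - f\|_{L^1(h)} \le \|\tilde P_t f - P_t f\|_{L^1(h)} + \|P_t f - f\|_{L^1(h)},
\end{equation*}
where the second term tends to $0$ by Lemma~\ref{contr.L1}. Thus the whole game is to prove that $\|\tilde P_t f - P_t f\|_{L^1(h)} \to 0$.

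Next I would reduce to nonnegative $f$ by writing $f = f_+ - f_-$ with $f_\pm \in L^1(h)$. For $f \ge 0$ both $\tilde P_t f$ and $P_t f$ are nonnegative, and since $p \le \tilde p$ pointwise (by \eqref{def_p_tilde}, as $\tilde p$ is the sum of nonnegative kernels with $\tilde p_0 = p$), we have $\tilde P_t f - P_t f \ge 0$. Therefore
\begin{equation*}
\|\tilde P_t f - P_t f\|_{L^1(h)} = \int_{\Rd}(\tilde P_t f - P_t f)(x)\, h(x)\,\ud x = \int_{\Rd} f h\,\ud x - \int_{\Rd} P_t f\cdot h\,\ud x,
\end{equation*}
where in the last step I use \eqref{e.cl} from Lemma~\ref{contraction} to evaluate the integral of $\tilde P_t f$ against $h$. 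The strong continuity of $P_t$ on $L^1(h)$ (Lemma~\ref{contr.L1}) gives $P_t f \to f$ in $L^1(h)$, hence $\int P_t f \cdot h\,\ud x \to \int f h\,\ud x$, so the right-hand side tends to $0$.

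For general $f$, applying the nonnegative case to $f_+$ and $f_-$ separately yields
\begin{equation*}
\|\tilde P_t f - P_t f\|_{L^1(h)} \le \|\tilde P_t f_+ - P_t f_+\|_{L^1(h)} + \|\tilde P_t f_- - P_t f_-\|_{L^1(h)} \longrightarrow 0,
\end{equation*}
completing the proof. The key conceptual point, and the only place where something more than routine semigroup calculus is required, is to leverage the invariance \eqref{eq:2} of $h$ under $\tilde P_t$ together with the domination $\tilde p \ge p$ to turn an $L^1(h)$-norm of a difference into a difference of integrals, so that mass conservation and the already-established strong continuity of the unperturbed semigroup do the rest. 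I do not expect a substantive obstacle beyond being careful with Fubini/Tonelli when splitting $f = f_+ - f_-$.
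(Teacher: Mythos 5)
Your proof is correct, and it takes a genuinely different route from the paper's, even though both land on the same key identity. The paper unfolds the Duhamel formula \eqref{eq:Df2} to write $\tilde P_t f - P_t f$ as an explicit triple integral $I_t$, then integrates out $x$ via the invariance \eqref{eq:2} and invokes an auxiliary lemma from \cite{MR3933622} to recognize $\int_0^t\int p(t-s,z,y)h(z)q(z)\ud z\ud s = h(y)-P_t h(y)$, finally concluding $I_t = \|f\|_{L^1(h)}-\|P_t f\|_{L^1(h)}\to 0$. You reach $\|\tilde P_t f - P_t f\|_{L^1(h)} = \|f\|_{L^1(h)}-\|P_t f\|_{L^1(h)}$ without opening up Duhamel at all: the pointwise domination $p\le\tilde p$ (immediate from the perturbation series \eqref{def_p_tilde}) makes $\tilde P_t f - P_t f\ge 0$ for $f\ge 0$, so the $L^1(h)$-norm of the difference is simply the difference of the two $h$-weighted integrals, and \eqref{e.cl} evaluates the $\tilde P_t$-integral. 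Your version is leaner — it replaces the Duhamel expansion plus the external reference with a one-line positivity observation and already-proved mass conservation — at the cost of leaning on the monotonicity $\tilde p\ge p$, which is specific to nonnegative perturbations, whereas the paper's Duhamel-based computation would survive in settings where such pointwise domination fails.
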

\begin{proof} 
The contractivity is resolved in Lemma~\ref{contraction}.
Since $P_t$  is strongly continuous as $t\to 0$, by \eqref{eq:Df2} it suffices to consider nonnegative $ f \in L^1(h)$ and verify that

$$
  I_t := \int_{\bbfR^d} \int_{\bbfR^d} \int_0^t \int_{\bbfR^d} \tilde p(s, x,z)  q(z)p({t-s},z,y) f  (y)\ud z \ud s \ud y h(x) \ud x \to 0
$$
as $t\to 0$.
By \eqref{eq:2} and \cite[Lemma 3.3]{MR3933622},
\begin{align*}
I_t&=\int_{\bbfR^d} \int_0^t \int_{\bbfR^d} h(z)q(z)p({t-s}, z,y)  f  (y) \ud z \ud s \ud y  \\
&=\int_{\bbfR^d} \big(h(y)- P_t h(y)\big)  f  (y) \ud y =\|f\|_{L^1(h)}-\|P_t f \|_{L^1(h)} \to 0,
\end{align*}
indeed, because of Lemma~\ref{contr.L1}.
\end{proof}
We consider the resolvent operators $R_\lambda = \int_0^{\infty } e^{-\lambda t}P_t \ud t$ and $\tilde R_\lambda = \int_0^{\infty } e^{-\lambda t}\tP_t \ud t$ for $\lambda>0$. The contractivities of $P_t$  and $\tP_t$ yield the following result.
\begin{corollary}
$\lambda \tilde R_\lambda $ and $\lambda R_\lambda $ are contractions on $L^1(h)$.
\end{corollary}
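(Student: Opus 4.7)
The plan is to derive the contractivity of $\lambda \tilde R_\lambda$ and $\lambda R_\lambda$ on $L^1(h)$ directly from the contractivity of the underlying semigroups $\tilde P_t$ and $P_t$ on $L^1(h)$, which has already been established in Lemmas~\ref{contraction} and~\ref{contr.L1}. The probability measure $\lambda e^{-\lambda t}\,\mathrm{d}t$ on $(0,\infty)$ makes the resolvent a convex average of contractions, so the result should follow from Minkowski's integral inequality (or from Fubini--Tonelli applied to $|f|$).

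Concretely, I would first consider $f\in L^1(h)$ nonnegative, so that everything is well-defined pointwise by Tonelli. Writing
\begin{equation*}
\|\lambda \tilde R_\lambda f\|_{L^1(h)}
=\int_{\Rd}\Big(\int_0^\infty \lambda e^{-\lambda t}\tilde P_t f(x)\,\mathrm{d}t\Big)h(x)\,\mathrm{d}x,
\end{equation*}
Fubini--Tonelli lets me swap the two integrals and then apply Lemma~\ref{contraction}, which gives $\|\tilde P_t f\|_{L^1(h)}\le \|f\|_{L^1(h)}$ (in fact equality \eqref{e.cl}), to obtain
\begin{equation*}
\|\lambda \tilde R_\lambda f\|_{L^1(h)}
=\int_0^\infty \lambda e^{-\lambda t}\|\tilde P_t f\|_{L^1(h)}\,\mathrm{d}t
\le \|f\|_{L^1(h)}\int_0^\infty \lambda e^{-\lambda t}\,\mathrm{d}t=\|f\|_{L^1(h)}.
\end{equation*}
For general $f\in L^1(h)$, I would split $f=f_+-f_-$ and dominate $|\tilde R_\lambda f|\le \tilde R_\lambda |f|$, reducing to the nonnegative case. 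The argument for $R_\lambda$ is identical, using Lemma~\ref{contr.L1}.

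There is essentially no obstacle: the only thing to be careful about is justifying the interchange of integrals, which is legitimate because $\tilde p(t,x,y)\ge 0$, $e^{-\lambda t}\ge 0$ and $h\ge 0$, so Tonelli applies unconditionally to $|f|$. Strong continuity plays no role here, only the semigroup contractivity on $L^1(h)$ and the fact that $\lambda e^{-\lambda t}\,\mathrm{d}t$ is a probability measure on $(0,\infty)$.
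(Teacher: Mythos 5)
Your proof is correct and is exactly what the paper intends: the paper gives no separate argument for this corollary beyond noting that "the contractivities of $P_t$ and $\tilde P_t$ yield the following result," and your Fubini--Tonelli computation with the probability measure $\lambda e^{-\lambda t}\,\mathrm{d}t$ is the standard way to make that one-line remark precise.
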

Let 
${\mathcal L}$
 be the $L^1(h)$-generator of the semigroup $P_t$, with domain ${\mathcal D}$, and let 
 $\tilde{\mathcal L}$
 be the $L^1(h)$-generator of the semigroup $\tilde P_t$, with domain $\tilde {\mathcal D}$. 
The following result is consistent with \eqref{eq:SchrOp}.
\begin{proposition}
We have ${\mathcal D}\subset \tilde {\mathcal D}$ and 
$\tilde {\mathcal L}  f  = {\mathcal L} f  + q  f $
 for $ f   \in {\mathcal D}$. 
\end{proposition}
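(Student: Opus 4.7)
The natural proof proceeds by differentiating the Duhamel identity \eqref{eq:Df2} at $t = 0^+$. Integrating \eqref{eq:Df2} against $f \in L^1(h)$ and applying Fubini--Tonelli, which is legitimate by nonnegativity and the upper bound \eqref{eq:mainThmEstz}, yields the operator identity
\begin{equation*}
\tilde P_t f = P_t f + \int_0^t \tilde P_s\bigl(q\, P_{t-s} f\bigr)\, ds, \qquad t > 0,
\end{equation*}
valid in $L^1(h)$. For $f \in \mathcal{D}$, dividing by $t$ gives
\begin{equation*}
\frac{\tilde P_t f - f}{t} \;=\; \frac{P_t f - f}{t} \;+\; \frac{1}{t}\int_0^t \tilde P_s\bigl(q\, P_{t-s} f\bigr)\, ds.
\end{equation*}
The first term on the right converges to $\mathcal{L}f$ in $L^1(h)$ by the definition of $\mathcal{D}$. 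It therefore suffices to show that the integral converges to $qf$ in $L^1(h)$ as $t \to 0^+$; this simultaneously establishes $qf \in L^1(h)$, the inclusion $f \in \tilde{\mathcal{D}}$, and the identity $\tilde{\mathcal{L}}f = \mathcal{L}f + qf$.

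To handle the integral I would split
\begin{equation*}
\tilde P_s(q P_{t-s} f) - qf \;=\; \tilde P_s\bigl(q (P_{t-s} f - f)\bigr) \;+\; \bigl(\tilde P_s(qf) - qf\bigr)
\end{equation*}
and use the $L^1(h)$-contractivity of $\tilde P_s$ from Lemma~\ref{strong1}, reducing the problem to proving \textbf{(i)} $\|q(P_r f - f)\|_{L^1(h)} \to 0$ as $r \to 0^+$ and \textbf{(ii)} $\|\tilde P_s(qf) - qf\|_{L^1(h)} \to 0$ as $s \to 0^+$. Claim (ii) is immediate from the strong continuity of $\tilde P_s$ in Lemma~\ref{strong1}, \emph{provided} $qf \in L^1(h)$; claim (i) is a statement about the multiplier $q$ intertwining with $P_r$.

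The principal obstacle is therefore the a priori integrability $qf \in L^1(h)$ for arbitrary $f \in \mathcal{D}$. I would circumvent it by first proving the proposition on the smaller subspace $C_c^\infty(\mathbb{R}_0^d) \subset \mathcal{D}$: for such $f$, $qf$ is bounded with compact support in $\mathbb{R}_0^d$, so $qf \in L^1(h)$; claim (i) follows from uniform convergence $P_r f \to f$ on compacts together with the pointwise tail bound \eqref{eq:oppt} and dominated convergence against the locally integrable weight $h$ (recall $\alpha + \delta < d$); and claim (ii) reduces to strong continuity. This establishes $\tilde{\mathcal{L}}f = \mathcal{L}f + qf$ on $C_c^\infty(\mathbb{R}_0^d)$.

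The final step is extension to all $f \in \mathcal{D}$. Using that $C_c^\infty(\mathbb{R}_0^d)$ is a core for $\mathcal{L}$ on $L^1(h)$, pick $f_n \in C_c^\infty(\mathbb{R}_0^d)$ with $f_n \to f$ and $\mathcal{L}f_n \to \mathcal{L}f$ in $L^1(h)$; choosing the approximants carefully (e.g.\ via mollification that preserves the scaling of $h$) one also has $qf_n \to qf$ in $L^1(h)$, since $qh$ is locally integrable on $\mathbb{R}^d$. The closedness of $\tilde{\mathcal{L}}$, which holds because $\tilde P_t$ is a strongly continuous semigroup on $L^1(h)$ by Lemma~\ref{strong1}, then gives $f \in \tilde{\mathcal{D}}$ and $\tilde{\mathcal{L}}f = \mathcal{L}f + qf$. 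The delicate point in the whole plan is precisely this core/approximation step, which must be arranged so that $qf_n$ is $L^1(h)$-Cauchy.
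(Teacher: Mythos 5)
Your route is genuinely different from the paper's: you differentiate the Duhamel identity at $t=0^+$, whereas the paper integrates it against $e^{-\lambda t}\,\ud t$ to obtain the resolvent identity $\tilde R_\lambda = R_\lambda + \tilde R_\lambda\,q\,R_\lambda$, then shows $qR_\lambda$ is bounded on $L^1(h)$ (an explicit computation using \cite[Lemma~3.3]{MR3933622}), and reads off $\mathcal{D}=R_\lambda(L^1(h))\subset \tilde R_\lambda(L^1(h))=\tilde{\mathcal{D}}$ and the formula for $\tilde{\mathcal{L}}$ purely algebraically. The differential approach is more concrete and perhaps more intuitive, but the resolvent approach avoids precisely the delicate limiting arguments you are forced to confront.

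However, as you yourself flag, there is a genuine gap in the last step, and it is not a cosmetic one — it is the entire content of the proposition. For general $f\in\mathcal{D}$ you must establish a priori that $qf\in L^1(h)$, or equivalently an estimate of the form $\|qf\|_{L^1(h)}\le C\big(\|f\|_{L^1(h)}+\|\mathcal{L}f\|_{L^1(h)}\big)$, and nothing in your argument produces it. Proving the identity on $C_c^\infty(\mathbb{R}^d_0)$ is fine, but the extension step requires (i) that $C_c^\infty(\mathbb{R}^d_0)$ is a core for $\mathcal{L}$ on $L^1(h)$, which is plausible but not stated in the paper (Lemma~\ref{contr.L1} only gives density in $L^1(h)$, not graph density — $P_t$ and $R_\lambda$ do not preserve $C_c^\infty(\mathbb{R}^d_0)$, and truncating the approximants away from the origin can spoil control of $\mathcal{L}f_n$ because $\Delta^{\alpha/2}$ is nonlocal); and (ii) that $qf_n$ is $L^1(h)$-Cauchy along the approximating sequence, which is exactly the missing a priori bound in disguise. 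The paper supplies this bound in one stroke: $\|qR_\lambda g\|_{L^1(h)}\le\|g\|_{L^1(h)}$, obtained by writing $e^{-\lambda t}=\int_t^\infty \lambda e^{-\lambda s}\,\ud s$ and invoking \cite[Lemma~3.3]{MR3933622}. Some version of that computation — or an equivalent quantitative Hardy-type inequality in $L^1(h)$ — would be needed to close your proof, and without it the argument is incomplete.
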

\begin{proof}
Let $\lambda>0$. Recall that $ f  \in {\mathcal D}$ if and only if $ f  = R_\lambda g$ for some (unique) $g\in L^1(h)$ and then $
{\mathcal L}\varphi 
= \lambda R_\lambda g-g$.
The same holds for $\tilde R_\lambda $. 
By integrating \eqref{eq:Df2} with respect to $e^{-\lambda t} \ud t$ and using Fubini-Tonelli and the following self-explanatory notation we get 
\begin{align*}
    \tilde R_\lambda (x,y) &= R_\lambda (x, y) + \int_0^{\infty } e^{-\lambda t } \int_0^t \int_{\R^d} \tilde p(s,x,z) q(z) p(t-s,z,y) \ud z \ud s\ud t \\
    & = R_\lambda (x, y) + \int_0^{\infty }  \int_s^{\infty }\int_{\R^d}  e^{-\lambda s }\tilde p(s,x,z) q(z) e^{-\lambda (t -s)}p(t-s,z,y) \ud z \ud t\ud s \\
   & = R_\lambda (x, y) + \int_{\R^d}\tilde R_{\lambda } (x, z) q(z) R_{\lambda } (z, y) \ud z,\quad x,y \in \Rd.
\end{align*}
Accordingly, for $ f   \ge  0$, we get 
$ \tilde R_{\lambda }  f  = R_{\lambda }  f  + \tilde R_{\lambda } q R_{\lambda }  f $.
In particular, $\tilde R_{\lambda } q R_{\lambda }  f  \in L^1(h)$ if $ f  \in L^1(h)$. 
Moreover, $q R_{\lambda }$ is a bounded operator on $L^1(h)$.
Indeed, if $0\le  f  \in L^1(h)$, then 
using Fubini-Tonelli,  the identity $e^{-\lambda t}=\int_t^\infty \lambda e^{-\lambda s}ds$ and \cite[Lemma 3.3]{MR3933622}, we get 
\begin{align*}
0&\le \int_\Rd q(x)R_\lambda  f  (x)h(x) \ud x = \int_\Rd \int_{\Rd}\int_0^\infty e^{-\lambda t} p(t,x,y) q(x)h(x)   f  (y) \ud t\ud y \ud x \\
&= \int_0^\infty \int_\Rd \lambda e^{-\lambda s}  f  (y) \left(\int_0^s\int_{\Rd} p(t,x,y) q(x)h(x)  \ud x \ud t \right) \ud y \ud s \\
&\le \int_0^\infty \lambda e^{-\lambda s}\ud s \int_\Rd h(y)  f  (y) \ud y   = \int_\Rd h(y)  f  (y) \ud y.
\end{align*}
It follows that $R_{\lambda } = \tilde R_{\lambda } (I  - qR_{\lambda })$ on $L^1(h)$, and so
\begin{equation*}
   {\mathcal D} = R_{\lambda } \big( L^1(h) \big) \subset \tilde R_{\lambda } \big( L^1(h)\big) = \tilde{\mathcal D}.
\end{equation*}
To prove that 
$\tilde {\mathcal L} \varphi ={\mathcal L}\varphi +q$ 
for $\varphi \in \mathcal{D}$, we let 
$\psi ={\mathcal L}\varphi$, 
$\varphi, \psi \in L^1(h)$, that is $\varphi =\lambda R_\lambda  f  -R_\lambda \psi$. It is enough to prove that 
$\tilde {\mathcal L} f  = \psi +q f $, 
that is 
\begin{equation}\label{vf_psi}
	 f  = \lambda \tilde R_\lambda  f  -\tilde R_\lambda q f  .
	\end{equation}
But the right-hand-side of \rf{vf_psi} is
\begin{align*}
	\lambda R_\lambda  f  +\tilde R_\lambda \lambda q R_\lambda  f  -R_\lambda \psi -R_\lambda qR_\lambda \psi -\tilde R_\lambda q f  = f  +\tilde R_\lambda q  f  -\tilde R_\lambda q  f  = f .
	\end{align*}
The proof is complete. 
\end{proof}
Recall the notation of \rf{normqh}.
Our proof of the large-time asymptotics of $\tilde P_t$ hinges on the following hypercontractivity result (for $q=1$ see Lemma \ref{contraction} and for $q=\infty$ see Lemma~\ref{l.qinf}).
\begin{theorem}\label{hyperc}
If $1<q< \infty $	then for all $t>0$ and nonnegative functions $f$ on $\Rd$,
	\begin{equation}\label{w norm}
		\|\tilde P_t f\|_{q,h} \le  Ct^{-\frac{d-2\delta}{\alpha}(1-\frac{1}{q})} \|f\|_{L^1(h)}+Ct^{-\frac{d-2\delta}{\alpha}(1-\frac{1}{q})-\frac{\delta}{\alpha}}\|f\|_{L^1(\R^d)}.
	\end{equation}
\end{theorem}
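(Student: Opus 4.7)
By linearity, assume $f\ge 0$; the task is to bound $\|\tilde P_t f\|_{q,h}^q = \int (\tilde P_t f)^q h^{2-q}\,dx$. The starting point is the sharp two-sided estimate \eqref{eq:mainThmEstz2}, $\tilde p(t,x,y)\lesssim p_t(x-y)\,H(t^{-1/\alpha}x)\,H(t^{-1/\alpha}y)$, combined with the elementary bound $H(t^{-1/\alpha}y)\le 1+t^{\delta/\alpha}h(y)$. This yields the pointwise domination
$$\tilde P_t f(x) \;\lesssim\; H(t^{-1/\alpha}x)\,\bigl[P_t f(x) + t^{\delta/\alpha}\,P_t(hf)(x)\bigr].$$
By Minkowski, it suffices to bound $\mathcal I(g):=\|H(t^{-1/\alpha}\cdot)\,P_tg\|_{q,h}$ for nonneg $g$, in terms of $\|g\|_1$ alone: applying with $g=hf$ will produce the $\|f\|_{L^1(h)}$ term (since $\|hf\|_1=\|f\|_{L^1(h)}$), and with $g=f$ the $\|f\|_1$ term (the additional factor $t^{\delta/\alpha}$ ending up in the exponent).

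To estimate $\mathcal I(g)^q$, I split $\Rd = A\cup A^c$ with $A=\{|x|\le t^{1/\alpha}\}$. On $A$ one has $H(t^{-1/\alpha}x)=t^{\delta/\alpha}h(x)$, so the composite weight collapses to $H^qh^{2-q}=t^{q\delta/\alpha}|x|^{-2\delta}$, integrable near the origin because $2\delta<d$ gives $\int_A|x|^{-2\delta}dx\asymp t^{(d-2\delta)/\alpha}$. Combined with $\|P_tg\|_\infty\le Ct^{-d/\alpha}\|g\|_1$ this yields
$$\int_A H^q(t^{-1/\alpha}x)(P_tg)^q h^{2-q}\,dx \;\le\; C\,t^{q\delta/\alpha-qd/\alpha+(d-2\delta)/\alpha}\,\|g\|_1^q,$$
and a routine rearrangement of exponents shows the $q$-th root matches the announced $t$-power.

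On $A^c$, $H(t^{-1/\alpha}x)=1$ and the weight is $|x|^{\delta(q-2)}$. For $q\le 2$ this is bounded and the previous estimate carries over directly. For $q>2$ the weight grows at infinity and the pure $L^\infty$ estimate is insufficient; this is the main obstacle. Here I exploit the spatial decay $p_t(x-y)\le Ct|x-y|^{-d-\alpha}$, valid whenever $|x-y|>t^{1/\alpha}$. Splitting $P_tg(x)=\int_{|y|\le|x|/2}+\int_{|y|>|x|/2}$: on the first piece $|x-y|\gtrsim|x|>t^{1/\alpha}$, so $p_t(x-y)\lesssim t/|x|^{d+\alpha}$ contributes algebraic decay in $|x|$ more than enough to outbalance $|x|^{\delta(q-2)}$ since $\delta(q-2)-q(d+\alpha)+d<0$; on the second piece $|y|\sim|x|$ and a change of variables reduces matters to Young's inequality $\|P_tg\|_q\le\|p_t\|_q\|g\|_1\asymp t^{-d(1-1/q)/\alpha}\|g\|_1$ against a locally bounded weight.

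Assembling the $A$ and $A^c$ contributions, taking $q$-th roots, and applying once with $g=f$ and once with $g=hf$, the powers of $t$ collapse (via $qa=(q-1)(d-2\delta)/\alpha$ and $\delta/\alpha=q\cdot\delta/(q\alpha)$) to the two exponents $-a$ and $-a-\delta/(q\alpha)$ claimed in the theorem. The delicate point in the bookkeeping is that on the outer region $A^c$ the $|x|^{\delta(q-2)}$ weight forces the use of the $p_t$ decay rather than its sup, and one has to verify that the correction $\delta/(q\alpha)$ (and not the coarser $\delta/\alpha$ that a one-region estimate would give) is enough to absorb the $t^{\delta/\alpha}$ prefactor in front of $P_t(hf)$.
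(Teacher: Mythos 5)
Your strategy differs genuinely from the paper's. The paper establishes the case $t=1$ by duality, splitting $\int_{\Rd}\tilde P_1f(x)\,g(x)\,h^{2-q}(x)\,dx$ into four pieces according to whether the spatial variables lie in $B(0,1)$ or its complement, using H\"older's inequality for the first three and a Hardy--Littlewood rearrangement inequality for the piece where both are in $B(0,1)^c$; the general $t$ then follows by scaling. You instead work pointwise from \eqref{eq:mainThmEstz2}, reduce to bounding $\mathcal I(g):=\|H(t^{-1/\alpha}\cdot)\,P_tg\|_{q,h}$ in terms of $\|g\|_1$, and split $\Rd$ at $|x|=t^{1/\alpha}$. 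For $1<q\le2$ this is a correct and arguably more elementary alternative, since the weight $|x|^{\delta(q-2)}$ is bounded on $A^c$ and the pure $L^\infty$-bound $\|P_tg\|_\infty\le Ct^{-d/\alpha}\|g\|_1$ suffices there too.

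For $q>2$, however, the $A^c$ estimate has a genuine gap. On the piece $\{|y|>|x|/2\}$, after absorbing $|x|^{\delta(q-2)/q}\le(2|y|)^{\delta(q-2)/q}$ and applying Young, you are left with $\|p_t\|_q\int_{\Rd}|y|^{\delta(q-2)/q}|g(y)|\,dy$; since $\delta(q-2)/q>0$ this $y$-weight grows at infinity and is \emph{not} controlled by $\|g\|_1$. There is no ``locally bounded weight'' once $x$ ranges over all of $A^c$, and the exponent bookkeeping does not deliver the advertised $\delta/(q\alpha)$ either (the $A$ and $A^c$ pieces you do control both produce the same $t^{-a-\delta/\alpha}$). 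In fact the reduction $\mathcal I(g)\lesssim t^{-b}\|g\|_1$ on which the whole plan rests is, via $g=Hf$, equivalent to \eqref{w norm} itself, and it fails for $q>2$, $\delta>0$: take $t=1$ and $g=\indyk_{B(z,1)}$ with $|z|\to\infty$, so that $\|g\|_1$ is fixed while $\mathcal I_1(g)^q\ge c\int_{B(z,1/2)}|x|^{\delta(q-2)}\,dx\gtrsim|z|^{\delta(q-2)}\to\infty$. (The paper's rearrangement step in its fourth term likewise needs $h^{2-q}$ radially decreasing, i.e.\ $q\le2$, so the difficulty is structural rather than specific to your route.)
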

\begin{proof}
We first prove the inequality \rf{w norm} for $t=1$. Let $q'=\frac{q}{q-1}$.  It is enough to verify that for every function $g\ge 0$ on $\R^d$,
	\begin{align}\label{observation}
		\int_{\R^d} \tilde P_1 f(x)g(x)
		h^{2-q}(x) 
		\ud x \le C \|f\|_{L^1(H)}
		 \|g\|_{L^{q'}(h^{2-q})},
	\end{align} 
	since then, by {the duality of $L^p(h^{2-q})$}, we get
	\begin{equation}\label{e.op1}
\|\tilde P_1 f\|_{q,h}
=\|\tilde P_1 f\|_{L^q(h^{2-q})}=\sup_{ \|b\|_{L^{q'}(h^{2-q})}=1}\left|\int_{\R^d} \tilde P_1 f(x)b(x)h^{2-q}(x)\ud x\right|
 \le  C\|f\|_{L^1(H)},
	\end{equation}	
as needed. To prove \rf{observation}, by \eqref{eq:mainThmEstz} 
it is enough to estimate
	\begin{align}
		I(f,g) &:= \int_{\R^d}\int_{\R^d} H(x)H(y)p(1,x,y)f(y)g(x)h^{2-q}(x) \ud   y   \ud x  \nonumber\\
		&=I(f\indyk_B,g\indyk_{B})+I(f\indyk_B,g\indyk_{B^c})+I(f\indyk_{B^c},g\indyk_{B})+I(f\indyk_{B^c},g\indyk_{B^c}),
		\label{e.4t}
	\end{align}
	where $B=B(0,1)\subset \R^d$. By H\"older's inequality,
	\begin{align*}
	I(f\indyk_B,g\indyk_{B}) & \le  \int_B \int_B h(x)h(y)p(1,x,y)f(y)g(x) \ud y  h(x)^{2-q} \ud x  \\
	& \le c \left( \int_B h(y)f(y) \ud y \right) \left( \int_B h(x)g(x) h^{2-q}(x)\ud x  \right)\\
	& \le c \|f\|_{L^1(H)} \left(\int_B g^{q'}(x) h^{2-q}(x)  \ud x \right)^{\frac{1}{q'}}\left( \int_Bh^{2}(x)  \ud x \right)^{\frac{1}{q}}\\
	& \le  C\|f\|_{L^1(H)} \|g\|_{L^{q'}(h^{2-q})},
\end{align*}
where $\int_Bh^{2}(x) \ud x<\infty$, because $2\delta \le  d-\alpha<d$. We next deal with the second term in \eqref{e.4t},
\begin{align} \label{last}
	\nonumber
	I(f\indyk_B,g\indyk_{B^c}) & \le  \int_B \int_{B^c} f(y)h(y)p(1,x,y)g(x) h^{2-q}(x)\ud x  \ud y 
\\ 	\nonumber& \le  c\int_B \int_{B^c} f(y)h(y)|x|^{-d-\alpha}g(x) h^{2-q}(x)\ud x   \ud y  \\
	\nonumber
	& \le c \|f\|_{L^1(H)}\int_{B^c} |x|^{-d-\alpha} g(x) h^{2-q}(x)\ud x   \\
	&\le  c\|f\|_{L^1(H)}\left( \int_{B^c} g^{q'}(x)h^{2-q}(x) \ud x \right)^{\frac{1}{q'}}\left(\int_{B^c}|x|^{-(d+\alpha)q}h^{2-q}(x) \ud x  \right)^{\frac{1}{q}}\\
	\nonumber& \le   C\|f\|_{L^1(H)} \|g\|_{L^{q'}(h^{2-q})}.
\end{align}
	The last integral in \rf{last} is finite because $0\le \delta \le (d-\alpha)/2$, so  $|x|^{-(d+\alpha)q-\delta(2-q)} \le  |x|^{-d-\alpha} \in L^1(B^c,  \ud x )$ for every $q\in (1,+\infty)$. We then estimate the third term,
	\begin{align*}
		I(f\indyk_{B^c},g\indyk_{B})& \le  \int_{B^c}\int_B f(y)h(y)p(1,x,y)g(x) h^{2-q}(x) \ud x   \ud y \\
		&  \le c \int_{B^c} \int_B f(y)h(y)|y|^{-d-\alpha} g(x) h^{2-q}(x)\ud x   \ud y  \\
		& \le c \|f\|_{L^1(H)}\int_B g(x)h^{2-q}(x) \ud x  \\
		& \le  c\|f\|_{L^1(H)}\left(\int_B g^{q'}(x) h^{2-q}(x) \ud x \right)^{\frac{1}{q'}}\left( \int_B h^{2-q}(x)  \ud x  \right)^{\frac{1}{q}}, 
	\end{align*}
	and the last integral is finite, as before.
For the fourth term in \eqref{e.4t} we note that
	\begin{align*}
		\int_{\R^d} p(1,x,y)g(x)h^{2-q}(x)\ud x & \le  \left( \int_{\R^d} g^{q'}(x)h^{2-q}(x) \ud x \right)^{\frac{1}{q'}} \left( \int_{\R^d} p^q(1,x,y) h^{2-q}(x) \ud x  \right)^{\frac{1}{q}}\\
		& \le  \|g\|_{L^{q'}(h^{2-q})}\left( \int_{\R^d} p_1^q (x) h^{2-q}(x)  \ud x \right)^{\frac{1}{q}},
	\end{align*}
	by rearrangement inequalities \cite[Sec.3]{MR1817225}. The last integral is finite, because
	\begin{align*}
		p_1^q(x)h^{2-q}(x) & \le  C|x|^{-\delta},\quad x\in B,\\
		p_1^q(x)h^{2-q}(x) & \le  C|x|^{-(d+\alpha)q -\delta(2-q)}  \le  |x|^{-d-\alpha}, \quad x\in B^c.
	\end{align*}
Thus,
	\begin{align*}
		I(f\indyk_{B^c},g\indyk_{B^c})& \le  \int_{B^c}\int_{B^c} p(1,x,y)f(y)g(x)h^{2-q}(x)  \ud x   \ud y \\
		&  \le  C \|g\|_{L^{q'}(h^{2-q})}\int_{B^c} f(y)\ud y  \le  C\|f\|_{L^1(H)} \|g\|_{L^{q'}(h^{2-q})}.
	\end{align*}
Therefore \eqref{e.4t}	is bounded above by $C\|f\|_{L^1(H)} \|g\|_{L^{q'}(h^{2-q})}$, which yields \eqref{w norm} for $t=1$. 

 By this and a change of variables, for all $t>0$ we get
	\begin{align}
		\|\tilde P_t f\|_{q,h}&=
		t^{\frac{d-\delta(2-q)}{\alpha q}}\| \tilde P_1 f(t^{\nicefrac{1}{\alpha}}\cdot )\|_{q,h} \label{e.spt}
		\\ \nonumber
		& \le  Ct^{\frac{d-\delta(2-q)}{\alpha q}}\left( \| f (t^{\nicefrac{1}{\alpha}}\cdot) \|_{L^1(h)}+\| f(t^{\nicefrac{1}{\alpha}}\cdot )\|_{L^1}\right)\\ \nonumber
		&=Ct^{\frac{d-\delta(2-q)}{\alpha q}}\left( t^{-\frac{d-\delta}{\alpha}}\|f\|_{L^1(h)}+t^{-\frac{d}{\alpha}}\|f\|_{L^1}\right)\\&=Ct^{-\frac{d-2\delta}{\alpha }(1-\frac{1}{q})}\|f\|_{L^1(h)}+Ct^{-\frac{d-2\delta}{\alpha }(1-\frac{1}{q})-\frac{\delta}{\alpha }}\|f\|_{L^1}.\nonumber
	\end{align}	
The proof of \eqref{w norm} is complete. 
\end{proof}
We note in passing that \eqref{w norm} for $t=1$ is equivalent to \rf{e.op1}
but $H$ is incompatible with the scaling \eqref{e.spt}, so
the long form of \eqref{w norm} seems inevitable.
Here is a more trivial bound.
\begin{lemma}\label{l.qinf}
$\|\tilde P_t f\|_{\infty,h}\le \|f\|_{\infty,h}$.
\end{lemma}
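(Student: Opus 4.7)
The plan is to reduce the inequality directly to the invariance identity \eqref{eq:2}. By definition of the norm, we have $|f(y)|\le \|f\|_{\infty,h}\,h(y)$ for almost every $y\in\R^d$. Since $\tilde p(t,x,y)\ge 0$, integrating against $\tilde p(t,x,\cdot)$ yields
\begin{equation*}
  |\tilde P_t f(x)|\le \int_{\R^d}\tilde p(t,x,y)|f(y)|\ud y \le \|f\|_{\infty,h}\int_{\R^d}\tilde p(t,x,y)h(y)\ud y
\end{equation*}
for every $x\in\R^d\setminus\{0\}$. By the invariance \eqref{eq:2}, the last integral equals $h(x)$, so $|\tilde P_t f(x)|/h(x)\le \|f\|_{\infty,h}$ for all such $x$, and taking essential supremum gives the claim. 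There is no real obstacle here; the only point to notice is that the conclusion holds pointwise for $x\in\R^d\setminus\{0\}$ (since $\tilde p(t,x,\cdot)$ is only defined for $x\neq 0$), which is harmless because the weight $h$ and the norm $\|\cdot\|_{\infty,h}$ are taken on $\R^d$ and the singleton $\{0\}$ has measure zero.
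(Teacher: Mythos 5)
Your proof is correct and follows the same approach as the paper: bound $|f|\le \|f\|_{\infty,h}\,h$ a.e., integrate against the nonnegative kernel $\tilde p(t,x,\cdot)$, and invoke the invariance $\tilde P_t h = h$ from \eqref{eq:2}.
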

\begin{proof}
For $x\in \Rdz$ we have $|\tilde P_t f (x)\ud x|\le \|f\|_{\infty,h}\int_\Rd \tilde P_t(x,y)h(y)\ud y
\le h(x)\|f\|_{\infty,h} $.
\end{proof}

\section{Asymptotic behavior for large time}\label{s.ssa}
This section is devoted to the proof of Theorem \ref{lim norm th}. For the sake of 
comparison let us 
discuss the following classical analogue of 
\rf{equation}, 
\begin{equation}\label{lam p}
	\begin{cases}
		\partial_t u(x,t)=\left(\Delta +\kappa  |x|^{-2}\right) u(x,t) , \quad x\in \bbfR^d,\,t>0 ,\\
		u(x,0)=f(x),
	\end{cases}
\end{equation}  
where $d \ge  3$ and $\kappa  \in \bbfR$. As we already mentioned 
in Section \ref{s:Mm}, the Cauchy problem \rf{lam p} was popularized by Baras and Goldstein \cite{MR742415}, who discovered  that \rf{lam p} has no positive local-in-time solutions  if 
$\kappa  >(d-2)^2/4$, which  is called the instantaneous blow up. See also Goldstein and Kombe \cite{GK} for a simple proof via Harnack inequality. 
V\'azquez and Zuazua \cite{MR1760280} then studied the large time behavior of solutions to \rf{lam p} with $0<\kappa   \le  (d-2)^2/4$. Using a weighted version of the Hardy-Poincar\'e inequality, they proved in \cite[Theorem 10.3]{MR1760280} the stabilization of some solutions toward the following self-similar solution of \rf{lam p}
\begin{equation*}
	V(x,t)=t^{\sigma -\frac{d}{2}}|x|^{-\sigma }\se^{-\frac{|x|^2}{4t}},
\end{equation*}
where $\sigma=\frac{d-2}{2}-\sqrt{(d-2)^2/4- \kappa}$. Then, Pilarczyk \cite{MR3020137} proved that if $u=u(t,x)$ is the solution of \rf{lam p} with $\kappa  \in \big(-\infty , (d-2)^2/4\big)$, then for every $1 \le  q \le  \infty$,
\begin{equation*}
	\lim_{t\rightarrow \infty }t^{\frac{d}{2}(1-\frac{1}{q})-\frac{\sigma }{2}}\| u(\cdot ,t)-AV(\cdot , t)\|_{q, \vf_\sigma (t)}=0.
\end{equation*}
Here $f \in L^1(\bbfR^d) \cap L^1(|x|^{-\sigma}\ud x)$,
\begin{equation*}
	A=\frac{\int_{\bbfR^d} |x|^{-\sigma }
	f(x)   \ud x }{\int_{\bbfR^d} |x|^{-2\sigma }\se^{-\frac{|x|^2}{4}} \ud x },
\end{equation*}   
 $
	\vf_\sigma (x,t)= 1 \vee ({\sqrt{t}}/{|x|}) ^\sigma$, and
\begin{align*}
	\|
	g\|_{q,\vf_\sigma (t)}=
	\begin{cases}
	\Big( \int_{\bbfR^d}|
	g(x)|^q \vf_{\sigma }^{2-q}(x,t)  \ud x \Big)^{1/q} & \textrm{for}\quad 1 \le  q<\infty,\\ 
{\rm ess}\sup_{x \in \bbfR^d}|
	g(x)|/\vf_{\sigma }(x,t)  &\text{for}\quad q=\infty.
	\end{cases}
\end{align*}

	In Theorem \ref{lim norm th} we extend the results of 
	V\'azquez and Zuazua	\cite{MR1760280} and Pilarczyk \cite{MR3020137}
	to the Cauchy problem \eqref{equation}. In fact we also propose a novel description of the asymptotics, using simpler, time-independent norms $\|f\|_{q,h }$ defined in \eqref{normqh}.

We now return to the setting of \eqref{equation} and \rf{eq:dk0}. The solutions to \rf{equation} will be defined as $u(t,x)= \tilde P_t f(x)$, $t>0$, $x\in \Rd$ for $f\in L^1(H)$. By Theorem~\ref{t:eta2},
\begin{align*}
\Psi_t(x):=\lim_{y \to 0} \frac{\tilde{p}(t,x,y)}{h(y)}=\rho_t(0,x)h(x)
= t^{\frac{2\delta-d}{\alpha}}\vf\big( t^{-1/\alpha}x\big)h(x),\quad t>0, \quad x\in \Rd. \nonumber
\end{align*}
In particular, $\Psi_t(0)=\infty$. Since, for $s>0$,
\begin{align*}
\rho_{t+s}(0,x)&=\int_{\bbfR^d}\rho_t(0,y) \rho_s (y,x) h^2(y)\ud y= 
\int_{\bbfR^d}\rho_t(0,y) h(y) \tilde p (s,y,x)/h(x) \ud y,
\end{align*}
we get the following evolution property
\begin{align}\label{e.s}
\int_{\bbfR^d} \tilde p (s,y,x)\Psi_t(y) \ud y =\Psi_{t+s}(x). 
\end{align}
By \eqref{e.et} we also have
\begin{equation}\label{ss form eta1}
\Psi_t(x)= t^{\frac{\delta-d}{\alpha} } \Psi_1(t^{-1/\alpha }x), \quad t>0,\quad x\in \Rd.
\end{equation}
We summarize \eqref{e.s} and \eqref{ss form eta1} by saying that $\Psi_t(x)$ is a self-similar semigroup solution of \rf{equation}. By \rf{est_t},
\begin{equation}\label{psi_int}
	\int_{\Rd} \Psi _t (x) h(x)\ud x =\int_{\Rd} \Psi_1 (x) h(x) \ud x =1.
	\end{equation}
Furthermore, $\Psi_t(x)$ is a mild solution of \eqref{equation} since it satisfies the following Duhamel formula.
\begin{lemma}\label{lem:PsiDuh}
\begin{align}\label{e.DfPsi}
\Psi_t(x) &= \int_0^t\int_{\Rd} \Psi_r(z) q(z) p(t-r,x,z) \ud{z}\ud{r}, \quad t>0,\; x\in \Rd.
\end{align}
\end{lemma}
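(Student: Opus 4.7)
The plan is to combine the semigroup identity $\Psi_t = \tilde P_s \Psi_{t-s}$ (which follows from \eqref{e.s} and the symmetry of $\tilde p$, valid for any $0<s<t$) with the first Duhamel formula \eqref{eq:Df1} for $\tilde p$, and then let $s\uparrow t$. This is more transparent than passing directly to the limit $y\to 0$ in \eqref{eq:Df1} against $h(y)$, because the cross-term cancellation is handled by the semigroup property rather than by a delicate dominated convergence argument at the singularity.

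Concretely, I would start from
\[
\Psi_t(x)\;=\;\tilde P_s\Psi_{t-s}(x)\;=\;\int_{\Rd}\tilde p(s,x,z)\,\Psi_{t-s}(z)\ud z, \qquad 0<s<t,
\]
and insert \eqref{eq:Df1}, obtaining a sum of two non-negative integrals. Tonelli's theorem (all integrands are non-negative) allows the order of integration to be swapped in the second term, and the inner integral $\int \tilde p(s-r,w,z)\Psi_{t-s}(z)\ud z$ equals $\tilde P_{s-r}\Psi_{t-s}(w)=\Psi_{t-r}(w)$ by the semigroup property \eqref{e.s}. This gives the intermediate identity
\[
\Psi_t(x)\;=\;P_s\Psi_{t-s}(x)\;+\;\int_0^s\!\int_{\Rd} p(r,x,w)\,q(w)\,\Psi_{t-r}(w)\ud w\ud r.
\]

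Next I would let $s\uparrow t$. For the second term, the integrand is non-negative and the domain is expanding, so monotone convergence yields $\int_0^t\!\int_{\Rd} p(r,x,w)q(w)\Psi_{t-r}(w)\ud w\ud r$; renaming $r\mapsto t-r$ produces the form in \eqref{e.DfPsi}. The vanishing of $P_s\Psi_{t-s}(x)$ is the point where one must be careful: using the scaling \eqref{ss form eta1} and the substitution $u=(t-s)^{-1/\alpha}z$ one writes
\[
P_s\Psi_{t-s}(x)\;=\;(t-s)^{\delta/\alpha}\int_{\Rd} p\!\left(s,x,(t-s)^{1/\alpha}u\right)\Psi_1(u)\ud u,
\]
and since $\Psi_1\in L^1(\Rd)$ (from $\Psi_1(u)\lesssim (1+|u|)^{-d-\alpha}$ for large $|u|$ by Corollary~\ref{c.bphi}, and $\Psi_1(u)\lesssim |u|^{-\delta}$ near the origin with $\delta<d$), and $p(s,x,\cdot)$ is uniformly bounded for $s$ near $t$, dominated convergence gives a finite limit multiplied by the prefactor $(t-s)^{\delta/\alpha}\to 0$.

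The main technical subtlety is this vanishing step, and it implicitly requires $\delta>0$ (i.e.\ $\kappa>0$); the degenerate case $\delta=0$ has $q\equiv 0$ and $\Psi_t=p_t$, and \eqref{e.DfPsi} is then trivially false, so the statement should be understood for $\delta>0$. All other steps are formal manipulations with non-negative integrands, so Tonelli and monotone convergence apply without restriction; the finiteness needed to legitimately subtract $P_s\Psi_{t-s}(x)$ from $\Psi_t(x)$ at each fixed $s\in(0,t)$ is automatic from the identity itself, since $\Psi_t(x)<\infty$ for $x\in\Rd\setminus\{0\}$ by Lemma~\ref{Lemma_rho_lim_0}, and the pointwise finiteness at $x=0$ follows from the same estimates combined with \eqref{rho_lim}.
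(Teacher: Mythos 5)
Your proof is correct and follows essentially the same route as the paper's: the paper starts from $\Psi_t=\tilde P_{t-s}\Psi_s$, inserts the Duhamel formula \eqref{eq:Df1} for $\tp(t-s,\cdot,\cdot)$, and lets $s\downarrow 0$, whereas you start from the reparameterized identity $\Psi_t=\tilde P_s\Psi_{t-s}$ and let $s\uparrow t$; after the substitution $s\mapsto t-s$ these are the same computation, and your estimate showing the free term vanishes is the same scaling argument in different clothes. One genuinely useful observation you make is the requirement $\delta>0$: the paper's own proof ends with the line $\lim_{s\to0}s^{\delta/\alpha}=0$, which fails when $\delta=0$ (the term converges to $p_t(x)\neq 0$), and indeed for $\kappa=0$ the identity \eqref{e.DfPsi} reads $p_t(x)=0$ and is false, so the lemma implicitly assumes $\delta>0$ even though the standing convention $\delta\in[0,(d-\alpha)/2]$ admits $\delta=0$. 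A minor stylistic remark: you do not actually need dominated convergence for the factor $\int p(s,x,(t-s)^{1/\alpha}u)\Psi_1(u)\ud u$; the uniform bound $p(s,x,\cdot)\le s^{-d/\alpha}$ together with $\Psi_1\in L^1$ already shows it stays bounded, which suffices once the prefactor $(t-s)^{\delta/\alpha}$ tends to zero.
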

\begin{proof}
By \eqref{eq:mainThmEstz}, we have
\begin{align*}
\frac{\tp(t,x,y)}{h(y)} \approx (|y|^{\delta}+t^{\delta/\alpha})(1+t^{\delta/\alpha}|x|^{-\delta})p(t,x,y),
\end{align*}
hence
\begin{align}\label{eq:u_est}
\Psi_t(x) \approx 
t^{\delta/\alpha}(1+t^{\delta/\alpha}|x|^{-\delta})
\big(t^{-d/\alpha} \land \tfrac{t}{|x|^{d+\alpha}}\big), \qquad t>0,\; x \in \Rd.
\end{align}
Now, by \eqref{e.s} and \eqref{eq:Df1} for $0<s<t$ we obtain
\begin{align}
\Psi_t(x) &= \int_{\RR^d} \tp(t-s,x,y) \Psi_s(y)\ud y \nonumber \\
&+ \int_0^{t-s} \int_{\RR^d} \int_{\RR^d} p(r,x,y) q(y) \tilde{p}(t-s-r,y,z)  \Psi_s(z)\ud z \ud y\ud r \nonumber\\
&= \int_{\RR^d} p(t-s,x,y) \Psi_s(y)\ud y + \int_0^{t-s} \int_{\RR^d} p(r,x,y) q(y) \Psi_{t-r}(,y)\ud y\ud r \nonumber\\	
&= \int_{\RR^d} p(t-s,x,y) \Psi_s(y)\ud y + \int_s^t \int_{\RR^d} p(t-r,x,y) q(y) \Psi_r(y)\ud y\ud r. \label{eq:PsiDuh2}
\end{align}
Using \eqref{eq:u_est} and \eqref{ss form eta1}, we get
\begin{align*}
0 \le \limsup_{s\to0} \int_{\RR^d} p(t-s,x,z) \Psi_s(z) \ud{z}&\le c \limsup_{s\to0} (t-s)^{-d/\alpha} \int_{\RR^d} \Psi_s(z) \ud{z} \\
&=  c \limsup_{s\to0} (t-s)^{-d/\alpha} \int_{\RR^d} s^{(\delta-d)/\alpha} \Psi_1(s^{-1/\alpha}z) \ud{z} \\
& = c t^{-d/\alpha} \int_{\RR^d}  \Psi_1(z) \ud{z}  \lim_{s\to0} s^{\delta/\alpha}= 0. 
\end{align*}
Therefore,	taking $s \to 0$ in \eqref{eq:PsiDuh2}, we obtain \rf{e.DfPsi}.
\end{proof}

\begin{remark} 
By the self-similarity \rf{ss form eta1},
the definition of $\| \cdot \|_{q, h }$, the change of variables $y=t^{-1/\alpha}x$, and \rf{eq:u_est}, we have for all $t>0$
	\begin{equation}\label{e.sq}
		0<\|\Psi_t\|_{q, h }=t^{-\frac{d-2\delta}{\alpha}(1-\frac{1}{q})}\|\Psi_1\|_{q,h }<\infty,
	\end{equation}
see \rf{e.spt}.
It follows that
	\begin{equation}\label{e.sqst}
		\|\Psi_{st}\|_{q, h }=t^{-\frac{d-2\delta}{\alpha}(1-\frac{1}{q})}\|\Psi_s\|_{q,h }
	\end{equation}
and
\begin{align}\label{Psi_norm}
	\| \Psi_t\|_{1,h} = \| \Psi_1\|_{1,h}=1,
	\end{align}
which is the same as \rf{psi_int}. Furthermore,
	\begin{equation*}
		t^{-\frac{d-2\delta}{\alpha}(1-\frac{1}{q})}\| u(t, \cdot)-A\Psi_t\|_{q,h }=\|
		t^{\frac{d-\delta }{\alpha}}u\left(t,t^{\frac{1}{\alpha}} \cdot \right) -A\Psi_1\|_{q,h },
	\end{equation*}
in analogy with the results of 
Pilarczyk \cite{MR3020137}
 and V\'azquez and Zuazua	\cite{MR1760280}.
\end{remark}
\begin{remark}
Similar arguments yield the following result, to be proved in a forthcoming paper,
		\begin{equation}\label{H_asymp}
		\lim_{t\rightarrow \infty }t^{\frac{d}{\alpha}\left(1-\frac{1}{q}\right)-\frac{\delta}{\alpha}}\| u(t,\cdot )-A\Psi_t(\cdot )\|_{q, H_t}=0, 
	\end{equation}
where $1 \le  q \le  \infty$, $H_t(z) =H(t^{-1/\alpha}z)$ and the 
norms are defined by
\begin{equation*}
	\|f\|_{q,H_t}=
		\begin{cases}
	\bigg( \int_{\bbfR^d} |f(x)|^q H_t ^{2-q} (x)  \ud x  \bigg) ^{\frac{1}{q}}  & \textrm{for}\quad 1 \le  q<\infty,\\ 
\sup_{x\in \bbfR^d} |f(x)|/H_t(x)  &\text{for}\quad q=\infty.
	\end{cases}
\end{equation*}
The asymptotics \rf{H_asymp} is an exact analogue of the result in \cite[Theorem 2.1]{MR3020137}.
	\end{remark}
\begin{remark}\label{R:new1}
By  \eqref{e.cl} and \eqref{psi_int}, $A$ in Theorem~\ref{lim norm th} satisfies $\int_{\bbfR^d} \big(\tilde P_t  f(x)-A \Psi_s(x)\big)h(x)  \ud x =0$ for all $t\ge0, s >0$.
	\end{remark}
\begin{remark}\label{r.cit} By Lemma~\ref{contraction} we have $\int_\Rd u(t,x)h(x) \ud x=\int_\Rd f(x)h(x)\ud x$ for all $t\ge 0$. Here is a rather heuristic alternative argument for the equality:
	If we multiply \rf{equation} by the function $h$ and integrate, then indeed we get
	\begin{equation*}
		\frac{\ d }{\ d t} \int_{\bbfR^d} |x|^{-\delta }u(t,x)  \ud x  = -\int_{\bbfR^d}(-\Delta )^{\nicefrac{\alpha }{2}}  u(t,x) |x|^{-\delta }  \ud x  + \kappa_\delta \int_{\bbfR^d} |x|^{-\delta -\alpha }u(t,x)  \ud x .
	\end{equation*}
Using the Fourier symbol of $\Delta^{\alpha/2}$, Parseval's relation and \rf{eq:dk0} we get
	\begin{align*}
		\frac{\ d }{\ d t} \int_{\R^d} |x|^{-\delta }u(t,x)  \ud x  &= -\frac{2^{\alpha }\pi^{\alpha+\delta - \nicefrac{d}{2}} \Gamma \left( \frac{d-\delta}{2} \right)}{ \Gamma \left(\frac{\delta}{2}\right)}\int_{\R^d} |\xi |^{\alpha + \delta -d}\hat u(t,\xi ) \ud \xi + \kappa_\delta \int_{\R^d} |x|^{-\delta -\alpha }u(t,x)  \ud x  \\
		& =\Big( \kappa_\delta  - \kappa_\delta\Big) \int_{\bbfR^d} |x|^{ -\delta -\alpha } u(t,x)  \ud x  = 0,
	\end{align*}
see \cite[Theorem 2.2.14 p.102]{MR2445437} and \cite[Lemma 2 p.117]{MR0290095}.
	\end{remark}

Recall that $\tilde P_t$ is defined in \eqref{e.dtp}. The next lemma is crucial for the proof of Theorem~\ref{lim norm th}. 
\begin{lemma}\label{lim 0}
If $f\in L^1(H)$, $\int_{\bbfR^d } f(x)h(x) \ud x  =0$ and $1 \le  q <  \infty$, then 
\begin{equation}\label{lim_0_cond}
	\lim_{t \rightarrow \infty }t^{\frac{d-2\delta}{\alpha}(1-\frac{1}{q})}\| \tilde P_t f\|_{q,h }=0.
	\end{equation}
If, additionally, $f$ has compact support, then \eqref{lim_0_cond} is true for $q=\infty$, too.
\end{lemma}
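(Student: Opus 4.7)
The plan is to transfer the problem to the symmetric Doob-conditioned kernel $\rho$, exploit self-similarity to reduce everything to a single-time limit, and then pass to that limit using the joint continuity of $\rho_1$ on $\Rd\times\Rd$ obtained in Section~\ref{s:csss}. Putting $g:=f/h$, the hypothesis becomes $A=\int_\Rd g(y)h^2(y)\,\ud y=0$, and one has $\tilde P_t f(x)/h(x)=\mathcal{R}_t g(x)$ and $\|\tilde P_t f\|_{q,h}=\|\mathcal{R}_t g\|_{L^q(h^2)}$. Using the self-similarity \eqref{ss form rho} and the substitution $x\mapsto t^{1/\alpha}x$, $y\mapsto t^{1/\alpha}y$ in both arguments of $\rho_t$,
\begin{equation*}
t^{\frac{d-2\delta}{\alpha}(1-\frac{1}{q})}\|\tilde P_t f\|_{q,h}=\|\mathcal{R}_1 g_{(t)}\|_{L^q(h^2)},\qquad \mathcal{R}_1 g_{(t)}(x)=\int_\Rd\rho_1(x,t^{-1/\alpha}y)\,g(y)\,h^2(y)\,\ud y,
\end{equation*}
so it suffices to prove $\|\mathcal{R}_1 g_{(t)}\|_{L^q(h^2)}\to 0$ as $t\to\infty$.

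I would first dispose of the case $\supp f\subset B(0,K)$ and $1\le q<\infty$. For each fixed $x$, the joint continuity of $\rho_1$ through the origin (Lemma~\ref{Lemma_rho_lim_0}) and the fact that $t^{-1/\alpha}y\to 0$ uniformly for $y\in\supp g$ give $\rho_1(x,t^{-1/\alpha}y)\to\rho_1(x,0)$, whence $\mathcal{R}_1 g_{(t)}(x)\to\rho_1(x,0)\,A=0$ by dominated convergence. For a $t$-independent majorant, estimate \eqref{eq:mainThmEstrho2}, applied with $|t^{-1/\alpha}y|\le K$ valid for $t\ge 1$, yields $\rho_1(x,t^{-1/\alpha}y)\le c_K(1+|x|)^{-d-\alpha+\delta}$, so that
\begin{equation*}
|\mathcal{R}_1 g_{(t)}(x)|\le c_K(1+|x|)^{-d-\alpha+\delta}\|f\|_{L^1(h)}.
\end{equation*}
This majorant belongs to $L^q(h^2)$: near $0$ the density $|x|^{-2\delta}$ is integrable because $2\delta\le d-\alpha<d$, and at infinity $|x|^{-q(d+\alpha-\delta)-2\delta}$ is integrable since $q(d+\alpha-\delta)+2\delta\ge d+\alpha+\delta>d$. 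A second application of dominated convergence finishes the compactly supported case for $q<\infty$.

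For $q=\infty$ with $f$ still compactly supported, the tail bound yields $\sup_{|x|>R}|\mathcal{R}_1 g_{(t)}(x)|\to 0$ as $R\to\infty$ uniformly in $t\ge 1$, while on each ball $\overline{B(0,R)}$ the family $\{\mathcal{R}_1 g_{(t)}\}_{t\ge 1}$ is equicontinuous thanks to the uniform continuity of $\rho_1$ on the compact $\overline{B(0,R)}\times\overline{B(0,K)}$; equicontinuity plus pointwise convergence then gives uniform convergence on $\overline{B(0,R)}$, hence $\sup_x|\mathcal{R}_1 g_{(t)}(x)|\to 0$. To remove the compact-support hypothesis when $q<\infty$, I fix $\psi\in C_c(\Rdz)$ with $\int\psi h\,\ud x=1$ and set $\varphi_n:=f\indyk_{B(0,n)}-\epsilon_n\psi$ with $\epsilon_n:=\int_{B(0,n)}fh\,\ud x$; since $A=0$, $\epsilon_n\to 0$ and $\|f-\varphi_n\|_{L^1(H)}\to 0$, while each $\varphi_n$ is compactly supported with $\int\varphi_n h\,\ud x=0$. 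Applying the hypercontractivity estimate \eqref{w norm} to $f-\varphi_n$,
\begin{equation*}
t^{\frac{d-2\delta}{\alpha}(1-\frac{1}{q})}\|\tilde P_t(f-\varphi_n)\|_{q,h}\le C\|f-\varphi_n\|_{L^1(h)}+Ct^{-\frac{\delta}{q\alpha}}\|f-\varphi_n\|_{L^1}\le 2C\|f-\varphi_n\|_{L^1(H)},
\end{equation*}
and combining with the compactly supported case for $\varphi_n$ via the triangle inequality, sending first $t\to\infty$ and then $n\to\infty$, will complete the proof.

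The main technical obstacle is producing a single $t$-independent $L^q(h^2)$-dominant for $\mathcal{R}_1 g_{(t)}$; the argument above rests on $t^{-1/\alpha}y$ remaining bounded when $y$ is bounded and $t\ge 1$, together with the continuous extension of $\rho_1$ to $\Rd\times\Rd$ supplied by Theorem~\ref{t:eta}. The $q=\infty$ case is the most delicate, since dominated convergence no longer suffices and one must upgrade pointwise convergence to uniform convergence on compact sets via equicontinuity -- this is where the full joint continuity of $\rho_1$ (rather than just the radial limit at $y=0$) is essential.
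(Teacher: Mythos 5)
Your proposal is correct and follows essentially the same strategy as the paper's proof: reduce to the Doob-conditioned kernel $\rho$, use scaling to convert the $t\to\infty$ limit into a limit of $\rho_1(x, t^{-1/\alpha}y)$ with the second argument shrinking to $0$, exploit the zero-mean hypothesis to subtract $\rho_1(x,0)$, handle the compactly supported case directly, and remove the compact support assumption via hypercontractivity (Theorem~\ref{hyperc}) and contractivity. The small organisational differences are worth noting. The paper first treats $q=\infty$, then $q=1$ (via the $L^1(h^2)$ convergence of Lemma~\ref{l.crho}), and then interpolates to get $1<q<\infty$ by H\"older; you instead obtain all $1\le q<\infty$ at once by dominated convergence, using the uniform-in-$t$ majorant $c_K(1+|x|)^{-d-\alpha+\delta}\|f\|_{L^1(h)}\in L^q(h^2)$, which bypasses the interpolation step. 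For $q=\infty$, the paper directly bounds $\sup_{x,|y|\le\omega}|\rho_1(x,y)-\rho_1(x,0)|$, with the supremum over all $x$ controlled by the decay in $x$ from \eqref{eq:mainThmEstrho2} together with uniform continuity on compacts; your equicontinuity-plus-tail argument is a slightly longer route to the same end. Your cut-off in Step~4 (subtracting $\eps_n\psi$ with a fixed $\psi\in C_c(\Rdz)$) is functionally equivalent to the paper's choice $\psi_R=(f-c_R)\indyk_{B(0,R)}$. One technical point to be aware of: you cite \eqref{w norm} for the approximation in Step~4, but Theorem~\ref{hyperc} is stated for $1<q<\infty$; for $q=1$ you need instead the contractivity $\|\tilde P_t g\|_{1,h}\le\|g\|_{L^1(h)}$ of Lemma~\ref{contraction}, exactly as the paper does. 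This is a trivial fix, but without it the $q=1$ case of Step~4 is not literally covered by the estimate you invoke.
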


\begin{proof}
	First, we consider a compactly supported function $\psi $ such that $\psi \in L^1(H)$ and
	\begin{equation}\label{0 con 1}
		\int_{\Rd} \psi (x) h(x)  \ud x =0 ,
	\end{equation}
and we intend to prove \rf{lim_0_cond} with $f$ replaced by $\psi$. 

\noindent
{\it Step 1. Case $q=\infty $.}\\
By the definition of the norm $\| \cdot \|_{\infty , h}$,
	\begin{align*}
		I(t)&:= t^{\frac{d-2\delta}{\alpha}}\| \tilde P_t \psi \|_{\infty ,h }
		=t^{\frac{d-2\delta}{\alpha}}\sup_{x\in \Rd} \left| \int_{\Rd} \rho _t(x,y)h(y)\psi(y) \ud y \right|.
	\end{align*}
	Using the condition \rf{0 con 1}, we obtain
	\begin{align*}
		I(t) = t^{\frac{d-2\delta}{\alpha}}\sup_{x\in \Rd}\left| \int_{\Rd} \left( \rho_t(x,y)-\rho_t(x,0)\right)h(y)\psi (y)  \ud y \right|.
	\end{align*}
	We fix $\omega>0$. Since $\psi$ has a compact support,  for sufficiently large $t>0$ we get
	\begin{align*}
		I(t) &=t^{\frac{d-2\delta}{\alpha}}\sup_{x\in \Rd}\left| \int_{|y| \le  t^{\frac{1}{\alpha}}\omega} \left( \rho_t(x,y)-\rho_t(x,0)\right)h(y)\psi (y)  \ud y \right|\\
		& \le   t^{\frac{d-2\delta}{\alpha}}\sup_{\substack {x\in \bbfR^d \\ |y| \le  \omega t^{\frac{1}{\alpha}}}}\big|\rho_t(x,y) -\rho_ t(x,0)\big| \int_{|y| \le  \omega t^{\frac{1}{\alpha}}}h(y)|\psi (y)|  \ud y ,
	\end{align*}
	so by scaling of $\rho$,
	\begin{align*}
		I(t)& \le   \sup_{\substack {x\in \Rd \\ |y| \le  \omega t^{\frac{1}{\alpha}}}}\left| \rho_1\left(t^{-\frac{1}{\alpha}}x,t^{-\frac{1}{\alpha}}y \right) - \rho_1\left( t^{-\frac{1}{\alpha}}x, 0\right) \right| \int_{|y| \le  t^{\frac{1}{\alpha}}\omega} h(y)| \psi(y)|  \ud y \\
		& \le  \sup_{\substack {x\in \Rd \\ |y| \le  \omega}}\left| \rho_1\left(x,y \right) - \rho_1\left( x, 0\right) \right| \int_{ \Rd}h(y)| \psi(y)|  \ud y.
	\end{align*}
By Lemma \ref{Lemma_rho_lim_0} and \rf{eq:mainThmEstrho2} we choose $\omega $ small enough to have
\begin{equation*}
	\sup_{\substack {x\in \Rd \\ |y| \le  \omega}}\left| \rho_1\left(x,y \right) - \rho_1\left( x, 0\right) \right|<\ve.
	\end{equation*}
 This proves \rf{lim_0_cond} in the considered setting.\\
 	{\it Step 2. Case $q=1$. }\\
By the definition of the norm $\| \cdot \|_{1, h}$,
	\begin{equation*}
		J(t):=\| \tilde P_t \psi \|_{1,h}= \int_{\Rd}\left| \int_{\Rd}\tilde p(t,x,y)h(x)\psi(y) \ud y \right| \ud x  =\int_{\bbfR^d }\Big| \int_{\bbfR^d }\rho_t (x,y)h^2(x)\psi (y)h(y)  \ud y  \Big|   \ud x .
	\end{equation*}
	Applying \rf{0 con 1}, we get
	\begin{equation*}
		J(t) \le  \int_{\bbfR^d } \int_{\bbfR^d } \big| \rho_t(x,y)-\rho_t(x,0)\big|h^2(x) |\psi (y)|h(y)  \ud y   \ud x .
	\end{equation*}
	We fix $\omega  >0$ and notice that 
	\begin{align*}
		J(t)& \le  \int_{\bbfR^d } \int_{|y| \le  \omega t^{\frac{1}{\alpha}}} \big| \rho_t (x,y)-\rho_t(x,0)\big|h^2(x) |\psi (y)| h(y)  \ud y   \ud x  ,
	\end{align*}
	for sufficiently large $t$, since the function $\psi $ has a compact support. Then
	\begin{align*}
		J(t)
		& \le  \sup_{|y| \le  \omega t^{\frac{1}{\alpha}}}\| \rho_t(\cdot , y)-\rho_t(\cdot ,0)\|_{L^1(h^2)} \int_{\bbfR^d} h(y)|\psi (y)|  \ud y .
	\end{align*}   
	By scaling of $\rho$, substituting $x=t^{\frac{1}{\alpha}}z$ we obtain
	\begin{align*}
		&\sup_{|y| \le  \omega t^{\frac{1}{\alpha}}}\| \rho_t(\cdot , y)-\rho_t(\cdot ,0)\|_{L^1(h^2)} \\
		&= t^{\frac{2\delta-d}{\alpha}} \sup_{|y| \le  \omega t^{\frac{1}{\alpha}}}\int \left | \rho_1 \left(t^{-{1}/{\alpha}}x,t^{-{1}/{\alpha}}y \right) -\rho_1\left( t^{-{1}/{\alpha}}x,0\right)\right|h^2(x) \ud x \\
		&=\sup_{|y|<\omega} \| \rho_1(\cdot ,y) -\rho_1(\cdot, 0)\|_{L^1(h^2)}.
	\end{align*}
	Applying Lemma~\ref{l.crho}, for $\omega >0$ sufficiently small we get
	\begin{equation*}
		J(t) \le  \ve \| \psi \|_{L^1(h)} .
	\end{equation*}   
	{\it Step 3. Case $q\in (1,\infty)$. }\\
	By the definition of the norm $\| \cdot \|_{q, h }$ and H\"older inequality,
	\begin{align*}
		t^{\frac{d-2\delta}{\alpha}(1-\frac{1}{q})}\|\tilde P_t \psi \|_{q, h }&=t^{\frac{d-2\delta}{\alpha}(1-\frac{1}{q})}\bigg(\int_{\R^d}\big| \tilde P_t\psi (x)/h(x)\big|^{q-1}\big|\tilde P_t\psi (x)h(x)\big|  \ud x \bigg)^{\frac{1}{q}} \\
		& \le  \left(t^{\frac{d-2\delta}{\alpha}}\|\tilde P_t\psi \|_{\infty ,h}\right)^{1-\frac{1}{q}} \left(|\tilde P_t\psi \|_{1, h} \right)^{\frac{1}{q}}.
	\end{align*}
	Both factors converge to zero as $t\rightarrow \infty $ by {\it Step 1.} and {\it Step 2.} \\
	{\it Step 4. General initial datum.}\\ 
	Let $R>0$, $c_R=\int_{|x|\le R} f(x)h(x)\ud x/ \int_{|x|\le R} h(x)\ud x$ and $\psi_R (x)= (f (x)-c_R)\indyk_{|x| \le  R}$. Of course, $\psi_R$ is
compactly supported and
$$\int_{\bbfR^d} h(x) \psi_R (x)  \ud x  =0.$$ 
Furthermore, 
	\begin{align*}
		\|f- \psi_R \|_{L^1(h )}& =   |c_R| \int_{|x| \le  R}h(x)  \ud x+\int_{|x|> R}h(x)|f (x)|  \ud x  \\
		& = \Big| \int_{|x| \le  R} h(x)f (x)  \ud x  \Big| + \int_{|x|> R}h(x)|f (x)|  \ud x \to 0 
	\end{align*}
 as $R\rightarrow \infty $, due to the assumption \rf{0 con 1} and the condition $f \in L^1(h)$. Let $\ve >0$ and choose $R>0$ so large that 
	\begin{equation*}
		\| f -\psi_R \|_{1,h}<\ve.
	\end{equation*}

For $q=1$, by using the triangle inequality and Lemma \ref{contraction}, we have
	\begin{align*}
		\|\tilde P_t f\|_{1,h }& \le  \|\tilde P_t\psi_R \|_{1,h }+ \|\tilde P_t(f-\psi_R) \|_{1,h }\\
		& \le  \|\tilde P_t\psi_R \|_{1,h }+ 
		\|f -\psi_R \|_{1, h }.
	\end{align*}
Hence, by {\it Step 2.} of this proof we get
	\begin{equation}\label{lim epsilon1}
		\limsup_{t\rightarrow \infty } \|\tilde P_t f\|_{1,h } \le  \ve,
	\end{equation}
which completes the verification of \rf{lim_0_cond} in this case.

If $1<q<\infty$, then using the triangle inequality and Theorem \ref{hyperc}, we obtain 
	\begin{align*}
		t^{\frac{d-2\delta}{\alpha}(1-\frac{1}{q})}\|\tilde P_t f\|_{q,h }& \le  t^{\frac{d-2\delta}{\alpha}(1-\frac{1}{q})}\|\tilde P_t\psi_R \|_{q,h }+ t^{\frac{d-2\delta}{\alpha}(1-\frac{1}{q})}\|\tilde P_t(f-\psi_R) \|_{q,h }\\
		& \le  t^{\frac{d-2\delta}{\alpha}(1-\frac{1}{q})}\|\tilde P_t\psi_R \|_{q,h }+ 
		C\|f -\psi_R \|_{1, h } +Ct^{-\frac{\delta}{\alpha}}\|f-\psi_R\|_{L^1(\R^d)}.
	\end{align*}
By {\it Step 3.} of this proof,
	\begin{equation}\label{lim epsilon}
		\limsup_{t\rightarrow \infty } t^{\frac{d-2\delta}{\alpha}(1-\frac{1}{q})}\|\tilde P_t f\|_{q,h } \le  
2C\ve,
	\end{equation}
which is valid both for $\delta>0$ and $\delta=0$.
This completes the proof of \rf{lim_0_cond} for $q\in (1,\infty)$.  
\end{proof}

We prove Theorem \ref{lim norm th} immediately after the following discussion of \textit{solutions} to \eqref{equation}.
\begin{remark}\label{r.ww}
{\rm
By \eqref{eq:PsiDuh2} we conclude that $\Psi_t(x)$ is a mild solution of  \eqref{equation}. 
 In passing we note that, by \eqref{eq:u_est}, $\Psi_t(x)\to 0$ when $t\to 0^+$ and $x\neq 0$.
Further, if $f\in L^1(H)$, $t>0$, $x\in \Rdz$ and $u(t,x)=\tilde P_t f(x)$, then 
\begin{align}
u(t,x) 
&= \int_{\RR^d} p(t,x,y) f(y)\ud y + \int_0^t \int_{\RR^d} p(t-r,x,y) q(y) u(r,y)\ud y\ud r, \label{eq:PsiDuh23}
\end{align}
as follows from \eqref{eq:Df1}.
This justifies calling $u$ in Theorem \ref{lim norm th}  solution to \eqref{equation}. 
By definition, $u$ can also be called the semigroup solution and \eqref{e.s} yields an analogue for $\Psi_t(x)$.
}
\end{remark}

\begin{proof}[Proof of Theorem \ref{lim norm th}]
By \rf{eq:ChK}, \rf{e.s}, Remark \ref{R:new1} and Lemma \ref{lim 0},
\begin{align*}
		\lim_{t\rightarrow \infty }t^{\frac{d-2\delta}{\alpha}(1-\frac{1}{q})}\| u(t, \cdot)-A\Psi_t\|_{q,h}& = \lim_{t\to \infty}t^{\frac{d-2\delta}{\alpha}(1-\frac{1}{q})}\|u(t+1,\cdot) -A\Psi_{t+1}\|_{q,h} \\
		&= \lim_{t\to \infty}t^{\frac{d-2\delta}{\alpha}(1-\frac{1}{q})}\|\tilde P_t\left( \tilde P_1f-A\Psi_1\right)\|_{q,h}=0.
	\end{align*}
\end{proof}

Theorem~\ref{lim norm th} is optimal, as asserted by the following two observations. 
\begin{proposition}\label{p.es}
Let $q\in[1,\infty)$ and $\tau:[0,\infty)\to [0,\infty)$ be increasing with $\lim_{t\to \infty}\tau(t)=\infty$. Then there is $f\in L^1(H)$ such that  $\int_{\Rd}f (x)  h(x) \ud x=1$ and $u:=\tilde P_t f$ satisfies
	\begin{equation}\label{e.ass}
		\lim_{t\rightarrow \infty }\tau(t)t^{\frac{d-2\delta}{\alpha}(1-\frac{1}{q})}\| u(t, \cdot)-\Psi_t\|_{q, h}=\infty.
	\end{equation}
\begin{proof}
The proof builds on the fact that $\|\Psi_{1}-\Psi_s\|_{q,h}>0$ for $s>1$, which, as we shall see below, is a consequence of the scaling of $\Psi_t(x)$.

Without loss of generality we may assume that $\tau$ is continuous, strictly increasing and $\tau(0)=0$, 
by replacing $\tau(t)$ by $t/(t+1)\left[t/(t+1)+\frac1t \int_0^t \tau(s)ds\right]\le \tau(t)+1$, see also \eqref{e.as}. In particular, $\tau^{-1}$ is well defined $: [0,\infty)\to [0,\infty)$.
For $n\in \bbfN$ we let $t_n=\tau^{-1}(2^{2n})$. 
Let
$f=\sum_{n=1}^\infty 2^{-n}\Psi_{t_n}$.
According to \eqref{Psi_norm}, 
$\int_{\Rd}f (x)  h(x) \ud x=\|f\|_{1,h}=1$. By \eqref{e.s},
$$u(t,x):=\tilde P_t f(x)=\sum_{n=1}^\infty 2^{-n} \Psi_{t+t_n}(x)\ge 0,\quad t>0,\ x\in \Rd.$$
We remark that if $\tau(t)\ge 1$, then $t_n\ge t$ is equivalent to 
$n\ge \left \lceil{\frac12 \log_2 \tau(t)}\right \rceil$. In this case,
by \eqref{e.sqst}, and triangle inequality, we get 
\begin{align}\label{e.sln}
&t^{\frac{d-2\delta}{\alpha}(1-\frac1q)}\| u(t, \cdot)-\Psi_t\|_{q, h}=
\|\sum_{n=1}^\infty 2^{-n} \Psi_{1+t_n/t}-\Psi_1\|_{q,h}\\
\nonumber
&\ge \|\Psi_1\|_{q,h} -\sum_{n=1}^\infty 2^{-n} \|\Psi_{1+t_n/t}\|_{q,h}
= \|\Psi_1\|_{q,h}\sum_{n=1}^\infty 2^{-n} \left(1-(1+t_n/t)^{-\frac{d-2\delta}{\alpha}(1-\frac{1}{q})}\right)\\
\nonumber
&\ge \|\Psi_1\|_{q,h}\left(1-2^{-\frac{d-2\delta}{\alpha}(1-\frac{1}{q})}\right)\sum_{t_n\ge t}^\infty 2^{-n}= \|\Psi_1\|_{q,h}\left(1-2^{-\frac{d-2\delta}{\alpha}(1-\frac{1}{q})}\right)
2^{1-\left \lceil{\frac12 \log_2 \tau(t)}\right \rceil}
\\
&\ge \|\Psi_1\|_{q,h}\left(1-2^{-\frac{d-2\delta}{\alpha}(1-\frac{1}{q})}\right)\tau(t)^{-1/2}.\nonumber
\end{align}
The case of $q\in (1,\infty)$ in \eqref{e.ass} is resolved, because  $1-2^{-\frac{d-2\delta}{\alpha}(1-\frac{1}{q})}>0$ in this case.

We next consider $q=1$. Then, starting from \eqref{e.sln},
we get 
\begin{align}
\| u(t, \cdot)-\Psi_t\|_{1, h}&=
\|\sum_{n=1}^\infty 2^{-n} \Psi_{1+t_n/t}-\Psi_1\|_{1,h}
\nonumber
\\\nonumber
&\ge \int_{B(0,1)}
|\Psi_1(x)-\sum_{n=1}^\infty 2^{-n} \Psi_{1+t_n/t}(x)|h(x)\ud x\\\nonumber
&\ge \int_{B(0,1)}\left(\Psi_1(x)-\sum_{n=1}^\infty 2^{-n} \Psi_{1+t_n/t}(x)\right)h(x)\ud x\\
&= \sum_{n=1}^\infty 2^{-n}\left(\int_{B(0,1)}\Psi_1(x)h(x)\ud x - \int_{B(0,1)}\Psi_{1+t_n/t}(x)h(x)\ud x\right).
\label{e.sln1aa}
\end{align}
By the same change of variables as in \eqref{e.sq}, for all $t>0$ and $U\subset \Rd$ we get
	\begin{align}\int_{U}\Psi_t(x) h(x)\ud x=
&\int_{t^{-1/\alpha}U}\Psi_1(x) h(x)\ud x.
\label{e.sq2a}
	\end{align}
Therefore by \eqref{e.sln1aa},
\begin{align*}
&\| u(t, \cdot)-\Psi_t\|_{1, h}
\ge \sum_{n=1}^\infty 2^{-n}\int_{B(0,1)\setminus B(0,(1+t_n/t)^{-1/\alpha})}\Psi_1(x)h(x)\ud x\\
&\ge \int_{B(0,1)\setminus B(0,2^{-1/\alpha})}\Psi_1(x)h(x)\ud x\sum_{t_n\ge t}^\infty 2^{-n}.
	\end{align*}
By \eqref{eq:u_est}, $\int_{B(0,1)\setminus B(0,2^{-1/\alpha})}\Psi_1(x)h(x)\ud x>0$, and we conclude as before.	
\end{proof}

\end{proposition}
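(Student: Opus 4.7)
The plan is to exploit the semigroup identity \eqref{e.s}, which states $\tilde{P}_t\Psi_s = \Psi_{t+s}$, and to construct $f$ as a countable convex combination of the self-similar profiles $\Psi_{t_n}$ themselves. Without loss of generality I may replace $\tau$ by a continuous strictly increasing function with $\tau(0)=0$ (this only strengthens the conclusion and is harmless for the stated $t\to\infty$ asymptotics), so $\tau^{-1}$ is well defined. Set $t_n := \tau^{-1}(2^{2n})$ and
\[
f := \sum_{n=1}^{\infty} 2^{-n}\Psi_{t_n}.
\]
Since $\|\Psi_s\|_{1,h}=1$ for every $s>0$ by \eqref{Psi_norm}, the series converges in $L^1(h)$ and $\int_{\Rd} f(x) h(x)\ud x = 1$; membership $f\in L^1(H)$ follows from the pointwise bound \eqref{eq:u_est}. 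By \eqref{e.s} applied termwise, $u(t,\cdot) = \sum_{n\geq 1} 2^{-n}\Psi_{t+t_n}$, and the task reduces to bounding $\|u(t,\cdot) - \Psi_t\|_{q,h}$ from below.

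For $q\in(1,\infty)$, write $\gamma := \tfrac{d-2\delta}{\alpha}(1-1/q)>0$. The scaling identity \eqref{e.sqst} gives $\|\Psi_{t+t_n}\|_{q,h} = (1+t_n/t)^{-\gamma}\|\Psi_t\|_{q,h}$, and the reverse triangle inequality combined with $t^\gamma\|\Psi_t\|_{q,h}=\|\Psi_1\|_{q,h}$ yields
\[
t^{\gamma}\|u(t,\cdot)-\Psi_t\|_{q,h}\ge \|\Psi_1\|_{q,h}\sum_{n=1}^{\infty}2^{-n}\bigl(1-(1+t_n/t)^{-\gamma}\bigr).
\]
Restricting the sum to indices with $t_n\geq t$, i.e.\ $n\geq n^* := \lceil\tfrac12\log_2\tau(t)\rceil$ by the choice of $t_n$, each retained summand is at least $1-2^{-\gamma}>0$, and the geometric tail $\sum_{n\geq n^*}2^{-n}=2^{1-n^*}$ is comparable to $\tau(t)^{-1/2}$. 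Multiplying by $\tau(t)$ yields $\tau(t)\cdot t^{\gamma}\|u(t,\cdot)-\Psi_t\|_{q,h}\gtrsim\tau(t)^{1/2}\to\infty$.

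The main obstacle is the case $q=1$, since there $\gamma=0$ and $\|\Psi_s\|_{1,h}=1$ for every $s$, so the reverse triangle trick above only delivers zero. The remedy is to localize on the fixed ball $B(0,1)$ and exploit the fact that mass of $\Psi_s$ on a fixed ball shrinks as $s\to\infty$. After applying the scaling relation \eqref{e.sqst} (which for $q=1$ is a change of variable with no prefactor) and using $|\Psi_1-\sum 2^{-n}\Psi_{1+t_n/t}|\ge \Psi_1-\sum 2^{-n}\Psi_{1+t_n/t}$, I would estimate
\[
\|u(t,\cdot)-\Psi_t\|_{1,h}\ge \sum_{n\geq 1}2^{-n}\!\left(\int_{B(0,1)}\!\Psi_1(x) h(x)\ud x - \int_{B(0,1)}\!\Psi_{1+t_n/t}(x) h(x)\ud x\right).
\]
The scaling identity $\int_U\Psi_s(x) h(x)\ud x=\int_{s^{-1/\alpha}U}\Psi_1(x) h(x)\ud x$ (a direct consequence of \eqref{ss form eta1}) turns each summand into $\int_{B(0,1)\setminus B(0,(1+t_n/t)^{-1/\alpha})}\Psi_1(x) h(x)\ud x$, which for $t_n\geq t$ dominates the strictly positive constant $c_0 := \int_{B(0,1)\setminus B(0,2^{-1/\alpha})}\Psi_1(x) h(x)\ud x$ (positivity follows from \eqref{eq:u_est}). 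Summing over $n\geq n^*$ gives $c_0\cdot\tau(t)^{-1/2}$, and multiplication by $\tau(t)$ closes the case. The subtle point throughout is that the $t$-dependence must be fully absorbed by the scaling substitution so that the ensuing lower bound is a $t$-independent positive constant times the geometric tail, which the choice $t_n=\tau^{-1}(2^{2n})$ is calibrated to deliver.
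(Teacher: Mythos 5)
Your proposal reproduces the paper's proof essentially verbatim: the same choice $t_n=\tau^{-1}(2^{2n})$, the same $f=\sum_{n}2^{-n}\Psi_{t_n}$, the same reverse-triangle-inequality-plus-scaling argument for $q\in(1,\infty)$, and the same localization to $B(0,1)$ via $\int_U\Psi_t(x)h(x)\ud x=\int_{t^{-1/\alpha}U}\Psi_1(x)h(x)\ud x$ to handle $q=1$. So the approach is an exact match.

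The one place where you go beyond what the paper writes is the parenthetical assertion that ``membership $f\in L^1(H)$ follows from the pointwise bound \eqref{eq:u_est}.'' This claim is not justified and, taken at face value, is false in general; notably, the paper's own proof also leaves $f\in L^1(H)$ unverified (it only checks $\|f\|_{1,h}=1$). Indeed, $\|\Psi_s\|_{L^1(h)}=1$ for every $s>0$ by \eqref{Psi_norm}, so $f\in L^1(h)$ is automatic; but \eqref{ss form eta1} gives $\int_{\Rd}\Psi_s(x)\ud x = s^{\delta/\alpha}\int_{\Rd}\Psi_1(x)\ud x$, whence $\|f\|_{L^1}\approx\sum_{n}2^{-n}t_n^{\delta/\alpha}$. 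Since $L^1(H)=L^1\cap L^1(h)$, this means $f\notin L^1(H)$ whenever $\delta>0$ and $t_n=\tau^{-1}(2^{2n})$ grows super-exponentially, which happens for slowly increasing $\tau$ (e.g.\ $\tau(t)=\log\log(e^e+t)$ gives $t_n\approx e^{e^{4^n}}$). You should therefore not present the $L^1(H)$ membership as a consequence of \eqref{eq:u_est}; to make the construction honest one would need either to restrict to $\delta=0$, to choose coefficients and times more carefully (which runs into a genuine tension with the desired tail lower bound), or to weaken the stated integrability of $f$ to $L^1(h)$ — but this is a deficiency of the paper's argument that you inherited, not an error you introduced.
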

\begin{remark}
We note that \eqref{e.as} does not hold for $q = \infty$. Indeed, let $f_n(x) = {\bf 1}_{B(x_n,1)}(x)$, where $\{x_n\}$ is a sequence in $\Rd$ such that $|x_n| = 2^{n}$. Then, by \eqref{eq:oppt}, for $t>1$ we have
\begin{align*}
\tP_t f_n(x_n) \ge \int_{B(x_n,1)} p(t,x_n,y) \ud{y} \ge c t^{-d/\alpha}.
\end{align*} 
Let $f(x) = \sum_{n=2}^{\infty} f_n(x)$. Clearly $f \in L^1(h)$ and $\tP_t f(x_n) \ge c t^{-d/\alpha}$. 
Moreover, by \eqref{eq:u_est} and \eqref{eq:oppt}, $\Psi_t(x_n) \le c  t^{\delta/\alpha} (1 + t^{-\delta/\alpha}|x_n|^{-\delta}) t|x_n|^{-d-\alpha}$. Hence for every $t>1$ we actually have,
\begin{align*}
\| u(t, \cdot)-A\Psi_t\|_{\infty, h} = \sup_{x \in \Rd} |\tP_t f(x) - A \Psi_t (x)|/h(x) \ge  c_t \sup_{n \in \NN}(|x_n|^{\delta} - |x_n|^{\delta-d-\alpha}) = \infty.
\end{align*}
\end{remark}

\section{The potential of the self-similar solution}\label{s.psss}

In this section we assume that $0\le \kappa \le \kappa^*$, except that on several occasions we explicitly exclude the critical case $\kappa=\kappa^*$.
As usual, $\kappa$ and $\delta$ are related by \rf{eq:dk0}. 

\textit{If} $\kappa < \kappa^*$ then, by \eqref{ss form eta1} and \eqref{eq:u_est}, there is $c\in (0,\infty)$, such that
\begin{align*}
c\int_0^\infty \Psi_s(x) \ud s = |x|^{\delta+\alpha-d}, \qquad x \in \Rd.
\end{align*}
Furthermore,
\begin{align*}
\int_{\R^d} \tilde{p}_t(x,y) |y|^{\delta+\alpha-d} \ud y &= c \int_0^\infty \int_{\R^d} \Psi_s(y) \tilde{p}_t(x,y) \ud y \ud s = c\int_0^\infty \Psi_{t+s}(x) \ud s\\ &= c\int_0^\infty \Psi_s(x) \ud s - c\int_0^t \Psi_s(x) \ud s = |x|^{\delta+\alpha-d} - c\int_0^t \Psi_s(x) \ud s.
\end{align*}
Hence, for $t>0$ and $y\in \RR^d \setminus \{0\}$,
\begin{align*}
c\int_0^t \Psi_s(x) \ud s = |x|^{\delta+\alpha-d}  - \int_{\R^d} \tilde{p}_t(x,y) |y|^{\delta+\alpha-d} \ud y.
\end{align*}
The main goal of this section is to calculate the constant $c$, and derive a similar formula for $\kappa=\kappa^*$ (see Corollary \ref{cor:intPsi}). 
\begin{lemma}\label{lem:PsiAI}
For $t>0$ and $x \in \Rd$ we have
\begin{align*}
\Psi_t(x) = \lim_{\beta \to 0^+} \frac{\Gamma(d/2)}{2\pi^{d/2}}\int_{\Rd} \beta |z|^{\beta+\delta-d} \tp(t,x,z) \ud{z}.
\end{align*}
\end{lemma}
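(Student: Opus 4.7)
My plan is to recognize the family of kernels
$K_\beta(z):=\frac{\Gamma(d/2)}{2\pi^{d/2}}\,\beta|z|^{\beta-d}$, $\beta>0$,
as an approximate identity at the origin in $\Rd$, and then apply it to the test function $\varphi(z):=|z|^{\delta}\tp(t,x,z)=\tp(t,x,z)/h(z)$, which by Theorem~\ref{t:eta2} extends continuously to $\Rd$ with value $\varphi(0)=\Psi_t(x)$. Since the statement rewrites as $\Psi_t(x)=\lim_{\beta\to 0^+}\int_{\Rd} K_\beta(z)\varphi(z)\ud z$, it suffices to prove this delta-like convergence, given the continuity from Theorem~\ref{t:eta2}.

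The normalization is chosen so that $K_\beta$ has unit mass near the origin: using spherical coordinates and $\omega_{d-1}=2\pi^{d/2}/\Gamma(d/2)$ we get, for every $r>0$,
\begin{equation*}
\int_{|z|<r} K_\beta(z)\ud z=\frac{\Gamma(d/2)}{2\pi^{d/2}}\,\beta\omega_{d-1}\int_0^{r}s^{\beta-1}\ud s=r^{\beta}\xrightarrow[\beta\to 0^+]{}1,
\end{equation*}
so that $K_\beta$ concentrates the right amount of mass at $0$. Thus the claim will follow once we check the two standard requirements: continuity of $\varphi$ at $0$ (already in hand) and negligibility of the tail of $K_\beta\,\varphi$.

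Fix $\varepsilon>0$ and, by the continuity of $\varphi$ at $0$, choose $r>0$ with $|\varphi(z)-\varphi(0)|<\varepsilon$ for $|z|\le r$. Split
\begin{equation*}
\int_{\Rd} K_\beta(z)\varphi(z)\ud z=\int_{|z|<r}+\int_{r\le |z|\le M}+\int_{|z|>M},
\end{equation*}
where $M:=2|x|+2t^{1/\alpha}+1$ (a constant independent of $\beta$). The first piece equals $r^\beta\varphi(0)+O(\varepsilon r^\beta)$ by the mass calculation above. On $r\le|z|\le M$ the function $\varphi$ is bounded (combine $\tp$'s continuity on $\Rdz$ with Theorem~\ref{t:eta2} to see $\varphi$ is continuous on all of $\Rd$), and $\int_{r\le|z|\le M}K_\beta(z)\ud z=M^\beta-r^\beta\to 0$ as $\beta\to 0^+$. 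For the tail $|z|>M$, the estimate \eqref{eq:mainThmEstz} gives, with a constant depending only on $t,x,d,\alpha,\delta$,
\begin{equation*}
\varphi(z)=|z|^{\delta}\tp(t,x,z)\le C\,t\,|z|^{\delta-d-\alpha},\qquad |z|>M,
\end{equation*}
so
\begin{equation*}
\int_{|z|>M} K_\beta(z)\varphi(z)\ud z\le C'\,\beta\int_{M}^{\infty} s^{\beta+\delta-d-\alpha-1}\ud s=\frac{C'\,\beta\, M^{\beta+\delta-d-\alpha}}{d+\alpha-\delta-\beta},
\end{equation*}
which is $O(\beta)\to 0$ since $\delta<d+\alpha$. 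Letting $\beta\to 0^+$ and then $\varepsilon\to 0^+$ yields the claimed identity.

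The only nontrivial input is Theorem~\ref{t:eta2}, which both supplies the value $\varphi(0)=\Psi_t(x)$ and extends $\varphi$ continuously to all of $\Rd$; everything else is elementary analysis of the weight $K_\beta$ combined with the heat-kernel bound \eqref{eq:mainThmEstz}. The minor technical point to keep an eye on is uniform boundedness of $\varphi$ on the annulus $r\le|z|\le M$, which follows because $\tp(t,x,\cdot)$ is continuous on $\Rdz$ and $|z|^\delta$ is continuous on $\Rd$.
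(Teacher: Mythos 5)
Your proposal is correct and follows essentially the same approach as the paper: recognize $K_\beta(z)=\frac{\Gamma(d/2)}{2\pi^{d/2}}\beta|z|^{\beta-d}$ as an approximate identity concentrating at the origin, exploit the continuity (Theorem~\ref{t:eta2}) of $\varphi(z)=\tp(t,x,z)/h(z)$ with $\varphi(0)=\Psi_t(x)$, and use the heat-kernel bound \eqref{eq:mainThmEstz} to kill the tail. The paper's proof is a bit terser (it invokes the dominated convergence theorem for the full region $|z|>\varepsilon$ rather than splitting further into an annulus and an exterior), but the decomposition and ingredients are the same.
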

\begin{proof}
Let $x \in \Rdz$. For $z\in\Rdz$ and $\beta>0$, put $f_\beta(z) = \frac{\Gamma(d/2)}{2\pi^{d/2}} \beta |z|^{\beta-d}$. Let $\eps >0$ and $\beta \to 0$. We have 
\begin{align*}
\beta \int_{B(0,\eps)} |z|^{\beta-d} \ud z &= d|B(0,1)| \beta \int_0^\eps  r^{\beta-d} r^{d-1} \ud{r} = d|B(0,1)|\eps^\beta  \longrightarrow \frac{2 \pi^{d/2}}{\Gamma(d/2)}.
\end{align*}
Furthermore, by \eqref{eq:mainThmEstz} and the dominated convergence theorem for every $x \in \Rdz$ we get
\begin{align*}
\lim_{\beta \to 0^+} \beta \int_{|z|>\eps} \frac{\tp(t,x,z)}{h(z)} |z|^{\beta-d} \ud{z} = 0.
\end{align*}
Then, since $z \mapsto \tp(t,x,z)/h(z)$ has a continuous extension to $\Rd$ with the value $\Psi_t(x)$ at the origin, we get
\begin{align*}
\Psi_t(x) = \lim_{\beta \to 0^+} \int_{\Rd} \frac{\tp(t,x,z)}{h(z)} f_\beta(z) \ud{z}.  
\end{align*}
The result follows (in the case of $x=0$, the statement is trivial).
\end{proof}
\begin{lemma}\label{lem:intlimPsi}
For $t>0$ and $x \in \Rdz$,
\begin{align*}
\int_0^t \Psi_s(x) \ud{s} = \lim_{\beta \to 0^+} \frac{\Gamma(d/2)}{2\pi^{d/2}} \frac{\beta}{\kappa_{\delta+\beta}-\kappa_\delta} \left( |x|^{-(d-\delta -\alpha-\beta)} - \int_{\Rd}  \tp(t,x,z)|z|^{-(d-\delta -\alpha-\beta)} \ud{z}\right).
\end{align*}
\end{lemma}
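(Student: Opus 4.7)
The plan is to combine Lemma~\ref{lem:PsiAI}, which represents $\Psi_s(x)$ as a delta-type limit of $\tp(s,x,\cdot)$-averages of $|\cdot|^{\delta+\beta-d}$, with a pointwise identity for the Hardy operator $\widetilde{\PL}:=\Delta^{\al/2}+\kappa|x|^{-\al}$ applied to Riesz kernels. Set $\gamma:=d-\delta-\al-\beta$, which lies in $(0,d-\al)$ for small $\beta>0$, and write $G_\beta(z):=|z|^{-\gamma}$ and $H_\beta(z):=|z|^{-(\gamma+\al)}=|z|^{-(d-\delta-\beta)}$. The representation \eqref{h_beta} together with integration by parts and the calculation of $q_\gamma$ in \cite{MR3460023} give $\Delta^{\al/2}G_\beta=-\kappa_\gamma H_\beta$ pointwise on $\Rdz$; since $\kappa_\gamma=\kappa_{d-\al-\gamma}=\kappa_{\delta+\beta}$ and $\kappa=\kappa_\delta$, this is equivalent to
\[
\widetilde{\PL}G_\beta=(\kappa_\delta-\kappa_{\delta+\beta})H_\beta\quad\text{on }\Rdz.
\]

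The pivotal step, which I expect to be the main technical obstacle, is to promote this pointwise identity to the heat-kernel statement
\[
G_\beta(x)-\tp_tG_\beta(x)=(\kappa_{\delta+\beta}-\kappa_\delta)\int_0^t\tp_sH_\beta(x)\,\ud s,\quad t>0,\ x\in\Rdz.
\]
The estimate \eqref{eq:mainThmEstz} first guarantees that $\tp_sG_\beta(x)$ and $\tp_sH_\beta(x)$ are finite for $s>0$ (for $\beta>0$ small, the choice of exponents ensures integrability both near $z=0$ and at infinity). The free-semigroup analogue $G_\beta-P_tG_\beta=\kappa_{\delta+\beta}\int_0^tP_sH_\beta\,\ud s$ follows from writing $P_tG_\beta(x)=\int_t^\infty c_\gamma(v-t)^{(d-\al-\gamma)/\al}p_v(x)\,\ud v$ using \eqref{h_beta}, differentiating in $t$, and recognising the outcome as $-\kappa_\gamma P_tH_\beta(x)$ via the very integration by parts used in \cite{MR3460023} to compute $q_\gamma$. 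Inserting this into the Duhamel formula \eqref{eq:Df2} for $\tp_tG_\beta$ and using $qG_\beta=\kappa H_\beta$, both sides of the displayed identity are seen to satisfy the same linear Volterra equation in $t$ whose kernel is the Schr\"odinger perturbation kernel of $p$; uniqueness, equivalently convergence of the perturbation series \eqref{def_p_tilde} as formalised in \cite{MR2457489,MR3000465}, then yields equality. A fully parallel alternative is to verify the identity term by term on \eqref{def_p_tilde}.

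With the heat-kernel identity in hand, I divide by $\kappa_{\delta+\beta}-\kappa_\delta$, which is nonzero for small $\beta>0$ in both the subcritical case (strict monotonicity of $\beta\mapsto\kappa_\beta$ at $\delta$) and the critical case $\delta=(d-\al)/2$ (strict local maximum at $\delta$), and multiply by $\frac{\Gamma(d/2)}{2\pi^{d/2}}\beta$, obtaining
\[
\frac{\Gamma(d/2)}{2\pi^{d/2}}\frac{\beta}{\kappa_{\delta+\beta}-\kappa_\delta}\bigl(G_\beta(x)-\tp_tG_\beta(x)\bigr)=\int_0^t \frac{\Gamma(d/2)}{2\pi^{d/2}}\beta\,\tp_sH_\beta(x)\,\ud s.
\]
Lemma~\ref{lem:PsiAI} identifies the pointwise limit of the integrand as $\Psi_s(x)$, and it remains to pass the limit $\beta\to0^+$ through the time integral. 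Using the scaling $\tp(s,x,z)=s^{-d/\al}\tp(1,s^{-1/\al}x,s^{-1/\al}z)$ and the substitution $w=s^{-1/\al}z$ gives
\[
\frac{\Gamma(d/2)}{2\pi^{d/2}}\beta\,\tp_sH_\beta(x)=s^{-(d-\delta-\beta)/\al}\cdot\frac{\Gamma(d/2)}{2\pi^{d/2}}\beta\int_{\Rd}|w|^{\delta+\beta-d}\tp(1,s^{-1/\al}x,w)\,\ud w,
\]
and the splitting argument in the proof of Lemma~\ref{lem:PsiAI} combined with \eqref{eq:mainThmEstz} bounds the last factor uniformly in $\beta\in(0,\beta_0]$ by a fixed multiple of $\Psi_1(s^{-1/\al}x)$; by the self-similarity \eqref{ss form eta1}, the left-hand side is therefore dominated uniformly in $\beta$ by $\max(1,t^{\beta_0/\al})\Psi_s(x)$ on $s\in(0,t)$. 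Since \eqref{eq:u_est} implies $\int_0^t\Psi_s(x)\,\ud s<\infty$ for every $x\in\Rdz$, dominated convergence delivers $\int_0^t\Psi_s(x)\,\ud s$ as the limit of the right-hand side, which is the stated formula.
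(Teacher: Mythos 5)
Your overall plan matches the paper's in its two essential moves: an exact formula for $\int_0^t\tp_sH_\beta(x)\,\ud s$ in terms of $G_\beta(x)-\tp_tG_\beta(x)$, followed by an interchange of $\lim_{\beta\to0^+}$ with $\int_0^t\ud s$ via dominated convergence and Lemma~\ref{lem:PsiAI}. Where you diverge is that the paper simply invokes \cite[Theorem~3.1]{MR3933622} for the identity
\[
\int_0^t\int_{\Rd}\tp(s,x,z)\,|z|^{\beta+\delta-d}\,\ud z\,\ud s
=\frac{|x|^{-(d-\delta-\alpha-\beta)}-\int_{\Rd}\tp(t,x,z)|z|^{-(d-\delta-\alpha-\beta)}\ud z}{\kappa_{d-\delta-\alpha-\beta}-\kappa_\delta},
\]
and then uses the symmetry $\kappa_{d-\delta-\alpha-\beta}=\kappa_{\delta+\beta}$; you instead propose to re-derive this from the free-semigroup relation $G_\beta-P_tG_\beta=\kappa_{\delta+\beta}\int_0^tP_sH_\beta\ud s$ plus Duhamel and uniqueness for the Volterra equation. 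That is a legitimate alternative route, at the cost of re-proving an available lemma and of having to make the Volterra/series argument rigorous. For the domination step, the paper bounds $\beta\int_{\Rd}\tp(s,x,z)|z|^{\beta+\delta-d}\,\ud z$ directly, uniformly in $s\in(0,t)$ and $\beta\in(0,d-\delta)$, by a constant $c'H(x)(1+|x|^{-d-\alpha})$ (via \eqref{eq:oppt} and \eqref{eq:mainThmEstz2}), and uses this constant as the dominant.

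The genuine gap is in your domination claim. You assert that
\[
\frac{\Gamma(d/2)}{2\pi^{d/2}}\,\beta\int_{\Rd}|w|^{\delta+\beta-d}\,\tp(1,u,w)\,\ud w\;\le\;c\,\Psi_1(u)
\]
uniformly in $\beta\in(0,\beta_0]$, with $u=s^{-1/\alpha}x$. This fails when $|u|$ is large, i.e.\ when $s$ is small and $x$ is fixed. Indeed, by \eqref{eq:mainThmEstz2} the contribution to the integral from the region $|w|\asymp|u|$ is comparable to $\beta\,|u|^{\delta+\beta-d}\cdot H(u)\int_{|w|\asymp|u|}p(1,u,w)\ud w\gtrsim\beta\,|u|^{\delta+\beta-d}$, whereas $\Psi_1(u)\approx H(u)\,(1\wedge|u|^{-d-\alpha})\approx|u|^{-d-\alpha}$ for large $|u|$; the ratio $\beta\,|u|^{\delta+\beta+\alpha}$ blows up. So the integrand in your $\ud s$ integral is not dominated by $c\,\Psi_s(x)$ near $s=0$, and the dominated convergence step as written is not justified. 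The conclusion is nevertheless salvageable: after multiplying back by $s^{-(d-\delta-\beta)/\alpha}$, the offending term becomes $\beta\,|x|^{\delta+\beta-d}$, bounded on $(0,t)$ uniformly in $\beta\in(0,\beta_0]$, and together with the other terms one obtains a constant dominant. This is essentially the paper's estimate; you should replace the intended $\Psi_1(u)$ bound by that direct constant bound (or by the paper's bound $c'H(x)(1+|x|^{-d-\alpha})$ without scaling at all).
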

\begin{proof}
Fix $x \in \Rdz$ and $t>0$. By \eqref{eq:oppt}, there is a constant $c=c(d,\alpha)$ such that
\begin{align*}
&\int_{\Rd} p(s,x,z) |z|^{\gamma-d} \ud{z} \le \int_{|z|>|x|/2} p(s,x,z) (|x|/2)^{\gamma-d} \ud{z} + c\int_{|z|\le|x|/2} \frac{s}{|x|^{d+\alpha}} |z|^{\gamma-d} 
 \ud{z} \\
&\le (|x|/2)^{\gamma-d} + \frac{c}{\gamma} t|x|^{\gamma-d-\alpha} \le c \frac{t + 1}{\gamma} \left(1+ |x|^{-d-\alpha}\right), \quad s \in (0,t),\; \gamma \in (0,d).
\end{align*}
Then, by \eqref{eq:mainThmEstz2}, there is $c'=c'(d,\alpha,t)$ such that
\begin{align*}
\int_{\Rd} \beta \tp(s,x,z) |z|^{\beta+\delta-d} \ud{z} &\le c' H(x) \beta \int_{\Rd} p(s,x,z) (|z|^{\beta+\delta-d} + |z|^{\beta-d}) \ud{z} \\
&\le c'H(x)  \left(1 + |x|^{-d-\alpha} \right), \quad \beta \in (0,d-\delta). 
\end{align*}
Therefore, by Lemma \ref{lem:PsiAI}, the dominated convergence theorem and \cite[Theorem~3.1]{MR3933622},
\begin{align}
&\int_0^t \Psi_s(x) \ud{s} = \lim_{\beta \to 0^+}\frac{\Gamma(d/2)}{2\pi^{d/2}}\int_0^t  \int_{\Rd} \beta  \tp(s,x,z) |z|^{\beta+\delta-d} \ud{z} \ud{s} \notag\\
&= \lim_{\beta \to 0^+} \frac{\Gamma(d/2)}{2\pi^{d/2}} \frac{\beta}{\kappa_{d-\delta -\alpha-\beta}-\kappa_\delta} \left( |x|^{-(d-\delta -\alpha-\beta)} - \int_{\Rd}  \tp(t,x,z)|z|^{-(d-\delta -\alpha-\beta)} \ud{z}\right).
\label{eq:intu}
\end{align}
The result follows from the symmetry of $\kappa$.
\end{proof}

\begin{corollary}\label{cor:intPsi}
For $0 \le \delta< \frac{d-\alpha}{2}$, we have
\begin{align}\label{eq:intu2}
\int_0^t \Psi_s(x) \ud{s} = \frac{\Gamma(d/2)}{2\pi^{d/2}\kappa'_\delta} \left( |x|^{-(d-\delta -\alpha)} - \int_{\Rd}  \tp(t,x,z)|z|^{-(d-\delta -\alpha)} \ud{z}\right), \quad t>0,\, x\in\Rdz.
\end{align}
For $\delta= \frac{d-\alpha}{2}$, 
\begin{align}\label{e.pcc}
 \int_0^t \Psi_s(x)\ud{s} = \frac{\Gamma(d/2)}{ \pi^{d/2}\kappa_\delta''}  \left(|x|^{-\delta}\ln|x| - \int_{\R^d}  \tilde{p}(t,z,x) |z|^{-\delta}\ln|z| \ud{z}  \right), \quad t>0,\, x\in\Rdz.
\end{align}
Here, $\kappa_\delta'$ and $\kappa_\delta''$ are the first and the second derivatives of $\kappa_\delta$, respectively.
\end{corollary}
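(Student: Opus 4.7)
The starting point is the identity from Lemma~\ref{lem:intlimPsi},
\begin{equation*}
\int_0^t \Psi_s(x)\ud s = \lim_{\beta \to 0^+} \frac{\Gamma(d/2)}{2\pi^{d/2}}\, \frac{\beta}{\kappa_{\delta+\beta}-\kappa_\delta}\, F(\beta;t,x),
\end{equation*}
where $F(\beta;t,x) := |x|^{-(d-\delta-\alpha-\beta)} - \int_{\R^d}\tp(t,x,z)|z|^{-(d-\delta-\alpha-\beta)}\ud z$. The plan is to perform a Taylor expansion of the ratio of $\beta$ to $\kappa_{\delta+\beta}-\kappa_\delta$ around $\beta=0$ and combine it with a corresponding expansion of $F(\beta;t,x)$. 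The two cases differ in the order at which the expansion has to be carried out.

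\medskip

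\emph{Subcritical case} $\delta<(d-\alpha)/2$. Here the map $\beta\mapsto\kappa_\beta$ is smooth at $\delta$ with $\kappa'_\delta \neq 0$, so $\beta/(\kappa_{\delta+\beta}-\kappa_\delta) \to 1/\kappa'_\delta$ as $\beta\to 0^+$. Moreover $F(\beta;t,x)\to F(0;t,x)$, which is exactly the bracketed expression in \eqref{eq:intu2}; the passage to the limit inside the integral is straightforward from \eqref{eq:mainThmEstz2} and the fact that $|z|^{-(d-\delta-\alpha-\beta)}$ is dominated by an integrable majorant for $\beta$ close to $0$ (use $|z|^{-(d-\delta-\alpha)}\vee |z|^{-(d-\delta-\alpha)+\veps}$ as in the proof of Lemma~\ref{lem:intlimPsi}). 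Combining these limits yields \eqref{eq:intu2}.

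\medskip

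\emph{Critical case} $\delta=(d-\alpha)/2$. Now $\delta$ is the maximum of $\kappa_\beta$, so $\kappa'_\delta=0$ and the prefactor $\beta/(\kappa_{\delta+\beta}-\kappa_\delta)\sim 2/(\kappa''_\delta\beta)$ blows up. The saving feature is that in this case $d-\delta-\alpha=\delta$, hence $F(0;t,x)=|x|^{-\delta}-\tp_t(|\cdot|^{-\delta})(x)=0$ by the invariance identity \eqref{eq:2}. So we need the next term in $F$. Writing
\begin{equation*}
F(\beta;t,x)= |x|^{-\delta+\beta} - \int_{\R^d}\tp(t,x,z)|z|^{-\delta+\beta}\ud z,
\end{equation*}
I intend to show that $F$ is differentiable in $\beta$ at $0$ (from the right) with
\begin{equation*}
\partial_\beta F(0;t,x) = |x|^{-\delta}\ln|x| - \int_{\R^d}\tp(t,x,z)|z|^{-\delta}\ln|z|\ud z.
\end{equation*}
Differentiating under the integral sign is justified by \eqref{eq:mainThmEstz2}: for $\beta$ in a small neighbourhood of $0$, the integrand is dominated by $c\,\tp(t,x,z)(|z|^{-\delta-\veps}+|z|^{-\delta+\veps})(1+|\!\ln|z|\!|)$, and after inserting the estimate of $\tp$ and splitting $|z|\le 1$ versus $|z|>1$, one obtains an integrable bound uniformly in $\beta$ (small). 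Since $F(0;t,x)=0$, a first-order Taylor expansion gives $F(\beta;t,x)=\beta\,\partial_\beta F(0;t,x)+o(\beta)$, while the prefactor satisfies $\beta/(\kappa_{\delta+\beta}-\kappa_\delta)=2/(\kappa''_\delta\beta)+o(1/\beta)$. Multiplying the two expansions the $\beta$-dependence cancels, and the product converges to $(2/\kappa''_\delta)\partial_\beta F(0;t,x)$. Plugging this into the statement of Lemma~\ref{lem:intlimPsi} gives \eqref{e.pcc}.

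\medskip

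The main obstacle is the critical case, specifically the uniform integrability needed to differentiate through the integral at $\beta=0$. The singularity $|z|^{-\delta+\beta}\ln|z|$ has an extra logarithmic factor compared to the proof of Lemma~\ref{lem:intlimPsi}, but since the exponent $-\delta=-(d-\alpha)/2$ is strictly larger than $-(d-\alpha)$, we have room to absorb $|\!\ln|z|\!|$ into a slightly worse power at $0$, and the decay of $\tp(t,x,\cdot)$ at infinity (via \eqref{eq:mainThmEstz}) absorbs the logarithm at infinity. Once this dominated-convergence step is carried out, the rest is Taylor expansion.
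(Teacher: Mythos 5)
Your proposal is correct and follows essentially the same route as the paper's own proof: both cases start from Lemma~\ref{lem:intlimPsi}, the subcritical case is a direct passage to the limit, and in the critical case you observe via the invariance identity \eqref{eq:2} that $F(0;t,x)=0$, then take a first-order Taylor expansion of $F$ together with the second-order Taylor expansion of $\kappa$ at $\delta$. The paper phrases this as multiplying and dividing by $\beta$ so that the bracket becomes a difference quotient converging to $\partial_\beta F(0;t,x)$, with the prefactor $\beta^2/(\kappa_{\delta+\beta}-\kappa^*)\to 2/\kappa''_\delta$, but this is the same computation; your dominated-convergence justification via \eqref{eq:mainThmEstz2} matches the paper's appeal to \eqref{eq:mainThmEstz}.
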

\begin{proof}
For  $0 \le \delta< \frac{d-\alpha}{2}$ the statement follows directly from Lemma \ref{lem:intlimPsi}, \eqref{eq:mainThmEstz} and the dominated convergence theorem. For $\delta= \frac{d-\alpha}{2}$, we have $\kappa_\delta' =0$ and
\begin{align}\label{eq:invfun}
\int_{\R^d}  \tilde{p}(t,x,z) |z|^{\delta+\alpha -d} \ud{z} = \int_{\R^d}  \tilde{p}(t,x,z) |z|^{-\delta} \ud{z} = |x|^{-\delta} = |x|^{\delta+\alpha-d},
\end{align}
see \rf{eq:2}, so the difference in the parentheses on the right hand side of \rf{eq:intu} tends to $0$.
So, as $\beta \to 0$, we use \eqref{eq:invfun}, \eqref{eq:mainThmEstz} and the dominated convergence theorem, obtaining
\begin{align*}
 \int_0^t \Psi_s(x)\ud{s} &= \lim_{\beta \to 0^+} \frac{\Gamma(d/2)}{2\pi^{d/2}} \frac{\beta^2}{\kappa_{\delta+\beta}-\kappa^*} \left(\frac{|x|^{\beta-\delta} - |x|^{-\delta} }{\beta}- \int_{\R^d}  \tilde{p}(t,x,z) \frac{|z|^{\beta-\delta} - |z|^{-\delta}}{\beta} \ud{z} \right) \\
&= \frac{\Gamma(d/2)}{ \pi^{d/2}\kappa_\delta''}  \left(|x|^{-\delta}\ln|x| - \int_{\R^d}  \tilde{p}(t,x,z) |z|^{-\delta}\ln|z| \ud{z}  \right).
\end{align*}
\end{proof}

\begin{theorem}\label{thm:intu}
For $\delta \in [0,\frac{d-\alpha}{2})$ and $x\in \Rd$ we have
\begin{align}\label{Eq:intu2}
 \int_0^\infty \Psi_s(x)\ud{s} = \frac{\Gamma(d/2)}{2 \pi^{d/2}\kappa_\delta'}  |x|^{\delta+\alpha-d},
\end{align}
and 
\begin{align}\label{Eq:intu}
 \int_t^\infty \Psi_s(x)\ud{s} = \frac{\Gamma(d/2)}{2 \pi^{d/2}\kappa_\delta'}   \int_{\R^d} \tilde{p}(t,x,z)|z|^{\delta+\alpha -d}  \ud z, \quad t>0.
\end{align}
For $\delta=(d-\alpha)/2$, 
\begin{align}\label{Eq:intuk}
 \int_t^\infty \Psi_s(x)\ud{s} = \infty, \quad x\in \Rd, \quad t\ge 0.
\end{align}
\end{theorem}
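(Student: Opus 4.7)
The plan is to leverage Corollary~\ref{cor:intPsi} together with the scaling relation \eqref{ss form} to show that the integral
\[
I(t,x):=\int_{\R^d}\tilde p(t,x,z)|z|^{\delta+\alpha-d}\,\ud z
\]
vanishes as $t\to\infty$ when $\delta<(d-\alpha)/2$. First I would fix $x\in \Rdz$; the case $x=0$ is trivial for $\delta>0$ (both sides are infinite since $\Psi_s(0)=\infty$) and for $\delta=0$ reduces to the Riesz potential identity \eqref{eq:pot}. Using \eqref{ss form} and substituting $u=t^{-1/\alpha}z$,
\[
I(t,x)=t^{(\delta+\alpha-d)/\alpha}\int_{\R^d}\tilde p(1,t^{-1/\alpha}x,u)|u|^{\delta+\alpha-d}\,\ud u .
\]
Factoring $\tilde p(1,y,u)=h(y)\rho_1(y,u)h(u)=|y|^{-\delta}\rho_1(y,u)|u|^{-\delta}$ and applying the upper bound $\rho_1(y,u)\le c(1+|u|)^{-d-\alpha+\delta}$ valid for $|y|$ bounded (from \eqref{eq:mainThmEstrho2}), the remaining integral $\int_{\R^d}(1+|u|)^{-d-\alpha+\delta}|u|^{\alpha-d}\,\ud u$ is finite (near $0$ this is $\approx |u|^{\alpha-d}$, which is integrable since $\alpha<d$; near $\infty$ this is $\approx |u|^{\delta-2d}$, integrable since $\delta<d$). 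Consequently,
\[
I(t,x)\le C\, t^{(2\delta+\alpha-d)/\alpha}|x|^{-\delta},
\]
and the exponent is strictly negative precisely because $\delta<(d-\alpha)/2$, so $I(t,x)\to 0$ as $t\to\infty$.

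With this in hand, \eqref{Eq:intu2} follows from \eqref{eq:intu2} by letting $t\to\infty$ using monotone convergence on the left, and \eqref{Eq:intu} follows immediately by subtracting \eqref{eq:intu2} from \eqref{Eq:intu2}.

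For the critical case $\delta=(d-\alpha)/2$, I would use the self-similarity \eqref{ss form eta1} together with the identity $\Psi_1(y)=\rho_1(0,y)h(y)=\rho_1(0,y)|y|^{-\delta}$ and the continuity of $\rho_1(0,\cdot)$ at the origin established in Lemma~\ref{Lemma_rho_lim_0}, giving $\rho_1(0,0)\in(0,\infty)$. For $x\in\Rdz$ this yields
\[
\Psi_s(x)=s^{(\delta-d)/\alpha}\Psi_1(s^{-1/\alpha}x)\sim \rho_1(0,0)\, s^{(2\delta-d)/\alpha}|x|^{-\delta}=\rho_1(0,0)\, s^{-1}|x|^{-\delta}\quad\text{as } s\to\infty,
\]
since $2\delta-d=-\alpha$ in the critical case. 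Hence $\int_t^\infty\Psi_s(x)\,\ud s$ diverges logarithmically, and the case $x=0$ is trivial since $\Psi_s(0)=\infty$ for every $s>0$.

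The main technical obstacle is the vanishing of $I(t,x)$ as $t\to\infty$. It rests entirely on the sharp two-sided comparison \eqref{eq:mainThmEstrho2} for $\rho_1$: without its explicit $(1+|u|)^{-d-\alpha+\delta}$ decay at infinity together with the $|y|$-uniformity for bounded $y$, neither the integrability of the auxiliary integral nor the critical exponent $(2\delta+\alpha-d)/\alpha$ that isolates the subcritical regime $\delta<(d-\alpha)/2$ would be visible. Everything else in the argument is a routine consequence of scaling and the corollary.
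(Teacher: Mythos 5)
Your proof is correct, and for the key limit $\int_{\Rd}\tilde p(t,x,z)|z|^{\delta+\alpha-d}\,\ud z\to 0$ as $t\to\infty$ (for $x\in\Rdz$ and $\delta<(d-\alpha)/2$) it takes a genuinely different route from the paper's. The paper observes, via \eqref{eq:intu2}, that $t\mapsto\int_{\Rd}\tp(t,x,z)|z|^{\delta+\alpha-d}\ud z$ is decreasing; it then splits $\Rd$ into $B(0,R)$ and $B(0,R)^c$, makes the tail integral small by monotonicity, and on $B(0,R)$ builds a $t$-independent dominating function from \eqref{eq:mainThmEstz} so as to apply the dominated convergence theorem --- a qualitative argument that yields no rate. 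Your version instead pulls the $t$-dependence out through the scaling \eqref{ss form}, factors $\tilde p(1,y,u)=h(y)\rho_1(y,u)h(u)$, and bounds the rescaled integral uniformly via \eqref{eq:mainThmEstrho2}; this produces an explicit decay rate $\int_{\Rd}\tilde p(t,x,z)|z|^{\delta+\alpha-d}\ud z\le C\,t^{(2\delta+\alpha-d)/\alpha}|x|^{-\delta}$ and makes visible exactly why the subcriticality $\delta<(d-\alpha)/2$ is what is needed. It is arguably sharper and structurally cleaner, at the small cost of invoking the regularity of $\rho_1$ near the origin (which, however, is already presupposed by Corollary~\ref{cor:intPsi}). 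For the critical case \eqref{Eq:intuk} the paper cites \eqref{eq:u_est} directly, while you derive the finer asymptotics $\Psi_s(x)\sim\rho_1(0,0)\,s^{-1}|x|^{-\delta}$ from the continuity of $\rho_1(0,\cdot)$; both yield logarithmic divergence, so the substance is the same. The case $x=0$ is dispatched trivially in both arguments. In short: same skeleton (start from Corollary~\ref{cor:intPsi}, pass to the limit in $t$, subtract), but your mechanism for the limit is a scaling-and-$\rho_1$-estimate argument rather than the paper's $\eps$--$R$ split with dominated convergence, and it buys an explicit rate.
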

\begin{proof}
We shall prove \eqref{Eq:intu2} by letting $t \to \infty$ in \eqref{eq:intu2}. To this end let $x \in \Rdz$ and $T>1$. By \eqref{eq:intu2}, $\int_{\R^d}  \tilde{p}(t,x,z)|z|^{\delta+\alpha -d} \ud z $ is finite and decreases 
as $t$ increases to $\infty$. Hence, for every $\eps>0$ there is $R>0$ such that for every $t>T$,  
\begin{align}\label{eq:ptIntBr}
\int_{B(0,R)^c}  \tilde{p}(t,x,z) |z|^{\delta+\alpha -d} \ud z \le \int_{B(0,R)^c}  \tilde{p}(T,x,z) |z|^{\delta+\alpha -d} \ud z  < \eps. 
\end{align}
By  \eqref{eq:mainThmEstz}, for $t>T$ we get
\begin{align*}
|z|^{\delta+\alpha -d} \tilde{p}(t,x,z) &\le c |z|^{\delta+ \alpha -d} (1+t^{\delta/\alpha}|z|^{-\delta})(1+t^{\delta/\alpha}|x|^{-\delta}) t^{-d/\alpha} \\
&\le c |z|^{\delta+ \alpha -d} (1+|z|^{-\delta})(1+|x|^{-\delta}) T^{-(d-2\delta)/\alpha}.
\end{align*}
By the dominated convergence theorem,
\begin{align*}
\lim_{t\to\infty} \int_{B(0,R)}  \tilde{p}(t,x,z) |z|^{\delta+\alpha -d} \ud z  =0.
\end{align*}
This and \eqref{eq:ptIntBr} yield \eqref{Eq:intu2}.  Then \eqref{Eq:intu} follows by \eqref{eq:intu2} and \eqref{Eq:intu2}. If $x=0$, then we trivially have infinity on both sides of \eqref{Eq:intu2} and \eqref{Eq:intu}.
Finally, \eqref{Eq:intuk} follows from \eqref{eq:u_est}.
\end{proof}

\begin{remark}\label{r.asss}
We note that the function
\begin{align}
\mu_t(x) &:= \int_0^t \Psi_s(x)\ud s\label{e.dmutx}
\end{align}
is self-similar, too. Namely, by \eqref{ss form eta1} and changing variables $s=tu$ in \rf{e.dmutx}, we get
\begin{align*}
\mu_t(x)=t^{(\alpha+\delta-d)/\alpha}\mu_1(t^{-1/\alpha}x),\quad t>0, x\in \Rd.
\end{align*}
Furthermore, $\mu$ satisfies the Duhamel formula, 
\begin{align*}
\mu_t(x) = \int_0^t\int_{\Rd} \mu_s(z) q(z) p(t-s,x,z) \ud{z}\ud{s}.
\end{align*}
Indeed, by \eqref{e.DfPsi} and Fubini-Tonelli,
\begin{align*}
\int_0^t \int_{\RR^d} p(t-s,x,z)q(z)\mu_s(z) \ud{z} \ud{s} &= \int_0^t \int_{\RR^d} p(t-s,x,z)q(z)\int_0^s\Psi_{s-r}(z) \ud{r} \ud{z} \ud{s} \\
&= \int_0^t \int_r^t\int_{\RR^d} p(t-s,x,z)q(z)\Psi_{s-r}(z) \ud{s} \ud{z} \ud{r} \\
&= \int_0^t \int_0^{t-r}\int_{\RR^d} p(t-r-s,x,z)q(z)\Psi_s(z) \ud{s} \ud{z} \ud{r} \\
&= \int_0^t \Psi_{t-r}(z)  \ud{r} = \mu_t(x). \\
\end{align*}
Of course, it is $\Psi_t$, not $\mu_t$, that captures the large time asymptotics for the solutions of the equation \eqref{equation} in Theorem~\ref{lim norm th}. Interestingly, it seems feasible, if not easy, to construct $\mu_t$ directly as $\lim_{y\to0} \int_0^t \tp(s,x,y)/h(y)\ud{s}$ and then attempt \textit{to define} $\Psi_t(x)=\partial \mu_t(x)/\partial t$.  
\end{remark}

\end{document}